\documentclass[11pt]{amsart}
\usepackage{amsmath,amssymb,a4wide,scalerel}
\usepackage{mathabx}
\usepackage{color}
\usepackage{xcolor,graphicx}
\usepackage{mathrsfs}
\usepackage{tikz,pgfplots}
\usepackage{subcaption}
\usepackage[normalem]{ulem}
\usepackage{hyperref}
\usepackage{dsfont}
\usepackage{comment}
\usepackage{enumitem}
\usepackage{ctable} 
\usepackage[appendix=append,bibliography=common]{apxproof} 

\newtheorem{theorem}{Theorem}

\newtheorem{proposition}[theorem]{Proposition}
\newtheorem{remark}[theorem]{Remark}

\newtheorem{assumption}{Assumption}

\newcommand{\R}{{\mathbb R}}
\newcommand{\IC}{\mathbb{C}}
\newcommand{\N}{{\mathbb N}}
\newcommand{\E}{{\mathbb E}}

\newcommand{\ii}{\ensuremath{\mathrm{i}}}

\newcommand{\IS}{\mathbb{S}}
\newcommand{\tr}{{\operatorname{Tr}\, }}
\newcommand{\Id}{\text I}
\newcommand{\cE}{\mathcal E}
\newcommand{\cM}{\mathcal M}
\newcommand{\HS}{\mathrm{HS}}
\newcommand{\abs}[1]{\lvert #1 \rvert}
\providecommand{\norm}[1]{\left\lVert#1\right\rVert}
\newcommand*\dd{\mathop{}\!\mathrm{d}}
\newcommand{\expp}[1]{\exp\left(#1\right)}

\renewcommand{\Re}{{\operatorname{Re}}}

\newcommand{\realSH}{{Y}}
\newcommand{\mean}{\mathrm{m}}

\newcommand{\varlm}{v_{\ell,m}}

\newcommand{\std}{{v}}
\newcommand{\Deltalb}{\Delta_{\IS^2}}
\newcommand{\nablalb}{\nabla_{\IS^{2}}}
\newcommand{\bPhi}{\mathbf{\Phi}}
\DeclareMathOperator{\Var}{Var}
\newcommand{\cP}{\mathcal{P}}

\DeclareMathOperator{\spn}{span}

\newif\ifapx
\apxfalse

\usepackage{color}

\hypersetup{hidelinks}

\author{David Cohen}
              \address{Department of Mathematical Sciences,
              Chalmers University of Technology and University of Gothenburg, 41296~Gothenburg, Sweden}
              \email{\tt david.cohen@chalmers.se}

\author{Bj\"orn M\"uller}
              \address{Department of Mathematical Sciences,
              Chalmers University of Technology and University of Gothenburg, 41296~Gothenburg, Sweden}
              \email{\tt bjornmul@chalmers.se}

\author{Andrea Papini}
              \address{Department of Mathematical Sciences,
              Chalmers University of Technology and University of Gothenburg, 41296~Gothenburg, Sweden}
              \email{\tt andreapa@chalmers.se}

\begin{document}

\title[Long-time behavior of stochastic evolution equations on the sphere]{Long-time behavior of exact and numerical solutions of stochastic evolution equations on the sphere}

\begin{abstract}
	We investigate the long-time behavior of exact solutions and numerical approximations of linear stochastic evolution equations defined on the sphere.
	We focus on three classical models arising in mathematical physics: the stochastic wave equation, the stochastic Schrödinger equation, and the stochastic Maxwell's equations.
	For these SPDEs, we analyze several widely used time integrators with respect to trace formulas describing the evolution of physically relevant quantities such as energy, mass, and momentum dependent on the forcing term. In particular, we prove that the forward and backward Euler--Maruyama schemes fail to reproduce the correct long-time behavior of the exact solutions. In addition, we prove that the stochastic exponential integrator preserves the correct long-time behavior of the physical quantities of interest.
	Finally, several numerical experiments are provided to illustrate our theoretical findings.
\end{abstract}

\maketitle
{\small\noindent
{\bf AMS Classification.} 35Q99, 60H10, 60H15, 60H35, 65C20, 65C30.

\bigskip\noindent{\bf Keywords.} stochastic PDEs on manifolds, sphere, stochastic wave equation, stochastic Schrödinger equation, stochastic Maxwell's equations, conservation laws, long-time behavior, trace formulas, geometric numerical integration, trigonometric integrators, exponential integrators, Euler--Maruyama methods.}

\section{Introduction}\label{sec-intro}

The analysis of the long-time behavior of numerical schemes for stochastic differential equations (SDEs) started with the paper \cite{MR2083326} (to the best of our knowledge). In this work, the authors gave precise results on the long-time behavior of the forward, backward, and partitioned Euler--Maruyama schemes when applied to a linear stochastic oscillator.
This paper was then followed by the works \cite{MR2922170,MR2909913,MR3831957,MR3608313,MR2333534,MR3923632,MR3348201,MR3906838,MR2312503,math14010017,MR3702519}
on the long-time behavior of numerical methods for linear stochastic oscillators. Furthermore, results on the long-time numerical simulation
of (nonlinear) Hamiltonian and Poisson SDEs driven by additive noise are given in \cite{MR2926251,MR3172331,MR4077238,MR4375496,MR3579605}.

This research has naturally been extended to the infinite-dimensional setting. For example,
the following references on the long-time behavior of numerical methods for stochastic partial differential equations (SPDEs) defined on Euclidean domains are of interest to the present paper. The papers \cite{MR3484400,MR4285774,MR3033008,MR2384109,MR2379913,MR3012932} investigate the long-time simulations of stochastic wave equations. The works \cite{akrivisFiniteDifferenceDiscretization1993,antoineComputationalMethodsDynamics2013,MR3771721,besseRelaxationSchemeNonlinear2004,besseEnergypreservingMethodsNonlinear2021,cohenOnestageExponentialIntegrators2012,MR3007549,sanz-sernaConerservativeNonconservativeSchemes1986} are concerned with stochastic Schr\"odinger equations. Finally,
the papers \cite{MR3432362,MR4077824,MR4410964,MR4739347} deals with stochastic Maxwell's equations.

Moving beyond the traditional Euclidean framework, the numerical analysis of SPDEs defined on Riemannian manifolds,
such as the unit sphere, has recently gained traction in the literature, see for example \cite{MR4161976,MR2370085,Elworthy_1982,MR1246744,MR4059183}. We refer to  \cite{MR4714764,Anh2018,cohen2026fullydiscreteapproximationsemilinear,MR4462619,MR3907363,MR4701212,MR4935789,papi2025,Gia2019} for finite-time convergence results
of numerical methods for SPDEs on the sphere. 
In particular, the stochastic wave and Schrödinger equations on the sphere have been considered in \cite{MR4942808,cohen2026fullydiscreteapproximationsemilinear,MR4462619}.
To the best of our knowledge, nothing is known on the long-time behavior
of (numerical) solutions to SPDEs on the sphere.

The aim of the present paper is to fill this gap by investigating the long-time behavior of numerical methods when applied to some classical
linear stochastic evolution equations on the sphere,
namely the stochastic wave equation, the stochastic Schr\"odinger equation, and the stochastic Maxwell's equations.
On the one hand, we prove that certain widely-used numerical methods (forward and backward Euler--Maruyama schemes) fail to capture the correct long-time behavior of the problem, with respect to trace formulas of physical quantities of interest, such as energy. On the other hand, we prove that a class of stochastic exponential integrators has the exact same long-time behavior, with respect to these trace formulas, as exact solutions to the considered equations.

The paper is organized as follows: Section~\ref{sect-setting} gives the setting needed to define
the considered linear SPDEs on the sphere as well as their spatial and temporal discretizations.
Section~\ref{sect-swe} studies the long-time behavior of numerical solutions of the stochastic wave equation on the sphere in terms of
the energy of the system. In Section~\ref{sect-sch}, we investigate the long-time behavior, with respect to energy, mass and momentum,
of the stochastic Schr\"odinger equation on the sphere and its numerical discretizations. Finally, the long-time behavior of the energy of the stochastic Maxwell's equations on the sphere for the transverse electric mode is considered in Section~\ref{sect-maxeq}.
Each section's results are illustrated by numerical simulations. The code used for these simulations is available at \href{https://github.com/muellerbjoern/TraceSphere}{https://github.com/muellerbjoern/TraceSphere}.
The proofs for Sections~\ref{sect-sch}~and~\ref{sect-maxeq} can be found in Appendix~\ref{apx:Schrödinger}~and~\ref{apx:Maxwell}.

\section{Setting and three SPDEs on the sphere}\label{sect-setting}

In this section, we present the following three SPDEs on the sphere
(details on the notation are given below)
\begin{itemize}
\item the stochastic wave equation on the sphere 
$\partial_{tt}u(t)-\Deltalb u(t)=\dot{L}(t)$;
\item the free stochastic Schr\"odinger equation
on the sphere $\ii\partial_tu(t)=\Deltalb u(t)+\dot L(t)$;
\item the time-dependent stochastic Maxwell's equations in vacuum on the sphere
\begin{align*}
	\partial_t \begin{pmatrix}
		E(t) \\ H(t)
	\end{pmatrix} = \begin{pmatrix}
		\varepsilon^{-1}\nablalb \times H(t) \\ -\mu^{-1}\nablalb \times E(t)
	\end{pmatrix} + \begin{pmatrix} \dot{L}_E(t) \\ \dot{L}_H(t)\end{pmatrix},
\end{align*}
\end{itemize}
as well as their numerical discretizations. To do so, we first give
the setting and the definition of the L\'evy noises $L, L_E$ and $L_H$. 
We then 
define an abstract stochastic evolution equation that will encompass these three SPDEs on the sphere. After that, we provide a spectral discretization of this abstract stochastic evolution equation. Finally, we present three time integrators for this abstract problem.

\subsection{L\'evy noise on the sphere}
In this subsection, we recall some notation and the definition of the L\'evy noise on the sphere, primarily from \cite{MR4462619,papi2025,MR3404631}.

Let $(\Omega, \mathcal{F}, (\mathcal{F}_t)_{t\geq 0},\mathbb{P})$ denote a complete filtered probability space. Let $d\geq 3$ and consider the unit sphere $\mathbb S^{d-1}=\{x\in\R^d, \|x\|_{\R^d}=1\}\subset \R^d$, where $\|\cdot\|_{\R^d}$ denotes the Euclidean norm. 
   
For ease of presentation, throughout the paper 
we will work with $d-1=2$, i.\,e. the unit sphere $\IS^2$. 
All computations for the stochastic wave and Schrödinger equations generalize to any arbitrary dimension $d\geq3$.

We thus consider on $\IS^2$ the geodesic metric given by $d(x,y)=\arccos(\langle x,y\rangle_{\R^3})$ for any $x,y\in\IS^2$. 
Furthermore, we identify the Cartesian coordinates $x\in\IS^2$ with the polar coordinates $(\vartheta,\varphi)\in[0,\pi]\times[0,2\pi)$ by the classical transformation $x=(\sin(\vartheta)\cos(\varphi),\sin(\vartheta)\sin(\varphi),\cos(\vartheta))$.

The Lebesgue measure on the sphere has the representation
\[
\dd x = \sin(\vartheta)\dd \vartheta \dd \varphi.\]
We denote by $\mathcal{B}(\IS^2)$ the associated Borel $\sigma$-algebra. Then, the space $L^2(\IS^2)=L^2(\IS^2,\R)$ is a Hilbert space with the inner product $\langle \cdot,\cdot \rangle_{L^2(\IS^2)}$ defined by
\[
\langle f,g \rangle=\langle f,g \rangle_{L^2(\IS^2)}=\int_{\IS^2} f(x)g(x) \dd x
\]
for any $f,g\in L^2(\IS^2)$. The induced norm is denoted by
$\norm{\cdot}=\norm{\cdot}_{L^2(\IS^2)}=\sqrt{\langle \cdot,\cdot \rangle_{L^2(\IS^2)}}$.

It is known that the (real-valued) spherical harmonics form an orthonormal basis of the Hilbert space $L^2(\IS^2)$, see \cite{marinucciRandomFieldsSphere2011a}.
The spherical harmonics are denoted $\mathcal{Y}= (Y_{\ell,m})_{\ell\in\N_0, m=-\ell,\ldots,\ell}$, where each function $Y_{\ell,m}\colon[0,\pi]\times[0,2\pi) \to \R$ is given by
\begin{equation}
    Y_{\ell,m}(\vartheta,\varphi) = 
    \begin{cases}
        \sqrt{2}(-1)^m \sqrt{\frac{2\ell+1}{4\pi}\frac{(\ell-|m|)!}{(\ell+|m|)!}} P_{\ell,|m|}(\cos(\vartheta))\sin(|m|\varphi), & \text{for } m<0 \\[3mm]
     \sqrt{\frac{2\ell+1}{4\pi}} P_{\ell,m}(\cos(\vartheta)), 
     & \text{for } m=0 \\[3mm]
         \sqrt{2}(-1)^m \sqrt{\frac{2\ell+1}{4\pi}\frac{(\ell-m)!}{(\ell+m)!}} P_{\ell,m}(\cos(\vartheta))\cos(m\varphi), & \text{for }  m>0.
    \end{cases}
\end{equation}
Above, $(P_{\ell,m})_{\ell\in\N_0, m=0,\ldots,\ell}$ are the Legendre polynomials, see for instance \cite{szeg1939orthogonal}.

Furthermore, the spherical harmonics are eigenfunctions of  the Laplace--Beltrami operator, see, e.\,g., \cite[Chapter 6]{byerly1893elementary}
defined by 
\begin{equation*}
    \Deltalb = (\sin(\vartheta))^{-1} \frac{\partial}{\partial \vartheta}\Big(\sin(\vartheta)\frac{\partial}{\partial \vartheta} \Big) +  (\sin(\vartheta))^{-2} \frac{\partial^2}{\partial \varphi^2}.    
\end{equation*}
That is, for all $\ell\in\N_0$, $m=-\ell,\ldots,\ell$, we have
\begin{equation*}
    \Deltalb Y_{\ell,m}
    = -\lambda_\ell Y_{\ell, m},
\end{equation*}
with eigenvalues $\lambda_\ell = \ell(\ell+1)$.
Note that by a linear basis transformation, the results for the complex-valued spherical harmonics given in \cite{marinucciRandomFieldsSphere2011a} are applicable for the real-valued case as well, see, e.\,g., \cite[Appendix A]{janssonNonstationaryGaussianRandom2024}.

The functional spaces, in which the solution to the SPDEs under investigation and the driving noise $L$ live,
are the Sobolev spaces $H^\eta(\IS^2)$ with a regularity index $\eta\in\R$ that we now present. For $\eta=0$, we set $H^0(\mathbb{S}^2)=L^2(\mathbb{S}^2)$. When the index $\eta\in\R_+$ is positive, we use the Bessel potentials to define these spaces as
\begin{equation*}
    H^\eta(\IS^2) = (\Id-\Deltalb)^{-\eta/2}L^2(\IS^2).
\end{equation*}
with the induced inner product
\begin{equation*}
    \langle f, g \rangle_{H^\eta(\IS^2)} = \langle (\Id-\Deltalb)^{\eta/2}f,(\Id-\Deltalb)^{\eta/2} g \rangle.
\end{equation*} 
For $\eta \in \R_+$, using the series expansion of functions in terms of spherical harmonic functions with coefficients in $L^2(\Omega, \R)$, we interpret the negative Sobolev space $H^{-\eta}(\IS^2)$ within the framework of a Gelfand triple. That is, we consider, for $\mathcal{H}=L^2(\IS^2)$ and $\eta\geq0$, the following embeddings
$$
V=H^\eta \hookrightarrow \mathcal{H}\simeq \mathcal{H}^*\hookrightarrow V^* = H^{-\eta},
$$
which give a proper definition of $H^{-\eta}(\IS^2)$. For further details on the introduced Sobolev spaces and the Bessel potentials, we refer to \cite{AnMan}.

For the coupling between space regularity and probability, we use the Lebesgue--Bochner spaces $L^p(\Omega;H^\eta(\IS^2))$, for $p\geq1, \eta\in \R$, equipped with the norm
$$
\|X\|_{L^p(\Omega;H^\eta(\IS^2))}=\mathbb{E}[\|X\|^p_{H^\eta(\IS^2)}]^{1/p},
$$
similarly to \cite{papi2025,MR3404631}.

Let us now introduce the driving L\'evy process $L=(L(t))_{t\geq 0}$ of the considered SPDEs. Following~\cite[Definition 4.1]{peszat2007}, within the same framework as in \cite{papi2025}, we assume that $L$ belongs to the class of square-integrable L\'evy  processes in $H^\eta(\IS^2)$, for $\eta\geq 0$, i.\,e., $L(t) \in L^2(\Omega,H^\eta(\IS^2))$. 
A L\'evy process $L$ is a stochastic process with stationary and independent increments, which is stochastically continuous and satisfies $L(0)=0$ almost surely. The process $L$ has a càdlàg modification, see for example \cite[Theorem 4.3]{peszat2007}.
Its mean $\E\left[L(t)\right] = t \mean \in H^\eta(\IS^2)$ and covariance operator $Q \in L^+_1(H^\eta(\IS^2))$ exist, see \cite[Theorem 4.4]{peszat2007} for details. Here,  $L^+_1(H^\eta(\IS^2))$ denotes the space of all linear, trace-class, symmetric, positive semi-definite operators from $H^\eta(\IS^2)$ into itself. For such operators, we define the Hilbert--Schmidt norm as
$$
\| Q \|_{HS(H^\eta(\IS^2))}=\left( \sum_{k=0}^\infty 
\| Qe_k\|^2_{H^\eta(\IS^2)} \right)^{1/2},
$$
where $\{e_k\}_{k=0}^\infty$ is an orthonormal basis of $H^\eta(\IS^2)$. 
For $\eta=0$, we use the notation $\norm{\cdot}_{HS}$.

In the present setting, we can give series expansions for the mean and covariance of the stochastic process $L$; we now provide some details on these quantities. Since $H^\eta(\IS^2) \subset L^2(\IS^2)$ for $\eta \ge 0$, we can write
the L\'evy process as the following expansion in $L^2({\IS^{2}})$,
 \begin{equation}\label{eq:KL-expansion_Levy}
	L(t) = \sum_{\ell=0}^\infty \sum_{m=-\ell}^\ell  L_{\ell,m}(t)Y_{\ell,m}
\end{equation}
with real-valued c\`{a}dl\`{a}g one--dimensional L\'evy processes $L_{\ell,m} = \langle L, Y_{\ell,m}\rangle$.
This series converges in~$L^2(\Omega,H^\eta(\IS^2))$.
The mean of $L$ is given by
\begin{equation*}
	t \, \mean = \E\left[L(t)\right]
		= \sum_{\ell=0}^\infty \sum_{m=-\ell}^\ell \E\left[L_{\ell,m}(t)\right] \realSH_{\ell,m}
		= t \sum_{\ell=0}^\infty \sum_{m=-\ell}^\ell \mean_{\ell,m} \realSH_{\ell,m}.
\end{equation*}

Applying the definition of the trace of an operator, $\tr Q = \sum_{\ell=0}^{\infty}\sum_{m=-\ell}^{\ell} \langle Q Y_{\ell , m}, Y_{\ell , m} \rangle$, and using the orthonormality of the spherical harmonics, we have,
for $t\geq 0$,
\begin{equation*}
	\begin{aligned}
		t \, \tr Q = &t \sum_{\ell=0}^{\infty}\sum_{m=-\ell}^{\ell} \langle Q Y_{\ell , m}, Y_{\ell , m}\rangle = t \sum_{\ell=0}^{\infty}\sum_{m=-\ell}^{\ell} \E\left[\langle L(1) - \mean, Y_{\ell , m}\rangle \langle L(1)-\mean, Y_{\ell , m}\rangle \right]\\
        = & t \sum_{\ell=0}^\infty \sum_{m=-\ell}^\ell \E\left[(L_{\ell,m}(1) - \mean_{\ell, m})^2\right]
		= \sum_{\ell=0}^\infty \sum_{m=-\ell}^\ell  \E\left[(L_{\ell,m}(t) - \E\left[L_{\ell,m}(t)\right])^2\right]\\
        = & \E\left[\norm{L(t) - \E\left[L(t)\right]}_{L^2(\IS^2)}^2\right],
	\end{aligned}
\end{equation*}
where in the second-to-last equality we have used the homogeneity of the L\'evy processes $L_{\ell,m}$, see \cite[Section~4.8]{peszat2007}.

Furthermore, we denote the standard deviation as
\begin{equation}\label{Lvlm}
    \std = \sum_{\ell=0}^\infty \sum_{m=-\ell}^\ell \sqrt{\varlm} \realSH_{\ell,m},\ \text{with }\std_{\ell,m}=\E\left[(L_{\ell,m}(1) - \E\left[L_{\ell,m}(1)\right])^2\right].
\end{equation}
We note that $\std$ is a function in~$H^\eta(\IS^2)$.

Following \cite{papi2025}, throughout the paper we will assume the following regularity for the driving noise of the considered SPDEs on the sphere.

\begin{assumption}\label{ass:StrongLevyProcess}
	The L\'evy process~$L$ is in $L^2(\Omega, H^\eta(\IS^2))$ for some $\eta \geq 0$ with series expansion~\eqref{eq:KL-expansion_Levy} converging in $L^2(\Omega, H^\eta(\IS^2))$, i.\,e.,
	\begin{equation*}
		\E\left[\|L(t)\|_{H^\eta(\IS^2)}^2\right]
		= t \sum_{\ell=0}^\infty \sum_{m=-\ell}^\ell      (1+\lambda_\ell)^\eta (\varlm + \mean_{\ell, m}^2)
		= t (\|\std\|_{H^\eta(\IS^2)}^2 + \|\mean\|_{H^\eta(\IS^2)}^2)
		<\infty.
	\end{equation*}
\end{assumption}

In order to perform the numerical experiments presented below, the following assumption, which is a specialization of Assumption~\ref{ass:StrongLevyProcess}, will be used.
\begin{assumption}\label{ass:SpecialLevyProcess}
	The L\'evy process~$L$ given by the expansion~\eqref{eq:KL-expansion_Levy} satisfies
	\begin{equation*}
		L_{\ell,m} = a_{\ell} \hat{L}_{\ell,m},
	\end{equation*}
	where $a_\ell \leq C \ell^{-\alpha/2}$ for all $\ell\in \N$, for some $\alpha > 0$, $a_0$ bounded, and $(\hat{L}_{\ell,m})_{\ell\in\N_0, m=-\ell,\ldots,\ell}$ is a sequence of identically distributed real-valued Lévy processes with mean $\E\left[\hat{L}_{\ell,m}(t)\right] = \hat{\mean}$ and variance $\E\left[\left(\hat{L}_{\ell,m}(t) - \E\left[\hat{L}_{\ell,m}(t)\right]\right)^2\right] = t$.
\end{assumption}

\begin{remark}
Note that direct computation show that, under Assumption~\ref{ass:SpecialLevyProcess}, $L \in L^2(\Omega, H^\eta({\IS^{2}}))$ for all $0 \leq \eta < \alpha/2 - 1$.
Observe further that, as in \cite{papi2025}, to investigate most of the properties of solutions to SPDEs driven by this type of Lévy noise, it is not necessary
for the components in the expansion~\eqref{eq:KL-expansion_Levy} to be independent or uncorrelated.
Note that the case of isotropic random fields is considered as a special case under Assumption~\ref{ass:SpecialLevyProcess}, see \cite{MR4462619,MR4701212,MR3404631}.
\end{remark}

Let us observe that solutions to the free stochastic Schr\"odinger equation in Section~\ref{sect-sch} are complex-valued. All the above can be extended to the Hilbert space with underlying field $\mathbb{C}$, changing the inner product with the Hermitian one. The covariance operator $Q$ will then be complex-valued with real eigenvalues since it is self-adjoint and Hermitian.

\subsection{Spaces of solutions}\label{solution_spaces}
In our framework, the solutions to the SPDEs we investigate in this paper have vector-valued solutions.
Hence, for reason of exposition, we define the spaces $L^2(\IS^2,\R^n)$, for $n\geq1$, using the tensor product relation, i.\,e.,
$$
L^2(\IS^2,\R^n)\simeq L^2(\IS^2)\otimes \R^n,
$$
where an orthonormal basis of this space can be defined using the usual rule for the tensor product: $\mathbb{Y}=\{Y_{\ell,m}\otimes e_i\}_{\ell=0,m=-\ell,i=1}^{\infty,\ell,n}$.
Here, we interpret
$Y_{\ell,m}\otimes e_i$ as $(0,\dots,\underbrace{Y_{\ell,m}}_i,\dots,0)$.

The solution to the stochastic Maxwell's equations on the sphere (see the next subsection and Section~\ref{sect-maxeq}) takes values in $L^2(\IS^2,\R^3)$, for which it is more convenient to work with the so-called vector spherical harmonics as an orthonormal basis:
for $\ell\in\mathbb N_0$ and  $m=-\ell,\ldots,\ell$, consider
\begin{align}\label{3d-onb}
    \begin{split}
	\mathbf{Y}_{\ell, m}&=Y_{\ell, m}\hat{\textbf{r}}, \\
	\mathbf{\Phi}_{\ell, m}&=\frac{1}{\sqrt{\lambda_\ell}}\nablalb {Y}_{\ell, m}, \\
	\mathbf{\Psi}_{\ell, m}&=\frac{1}{\sqrt{\lambda_\ell}}\hat{\textbf{r}}\times\nablalb Y_{\ell,m}.
    \end{split}
\end{align}
Here, $\hat{ \textbf{r}}$ is a unit vector in the radial direction, see \cite{Barrera_1985} for details.

\subsection{An abstract stochastic evolution equation}
Now we
present an abstract stochastic evolution equation that encompasses the three SPDEs that we will consider in more details in the following sections. 
We consider the following linear abstract stochastic evolution equation on the  sphere $\IS^{2}$:
\begin{align}\label{prob}
\begin{split}
   \dd X(t)&=AX(t)\dd t+B\dd L(t),\\
    X(0)&=X_0,
\end{split}
\end{align}
for $t \geq 0$.  
Here, we assume an $\mathcal{F}_0$-measurable initial condition 
$X_0$ (further assumptions on $X_0$ are given below for each of the three models). The evolution equation~\eqref{prob} is driven by an infinite-dimensional L\'evy noise $L$ satisfying Assumption~\ref{ass:StrongLevyProcess}.

The linear operator
$B\colon L^2(\IS^2,\mathbb R^k) \to L^2(\IS^2,\mathbb R^n)$, for some $k \geq 1$, will be specified below for each considered SPDE. The linear operator $-A$, while it changes for the three considered SPDEs, always generates a strongly continuous semigroup $(S(t))_{t\geq 0}$ of bounded linear operators on $L^2(\IS^{2},\R^n)$, where $n$ depends on the considered model. Moreover, throughout the paper, the operator $-A$ will be diagonalizable with a complete orthonormal basis of the Hilbert space $L^2(\IS^{2},\R^n)$ with eigenvalues $\{\lambda_j\}_{j\in J}$, where $J$ is some index set.

Under these assumptions, we consider 
the notion of mild solution of the stochastic evolution equation~\eqref{prob} as in \cite{peszat2007}: 
\begin{align}\label{mild_s}
X(t)=S(t)X(0)+\int_0^tS({t-s})B\dd L(s),\ t\geq0,
\end{align}
where the stochastic convolution is well defined as shown in \cite{peszat2007}. The existence and uniqueness for the mild solution~\eqref{mild_s} follow from simple modifications of \cite{MR4077824,MR4462619,papi2025}.

Since the spherical harmonics $\mathcal Y$ form an orthonormal basis of the Hilbert space $L^2(\IS^2,\R)$, the mild solution~\eqref{mild_s} has the following series expansion: For $j=1,\ldots,n$, one has 
\begin{align}\label{mildsol}
X_j(t)&=\sum_{\ell=0}^\infty\sum_{m=-\ell}^\ell\langle X_j(t),Y_{\ell,m} \rangle Y_{\ell,m},
\end{align}
where $X_j\in L^2({\IS^{2}}, \R)$ denotes the $j$-th component of $X$ as a vector-valued function.

\subsection{Three types of SPDEs on the sphere}
In this paper, we consider three concrete types of SPDEs on the sphere of the form~\eqref{prob}, that we now define. 

The first type of SPDE we consider is the linear stochastic wave equation on the sphere $\IS^2$ given as
\begin{equation}\label{swe}
\partial_{tt}u(t)-\Deltalb u(t)=\dot{L}(t)
\end{equation}
with initial conditions $u(0)=v_1\in L^2(\IS^2)$ and $\partial_{t}u(0)=v_2\in L^2(\IS^2)$.
The notation $\dot{L}$ stands for the formal time derivative of the $Q$-L\'evy process with series expansion~\eqref{eq:KL-expansion_Levy}. The stochastic wave equation on the sphere~\eqref{swe} can be written in the abstract
form~\eqref{prob} with
\begin{equation*}
 A=\begin{pmatrix}0 & I \\ \Deltalb & 0 \end{pmatrix},
 \quad B=\begin{pmatrix}0\\I \end{pmatrix},
 \quad X=\begin{pmatrix} u\\ \partial_tu\end{pmatrix}=\begin{pmatrix} u_1\\u_2\end{pmatrix},
 \quad X_0=\begin{pmatrix} v_1\\v_2\end{pmatrix}.
\end{equation*}
Here, and below, the operator $I$ is the identity operator in $L^2(\IS)$.

The second type of SPDE we consider is the free stochastic Schr\"odinger equation
on the sphere $\IS^2$,
\begin{equation}\label{eqn:SSE}
\ii\partial_tu(t)=\Deltalb u(t)+\dot L(t),
\end{equation}
with (possibly complex-valued) initial condition $u(0)\in L^2(\IS^2)$.
Here, the unknown $u(t)=u_R(t)+\ii u_I(t)$ is a complex-valued stochastic process.
Considering the real and imaginary parts of the SPDE~\eqref{eqn:SSE}, one can rewrite it in the abstract setting~\eqref{prob}
using
\begin{equation*}
 A=\begin{pmatrix}0 & \Deltalb \\ -\Deltalb & 0 \end{pmatrix},
 \quad B=\begin{pmatrix}0\\-I \end{pmatrix},
 \quad X=\begin{pmatrix} u_R\\u_I\end{pmatrix}, \quad X_0=\begin{pmatrix} u_R(0)\\u_I(0)\end{pmatrix}.
\end{equation*}

The third type of problem we consider is the stochastic Maxwell's equations in vacuum on the sphere,
\begin{align}\label{maxeq}
\partial_t \begin{pmatrix}
        E(t) \\ H(t)
    \end{pmatrix} = \begin{pmatrix}
        \varepsilon^{-1}\nablalb \times H(t) \\ -\mu^{-1}\nablalb \times E(t)
    \end{pmatrix} + \begin{pmatrix} \dot{L}_E(t) \\ \dot{L}_H(t) \end{pmatrix},
\end{align}
with initial conditions $E(0)$ and $H(0)$ in $L^2(\IS^2,\R^3)$. Here,
$E$ and $H$, are the electric and magnetic fields, respectively. Furthermore, it is known that $\text{div}(E)=\text{div}(H)=0$.
In equation~\eqref{maxeq}, $\varepsilon,\ \mu\in L^\infty_{+}(\IS^2)$, are, respectively, the permittivity and permeability,
and we set $\mu=\varepsilon=1$ for ease of presentation. Finally, observe that the SPDE~\eqref{maxeq} can be written in the abstract setting~\eqref{prob}
with
\begin{equation*}
 A=\begin{pmatrix} 0 & \nablalb\times \\ -\nablalb\times & 0 \end{pmatrix},
 \quad B=\begin{pmatrix} I\\ I \end{pmatrix},
 \quad X=\begin{pmatrix} E\\ H\end{pmatrix}, \quad X_0=\begin{pmatrix} E(0)\\ H(0)\end{pmatrix}.
\end{equation*}
We define the precise meaning of the surface curl operator $\nabla_{\IS^{2}}\times$ in Section~\ref{sect-maxeq}.

\subsection{Spectral discretization}
\label{subsec:spectral}
In this subsection, we discretize the abstract SPDE~\eqref{prob} in space using a spectral method.
We borrow some notation from \cite{MR4462619,papi2025}.

The exact solution to the stochastic evolution equation~\eqref{prob} is numerically approximated 
by truncating the series
expansion~\eqref{mildsol} at a given integer index $\kappa>0$. We then obtain the truncated solution: For $j=1,\ldots,n$, one has  
\begin{equation}\label{spectralsol}
X_j^\kappa(t)=\sum_{\ell=0}^\kappa\sum_{m=-\ell}^\ell\langle X_j^\kappa(t),Y_{\ell,m} \rangle Y_{\ell,m}. 
\end{equation}
Observe that the spectral approximation~\eqref{spectralsol} is the solution to the abstract stochastic evolution equation
\begin{align}
    \begin{split}
    \label{spectralprob}
\dd X^\kappa(t)&=A^\kappa X^\kappa(t)\dd t+\cP_\kappa B\dd L(t)\\
X^\kappa(0)&=\cP_\kappa X_0,
    \end{split}
\end{align}
where $A^\kappa\colon H^\kappa \to H^\kappa$ is the spectral projection of $A$
onto $$\mathcal D(A^\kappa) = H^\kappa= \spn\{Y_{\ell , m}, \ell = 0 , \dots, \kappa, m = -\ell, \dots, \ell\}.$$ That is, one has
$$
\left<A^\kappa\phi^\kappa,\psi^\kappa\right>=
\left<A\phi^\kappa,\psi^\kappa\right>,\ \forall\phi^\kappa,\psi^\kappa\in H^\kappa.
$$
The truncation operator $\cP_\kappa$ in the equation~\eqref{spectralprob} is defined componentwise as
$(\displaystyle\cP_\kappa Z)_j=\sum_{\ell=0}^\kappa\sum_{m=-\ell}^\ell Z^j_{\ell,m} Y_{\ell,m}$, for $j=1,\dots,n$ and $Z\in L^2(\IS^2,\R^n)$ with components given by the series expansion $\displaystyle Z_j=\sum_{\ell=0}^\infty\sum_{m=-\ell}^\ell Z^j_{\ell,m} Y_{\ell,m}$.
We can observe a slight abuse of notation when the truncation operator is applied to vector-valued functions. This is done for ease of presentation and computations.
The covariance operator of the truncated noise $L^\kappa = \cP_\kappa L$ is denoted by
$Q^\kappa = \cP_\kappa Q \cP_\kappa.$

\subsection{Temporal discretizations}
In this subsection, we further discretize  the semi-discrete problem~\eqref{spectralprob}
with the forward Euler--Maruyama scheme, the backward Euler--Maruyama scheme, and
the exponential Euler scheme, see for instance the literature on numerics for S(P)DEs
\cite{MR1214374, Lord_Powell_Shardlow_2014, MR4369963}.

For a given time step $\tau > 0$, we define the time grid $t_n=n\tau$ and the L\'evy increments
by $\Delta L^{\ell, m}_n=L_{\ell,m}(t_{n+1})-L_{\ell,m}(t_{n})$ for $m=-\ell,\ldots,\ell$,
and $\ell=0,\ldots,\kappa$, and $n\geq 0$.

When applied to the SPDE~\eqref{spectralprob}, the forward Euler--Maruyama scheme reads
\begin{equation}\label{eEM}
X^{\text{EM}}_{n+1}=X^{\text{EM}}_{n}+\tau A^\kappa X^{\text{EM}}_{n}+\cP_\kappa\left(B\Delta L_n\right).
\end{equation}
where $\Delta L_n= \displaystyle \sum_{\ell=0}^{\infty}\sum_{m = - \ell}^{\ell} \Delta L^{\ell, m}_n Y_{\ell , m}$.

When applied to the SPDE~\eqref{spectralprob}, the backward Euler--Maruyama scheme reads
\begin{equation}\label{bEM}
X^{\text{bEM}}_{n+1}=X^{\text{bEM}}_{n}+\tau A^\kappa X^{\text{bEM}}_{n+1}+\cP_\kappa\left(B\Delta L_n\right).
\end{equation}

When applied to the SPDE~\eqref{spectralprob}, the exponential Euler scheme reads
\begin{equation}\label{expEM}
X^{\text{expEM}}_{n+1}=e^{A^\kappa\tau}\left(X^{\text{expEM}}_{n}+\cP_\kappa\left(B\Delta L_n\right)\right).
\end{equation}

In the next sections, we will study the ability of these numerical schemes to reproduce the long-time behavior of
the three SPDEs on the sphere with respect to physical quantities such as the energy, mass, and momentum.

\section{Stochastic wave equation on the sphere}\label{sect-swe}
The conservation of the total energy of the deterministic linear wave equation on Euclidean domains is well known,
see for instance \cite[Section 2.4.3]{MR2597943}. When randomly perturbing the wave equation
(defined in a Euclidean domain) with an additive $Q$-Wiener process, the total energy is no longer a conserved quantity.
Indeed, one can show a trace formula for the total energy, that is, the expected value of the total energy of the exact solution grows linearly with time,
see for instance \cite{MR3033008,MR3012932}. In this section, we investigate the behavior of the total energy of the exact solution to the stochastic wave equation on the sphere~\eqref{swe}. Furthermore, we prove long-time results on the behavior of the energy along its numerical solutions.
The long-time behavior of the numerical solutions will then be illustrated numerically at the end of this section.
For ease of presentation, we consider the case of L\'evy processes with mean zero.
We conclude this section with a remark on the case when the mean of the L\'evy process is not zero.

For $t>0$, the total energy of the stochastic wave equation~\eqref{swe} at time $t$ is defined as
\begin{equation}\label{swe-energy}
\mathcal{E}(t)=\frac12\left(\norm{\partial_t u(t)}^2+\norm{(-\Deltalb)^{1/2}u(t)}^2\right).
\end{equation}

Let us first investigate the long-time behavior of the exact solution to the stochastic wave equation~\eqref{swe}, or its abstract setting~\eqref{prob}. Following, e.\,g., \cite{MR4462619}, we can explicitly write the semigroup $S(t)$ associated to the operator $A$ as follows
\begin{align}\label{sg_wave}
S(t)&=\begin{pmatrix}
\cos(t(-\Delta_{\IS^2})^{\frac12}) & 
(-\Delta_{\IS^2})^{-\frac12}\sin(t(-\Delta_{\IS^2})^{\frac12})\\
-(-\Delta_{\IS^2})^{\frac12}\sin(t(-\Delta_{\IS^2})^{\frac12}) & 
\cos(t(-\Delta_{\IS^2})^{\frac12})
\end{pmatrix}.
\end{align}

We can then rewrite the mild solution to the stochastic wave equation~\eqref{swe} as
\begin{align}
	\label{eqn:SWE_soln}
	\begin{split}
u_1(t)&=\cos(t(-\Deltalb)^{1/2})v_1+(-\Deltalb)^{-1/2}\sin(t(-\Deltalb)^{1/2})v_2\\
&\qquad+\int_0^t(-\Deltalb)^{-1/2}\sin((t-s)(-\Deltalb)^{1/2})\dd L(s),\\
u_2(t)&=-(-\Deltalb)^{1/2}\sin(t(-\Deltalb)^{1/2})v_1+\cos(t(-\Deltalb)^{1/2})v_2\\
&\qquad+\int_0^t\cos((t-s)(-\Deltalb)^{1/2})\dd L(s).
	\end{split}
\end{align}

We now show that the expected value of the energy of the stochastic wave equation on the sphere~\eqref{swe} has a linear growth in time.

\begin{proposition}\label{prop:energy-exact}
Consider the stochastic wave equation on the sphere~\eqref{swe} with initial values $(u(0),\partial_tu(0))=(v_1,v_2)\in [L^2(\IS^2,\R)]^2$.
Assume that the expected value of the initial energy $\E\left[\mathcal{E}(0)\right]<\infty$ and that the $Q$-L\'evy process $L$ is trace-class.
The solution to this SPDE satisfies the trace formula for the energy:
\begin{align*}
  \E\left[\mathcal{E}(t)\right]=\E\left[\mathcal{E}(0)\right]+\frac{t}2\tr(Q),
\end{align*}
for every time $t\geq0$.
\end{proposition}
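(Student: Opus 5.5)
The plan is to work mode by mode in the spherical harmonic basis, using the explicit mild solution \eqref{eqn:SWE_soln}. Writing $u_1(t)=\sum_{\ell,m}u^{\ell,m}_1(t)Y_{\ell,m}$ and similarly for $u_2$, the energy \eqref{swe-energy} becomes
\[
\mathcal E(t)=\frac12\sum_{\ell=0}^\infty\sum_{m=-\ell}^\ell\Bigl(|u_2^{\ell,m}(t)|^2+\lambda_\ell|u_1^{\ell,m}(t)|^2\Bigr),
\]
since $(-\Deltalb)^{1/2}Y_{\ell,m}=\sqrt{\lambda_\ell}Y_{\ell,m}$. For each mode with $\ell\geq1$, set $\omega_\ell=\sqrt{\lambda_\ell}$. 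Then
\[
\sqrt{\lambda_\ell}\,u_1^{\ell,m}(t)=a\cos(\omega_\ell t)+b\sin(\omega_\ell t)+\int_0^t\sin(\omega_\ell(t-s))\dd L_{\ell,m}(s),
\]
\[
u_2^{\ell,m}(t)=-a\sin(\omega_\ell t)+b\cos(\omega_\ell t)+\int_0^t\cos(\omega_\ell(t-s))\dd L_{\ell,m}(s),
\]
where $a=\omega_\ell\langle v_1,Y_{\ell,m}\rangle$ and $b=\langle v_2,Y_{\ell,m}\rangle$. The deterministic part is a rotation of $(a,b)$, so its squared modulus equals the initial modal energy. For the mode $\ell=0$ we have $\lambda_0=0$, so only $u_2^{0,0}(t)=b+L_{0,0}(t)$ contributes, and the same reasoning applies with the cosine kernel identically equal to $1$.

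Next we take expectations and expand the squares, which produces three kinds of terms. The first is the initial-energy term. The second is the set of cross terms between the initial data and the stochastic integrals; these vanish because $X_0$ is $\mathcal F_0$-measurable, the integrals are over the independent future increments of $L$, and $L$ is assumed to have mean zero (so the compensated integral has zero mean conditionally on $\mathcal F_0$). The third is the squared stochastic integrals, which we handle with the Itô isometry for square-integrable Lévy martingales (see \cite{peszat2007}):
\[
\E\Bigl[\Bigl(\int_0^t f(t-s)\dd L_{\ell,m}(s)\Bigr)^2\Bigr]=\varlm\int_0^t f(t-s)^2\dd s .
\]
Summing the sine and cosine contributions gives $\varlm\int_0^t(\sin^2+\cos^2)\dd s=t\,\varlm$. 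The cross term between the $\sin$ and $\cos$ integrals of the same mode does not enter, because the two contributions sit in different summands of the energy, each already squared. So each mode contributes $\frac12\E[\text{initial modal energy}]+\frac t2\varlm$.

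Finally, we sum over $(\ell,m)$. Since $\sum_{\ell,m}\varlm=\tr Q$, this gives $\E[\mathcal E(t)]=\E[\mathcal E(0)]+\frac t2\tr Q$. Interchanging the expectation with the infinite sum is justified by Tonelli, since all terms are nonnegative once the vanishing cross terms are removed modewise; finiteness follows from $\E[\mathcal E(0)]<\infty$ and $\tr Q<\infty$.

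The main obstacle is rigor rather than computation. We need to ensure the mild solution has the stated modal expansion, and that the Itô isometry applies to one-dimensional Lévy processes that are not necessarily independent across modes. Independence across modes is never needed, since the energy is diagonal in $(\ell,m)$ and only same-mode second moments appear. The isometry then follows because each $L_{\ell,m}$ is a square-integrable martingale with $\langle L_{\ell,m}\rangle_t=t\varlm$.
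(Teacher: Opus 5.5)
Your argument is correct and is essentially the paper's proof: you insert the mild solution, note that the deterministic part is a rotation that reproduces the initial energy, drop the cross terms with the stochastic convolution using the mean-zero assumption, and then combine the It\^o isometry with $\sin^2+\cos^2=1$. The only difference is bookkeeping. You work mode by mode with the scalar isometry $\E\bigl[\bigl(\int_0^t f(t-s)\,\dd L_{\ell,m}(s)\bigr)^2\bigr]=\varlm\int_0^t f(t-s)^2\,\dd s$ and then sum $\sum_{\ell,m}\varlm=\tr Q$, whereas the paper applies the Hilbert-space isometry directly and uses $\norm{\sin((t-s)(-\Deltalb)^{1/2})Q^{1/2}}_{HS}^2+\norm{\cos((t-s)(-\Deltalb)^{1/2})Q^{1/2}}_{HS}^2=\norm{Q^{1/2}}_{HS}^2=\tr Q$.
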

\begin{proof}
Let us recall that we are assuming that $\mathbb E\left[L(t)\right]=0$ and that we use the notation $u_1=u$ and $u_2=\partial_tu$. We use the expression~\eqref{eqn:SWE_soln} for the mild solution to the considered SPDE and compute the expectation of the energy as follows
\begin{align*}
2\E\left[\cE(t)\right] &= \mathbb{E}\left[\norm{u_{2}(t)}^2+\norm{(-\Deltalb)^{1/2}u_{1}(t)}^2\right]\\
        &= \E\left[ \norm{(-\Deltalb)^{1/2}\sin(t(-\Deltalb)^{1/2})v_1}^2 + \norm{\cos(t(-\Deltalb)^{1/2})v_2}^2\right.\\
        & \left.\quad -2 \langle (-\Deltalb)^{1/2}\sin(t(-\Deltalb)^{1/2})v_1, \cos(t(-\Deltalb)^{1/2})v_2\rangle\right.\\
        & \left.\quad + \norm{ \int_0^t\cos((t-s)(-\Deltalb)^{1/2})\dd L(s)}^2\right.\\
        & \left.\quad + \norm{ (-\Deltalb)^{1/2}\cos(t(-\Deltalb)^{1/2})v_1}^2 + \norm{\sin(t(-\Deltalb)^{1/2})v_2}^2  \right.\\
        &\left. \quad + 2 \langle (-\Deltalb)^{1/2} \cos(t(-\Deltalb)^{1/2})v_1, \sin(t(-\Deltalb)^{1/2})v_2\rangle  \right.\\
        &\left. \quad + \norm{\int_0^t\sin((t-s)(-\Deltalb)^{1/2})\dd L(s)}^2
        \right]\\
        & = 2\E\left[\cE(0)\right] + \E\left[\norm{ \int_0^t\cos((t-s)(-\Deltalb)^{1/2})\dd L(s)}^2 +\norm{\int_0^t\sin((t-s)(-\Deltalb)^{1/2})\dd L(s)}^2\right].
\end{align*}
In the last step, we have used the definition of the initial energy $\cE(0)$. Using It\^o's isometry, we then get the relation
\begin{align*}
2\E\left[\cE(t)\right] &= 2\E\left[\cE(0)\right] +\int_0^t\left(\norm{\sin((t-s)(-\Deltalb)^{1/2})Q^{1/2}}^2_{HS}+\norm{\cos((t-s)(-\Deltalb)^{1/2})Q^{1/2}}_{HS}^2\right)\dd s\\
&=2\E\left[\cE(0)\right] +\int_0^t\norm{Q^{1/2}}^2_{HS}\dd s=2\E\left[\cE(0)\right] +t\ \tr(Q).
\end{align*}
Here, we have used the properties of the
Hilbert--Schmidt norm and the property of the 
cosine and sine operators: $\cos^2(t(-\Deltalb)^{1/2})+\sin^2(t(-\Deltalb)^{1/2})=I$.

This concludes the proof.
\end{proof}

\subsection{Spectral discretization}
Next, we investigate the long-time behavior of the numerical solution given by the spectral discretization~\eqref{spectralsol}. We recall that this numerical solution satisfies the stochastic evolution equation~\eqref{spectralprob} and, 
in the case of the stochastic wave equation on the sphere~\eqref{swe}, this numerical solution reads
\begin{align}
\label{eqn:SWE_spectral}
\begin{split}
u_1^\kappa(t)&=\cos(t(-\Deltalb^\kappa)^{1/2})\cP_\kappa v_1+(-\Deltalb^\kappa)^{-1/2}\sin(t(-\Deltalb^\kappa)^{1/2})\cP_\kappa v_2\\
&\qquad+\int_0^t(-\Deltalb^\kappa)^{-1/2}\sin((t-s)(-\Deltalb^\kappa)^{1/2})\cP_\kappa\dd L(s),\\
u_2^\kappa(t)&=-(-\Deltalb^\kappa)^{1/2}\sin(t(-\Deltalb^\kappa)^{1/2})\cP_\kappa v_1+\cos(t(-\Deltalb^\kappa)^{1/2})\cP_\kappa v_2\\
&\qquad+\int_0^t\cos((t-s)(-\Deltalb^\kappa)^{1/2})\cP_\kappa\dd L(s).
\end{split}
\end{align}

We then define the total energy of the spectral discretization by
\begin{equation}\label{swe-spectral-energy}
\mathcal{E}^\kappa(t)=\frac{1}{2}\left(\norm{u_2^\kappa(t)}^2+\norm{(-\Delta_{\IS^2}^\kappa)^{1/2}u_1^\kappa(t)}^2\right).
\end{equation}
We show that the numerical solution~\eqref{eqn:SWE_spectral} has the same long-time behavior, with respect to the energy, as the exact solution~\eqref{eqn:SWE_soln} to the stochastic wave equation on the sphere~\eqref{swe}.
\begin{proposition}\label{prop:energy-spectral}Consider the stochastic wave equation on the sphere~\eqref{swe} with initial values $(u(0),\partial_tu(0))=(v_1,v_2)\in [L^2(\IS^2,\R)]^2$.
Assume that the expected value of the initial energy $\E\left[\mathcal{E}(0)\right]<\infty$ and that the $Q$-L\'evy process $L$ is trace-class.
Let ($u_1^\kappa=u^\kappa$,$u_2^\kappa=\partial_tu^\kappa$) denote the spectral approximation given by equation~\eqref{eqn:SWE_spectral}.
The spectral approximation satisfies the trace formula for the energy:
\begin{align*}
  \E\left[\mathcal{E}^\kappa(t)\right]=\E\left[\mathcal{E}^\kappa(0)\right]+\frac{t}2\tr(Q^\kappa),
\end{align*}
for every time $t\geq0$.
\end{proposition}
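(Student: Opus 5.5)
The plan is to repeat the argument of Proposition~\ref{prop:energy-exact} on the finite-dimensional space $H^\kappa$. Everything commutes with $\Deltalb^\kappa$, which acts diagonally on the spherical harmonics $Y_{\ell,m}$ with $\ell\le\kappa$ with eigenvalues $-\lambda_\ell$. As before, I assume $\E[L(t)]=0$.

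First I insert the mild representation~\eqref{eqn:SWE_spectral} into the energy~\eqref{swe-spectral-energy}. I expand $\norm{u_2^\kappa(t)}^2$ and $\norm{(-\Deltalb^\kappa)^{1/2}u_1^\kappa(t)}^2$, and group the result into three kinds of terms:
\begin{itemize}
\item[(i)] the deterministic part, depending only on $\cP_\kappa v_1$ and $\cP_\kappa v_2$;
\item[(ii)] the stochastic convolutions $\int_0^t\cos((t-s)(-\Deltalb^\kappa)^{1/2})\cP_\kappa\dd L(s)$ and $\int_0^t\sin((t-s)(-\Deltalb^\kappa)^{1/2})\cP_\kappa\dd L(s)$;
\item[(iii)] the cross terms between (i) and (ii).
\end{itemize}
The factor $(-\Deltalb^\kappa)^{\pm1/2}$ in front of the sine convolution in $u_1^\kappa$ cancels against the factor in the energy. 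The mode $\ell=0$, where $\lambda_0=0$, needs a separate check. There I interpret $(-\Deltalb^\kappa)^{-1/2}\sin(t(-\Deltalb^\kappa)^{1/2})$ as $t$, so that after multiplication by $(-\Deltalb^\kappa)^{1/2}=0$ this contribution vanishes. This is consistent with the identity $\cos^2+\sin^2=I$ used below, since on that mode the cosine equals $1$ and the sine equals $0$.

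Second, the cross terms have zero expectation. The initial data are $\mathcal F_0$-measurable, and the stochastic integrals against the mean-zero L\'evy process have zero conditional expectation given $\mathcal F_0$. For the deterministic part, I use $\cos^2+\sin^2=I$ together with the commutativity of the functions of $\Deltalb^\kappa$. Exactly as in the proof of Proposition~\ref{prop:energy-exact}, the mixed terms $\pm2\langle\cdot,\cdot\rangle$ cancel, and the remaining terms recombine to $2\,\mathcal E^\kappa(0)=\norm{\cP_\kappa v_2}^2+\norm{(-\Deltalb^\kappa)^{1/2}\cP_\kappa v_1}^2$.

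Third, I apply It\^o's isometry for square-integrable L\'evy integrals to the two stochastic convolutions. This gives
\[
\int_0^t\Big(\norm{\cos((t-s)(-\Deltalb^\kappa)^{1/2})\cP_\kappa Q^{1/2}}_{HS}^2+\norm{\sin((t-s)(-\Deltalb^\kappa)^{1/2})\cP_\kappa Q^{1/2}}_{HS}^2\Big)\dd s .
\]
Using $\cos^2+\sin^2=I$ on $H^\kappa$, the integrand equals $\norm{\cP_\kappa Q^{1/2}}_{HS}^2=\tr(\cP_\kappa Q\cP_\kappa)=\tr(Q^\kappa)$. Integrating over $[0,t]$ gives $t\,\tr(Q^\kappa)$, and dividing by $2$ yields the claim.

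All steps are routine; the only point needing care is the isometry step. There I must make sure that the Hilbert--Schmidt norm is taken with the truncated covariance, i.e.\ that $\cP_\kappa$ commutes with the cosine and sine operators so it can be moved next to $Q^{1/2}$. I must also justify finiteness: this holds because $H^\kappa$ is finite dimensional and $\tr(Q)<\infty$.
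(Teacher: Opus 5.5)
Your proposal is correct and follows the paper's proof. You insert the spectral mild solution into $\mathcal{E}^\kappa$, let the deterministic terms recombine to $2\mathcal{E}^\kappa(0)$ while the cross terms vanish in expectation, and then apply It\^o's isometry together with $\cos^2+\sin^2=I$ to obtain $\norm{\cP_\kappa Q^{1/2}}_{HS}^2=\tr(Q^\kappa)$, with your explicit handling of the $\ell=0$ mode being a detail the paper leaves implicit.
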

\begin{proof}
The proof of this result is very similar to the one for the exact solution of the stochastic wave equation on the sphere~\eqref{swe} in Proposition~\ref{prop:energy-exact}. The only difference being in the treatment of the noise.

\newcommand{\stkout}[1]{\ifmmode\text{\sout{\ensuremath{#1}}}\else\sout{#1}\fi}
After inserting the spectral approximation~\eqref{eqn:SWE_spectral} into the energy~\eqref{swe-spectral-energy}, using trigonometric identities and It\^o's isometry, one obtains
\begin{align*}
&2\E\left[\cE^\kappa(t)\right]\\
&= 2\E\left[\cE^\kappa(0)\right] +\int_0^t\left(\norm{\sin((t-s)(-\Deltalb^\kappa)^{1/2})\cP_\kappa Q^{1/2}}^2_{HS}+\norm{\cos((t-s)(-\Deltalb^\kappa)^{1/2})\cP_\kappa Q^{1/2}}_{HS}^2\right)\dd s\\
&=2\E\left[\cE^\kappa(0)\right] +\int_0^t\norm{\cP_\kappa  Q^{1/2}}^2_{HS}\dd s=2\E\left[\cE^\kappa(0)\right] +t\ \tr(Q^\kappa)
\end{align*}
and thus concludes the proof.
\end{proof}

\subsection{Forward Euler--Maruyama scheme}
It is now time to study the evolution of the total energy for the three time integrators (forward/backward Euler--Maruyama schemes and exponential Euler scheme,
also called trigonometric scheme in this context), see Section~\ref{sect-setting}.

We start by investigating the long-time behavior, with respect to the energy, of the forward Euler--Maruyama scheme~\eqref{eEM}. When applied to the stochastic wave equation on the sphere~\eqref{swe}, this time integrator reads 
\begin{align}
    \begin{split}
\label{eqn:SWE_EM}
u_{1,n+1}^\kappa&=u_{1,n}^\kappa+\tau u_{2,n}^\kappa,\\
u_{2,n+1}^\kappa&=u_{2,n}^\kappa+\tau \Deltalb^\kappa u_{1,n}^\kappa+\cP_\kappa\Delta L_n.
    \end{split}
\end{align}

The total energy of the fully discrete solution is then defined as
\begin{equation}\label{swe-full-energy}
\mathcal{E}^\kappa_n=\frac{1}{2}\left(\norm{u_{2,n}^\kappa}^2+\norm{(-\Delta_{\IS^2}^\kappa)^{1/2}u_{1,n}^\kappa}^2\right).
\end{equation}

Expanding the numerical solution~\eqref{eqn:SWE_EM} in terms of the spherical harmonics (omitting the index $\kappa$ in their coefficients for ease of presentation)
\begin{equation*}
u_{1,n}^\kappa = \sum_{\ell = 0}^{\kappa} \sum_{m=-\ell}^{\ell} u_{1,n}^{\ell, m} Y_{\ell, m}\quad\text{and}\quad
u_{2,n}^\kappa = \sum_{\ell = 0}^{\kappa} \sum_{m=-\ell}^{\ell} u_{2,n}^{\ell, m} Y_{\ell, m},
\end{equation*}
one then obtains the system of equations, for $\ell=0,\ldots,\kappa$, $m=-\ell,\ldots,\ell$, and $n\geq 0$,
\begin{align}
    \begin{split}
    \label{eqn:SWE_EM2}
    u_{1,n+1}^{\ell, m} &= u_{1, n}^{\ell, m} + \tau u_{2, n}^{\ell, m}\\
    u_{2,n+1}^{\ell, m} &= u_{2, n}^{\ell, m} - \tau  \lambda_\ell u_{1, n}^{\ell, m} + \Delta L^{\ell, m}_n
    \end{split}
\end{align}
for the forward Euler--Maruyama scheme~\eqref{eqn:SWE_EM}. Note that $\E\left[(\Delta L^{\ell, m}_n)^2\right]=v_{\ell,m}t_n>0$ by~\eqref{Lvlm}.

The energy of the forward Euler--Maruyama scheme~\eqref{eqn:SWE_EM} can thus be written as
\begin{align*}
\mathcal{E}^\kappa_n&=\frac12\left( \norm{\sum_{\ell = 0}^{\kappa} \sum_{m=-\ell}^{\ell} u_{2,n}^{\ell, m} Y_{\ell, m} }^2
+\norm{(-\Delta_{\IS^2}^\kappa)^{1/2}\sum_{\ell = 0}^{\kappa} \sum_{m=-\ell}^{\ell} u_{1,n}^{\ell, m} Y_{\ell, m} }^2  \right)\\
&=\frac12\left( \sum_{\ell = 0}^{\kappa} \sum_{m=-\ell}^{\ell}\left| u_{2,n}^{\ell, m}\right|^2
+  \sum_{\ell = 0}^{\kappa} \sum_{m=-\ell}^{\ell} \left| \sqrt{\lambda_\ell} u_{1,n}^{\ell, m} \right|^2  \right)
\end{align*}
due to orthonormality of the spherical harmonics. We denote the above as
\begin{equation}\label{energEM}
\mathcal{E}^\kappa_n=\mathcal{E}^{0,0}_n+\sum_{\ell=1}^{\kappa}\sum_{m=-\ell}^{\ell} \mathcal{E}^{\ell,m}_n,
\end{equation}
where we set
$$
\mathcal{E}^{\ell,m}_n= \frac{1}{2}\left(\left| u_{2,n}^{\ell, m}\right|^2+\left| \sqrt{\lambda_\ell} u_{1,n}^{\ell, m} \right|^2\right).
$$
The next result shows that the expected energy along the forward Euler--Maruyama scheme, grows exponentially
instead of linearly as for the exact solution, see Proposition~\ref{prop:energy-exact}.
\begin{proposition}\label{prop:energy-em}
Consider the stochastic wave equation on the sphere~\eqref{swe} with initial values $(u(0),\partial_tu(0))=(v_1,v_2)\in [L^2(\IS^2,\R)]^2$.
Assume that the expected value of the initial energy $\E\left[\mathcal{E}(0)\right]<\infty$ and that the $Q$-L\'evy process $L$ is trace-class.
Furthermore, assume that the expected value of the initial energy satisfies $0<\E\left[\mathcal{E}^\kappa_0-\cE_0^{0,0}\right]$.
Let $(u_{1,n}^\kappa, u_{2,n}^\kappa)$ denote the fully-discrete approximation given by the forward Euler--Maruyama scheme~\eqref{eqn:SWE_EM}.
The energy of this fully-discrete approximation grows exponentially fast in time:
\begin{equation*}
\E\left[\mathcal{E}^\kappa_n\right] \geq \exp\left(\frac12\tau t_n\right)\E\left[\mathcal{E}^\kappa_0 - \mathcal{E}_0^{0,0}\right]
\end{equation*}
for every discrete times $t_n=n\tau$ with integers $n\geq0$.
\end{proposition}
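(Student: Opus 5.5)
Since the modes decouple, the plan is to derive an exact one-step recursion for the expected energy of each mode $(\ell,m)$ with $\ell\ge1$, and then sum over modes using the decomposition \eqref{energEM}. From \eqref{eqn:SWE_EM2},
\begin{align*}
2\mathcal{E}^{\ell,m}_{n+1} &= \bigl(u_{2,n}^{\ell,m}-\tau\lambda_\ell u_{1,n}^{\ell,m}+\Delta L_n^{\ell,m}\bigr)^2+\lambda_\ell\bigl(u_{1,n}^{\ell,m}+\tau u_{2,n}^{\ell,m}\bigr)^2 .
\end{align*}
Expanding, the cross terms $\pm 2\tau\lambda_\ell u_{1,n}^{\ell,m}u_{2,n}^{\ell,m}$ cancel, and the deterministic part becomes $(1+\tau^2\lambda_\ell)\bigl((u_{2,n}^{\ell,m})^2+\lambda_\ell (u_{1,n}^{\ell,m})^2\bigr)$.

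Next, take expectations. The terms linear in the increment $\Delta L_n^{\ell,m}$ vanish: the increment is independent of $\mathcal F_{t_n}$ and has mean zero, since $L$ is assumed centered as stated at the start of the section. The quadratic noise term contributes the nonnegative quantity $\E[(\Delta L_n^{\ell,m})^2]=v_{\ell,m}\tau$. This gives
\[
\E[\mathcal{E}^{\ell,m}_{n+1}] = (1+\tau^2\lambda_\ell)\E[\mathcal{E}^{\ell,m}_{n}] + \tfrac12 v_{\ell,m}\tau \ \ge\ (1+\tau^2\lambda_\ell)\E[\mathcal{E}^{\ell,m}_n].
\]
For $\ell\ge1$ we have $\lambda_\ell\ge 2$, so $1+\tau^2\lambda_\ell\ge 1+2\tau^2$. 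Iterating yields $\E[\mathcal{E}^{\ell,m}_n]\ge(1+2\tau^2)^n\E[\mathcal{E}^{\ell,m}_0]$. The mode $(0,0)$ has $\lambda_0=0$, so its energy is just $\frac12 (u_{2,n}^{0,0})^2$, whose expectation is nondecreasing (it gains $\frac12 v_{0,0}\tau$ per step); it is therefore nonnegative, and it can be dropped from below. This is the reason the hypothesis is phrased in terms of $\E[\mathcal E^\kappa_0-\mathcal E^{0,0}_0]>0$.

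Summing over $\ell=1,\dots,\kappa$ and all $m$ gives
\[
\E[\mathcal{E}^\kappa_n]\ge \E\Bigl[\sum_{\ell\ge1}\sum_m \mathcal{E}^{\ell,m}_n\Bigr]\ge (1+2\tau^2)^n\,\E[\mathcal{E}^\kappa_0-\mathcal{E}^{0,0}_0].
\]
It remains to compare with the claimed exponential. Using $\log(1+x)\ge x/2$ for $0\le x\le 1$, we get $(1+2\tau^2)^n\ge \exp(n\tau^2)=\exp(\tau t_n)\ge\exp(\tfrac12\tau t_n)$ whenever $2\tau^2\le1$. For larger $\tau$, $\log(1+2\tau^2)\ge\log 2\ge\tau^2/2$ fails, so that case needs separate care: I would instead use $\log(1+x)\ge x/(1+x)$, which gives $\log(1+2\tau^2)\ge 2\tau^2/(1+2\tau^2)$. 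Alternatively, the claimed bound may simply be intended for $\tau\le1$, where $\log(1+2\tau^2)\ge\tau^2/2$ holds easily.

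The main obstacle is this last elementary inequality for the constant $\frac12$ in the exponent, valid over the admissible range of $\tau$; the rest is routine. One integrability detail should also be recorded: $\E[(\Delta L_n^{\ell,m})^2]$ equals $v_{\ell,m}\tau$ by stationarity of increments, and the cross terms vanish by independence of increments from $\mathcal F_{t_n}$.
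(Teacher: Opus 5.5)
Your proposal is correct and follows the same route as the paper. Both derive the exact per-mode identity $\E[\mathcal{E}^{\ell,m}_{n+1}]=(1+\tau^2\lambda_\ell)\E[\mathcal{E}^{\ell,m}_n]+\frac12 v_{\ell,m}\tau$ for $\ell\ge1$, iterate it, discard the nonnegative $(0,0)$ mode, and compare with the exponential. On that last step, the paper's proof simply assumes $\tau<1$. Some restriction of this kind is genuinely necessary: for fixed $\kappa$ and $\tau$ so large that $e^{\tau^2/2}>1+\tau^2\lambda_\kappa$, the true growth factor $(1+\tau^2\lambda_\kappa)^n$ is eventually beaten by $e^{\tau^2 n/2}$, so the claimed bound fails. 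Your fallback via $\log(1+x)\ge x/(1+x)$ therefore cannot work, and you should replace it with the explicit hypothesis $\tau\le 1$.
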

\begin{proof}
Let $\ell\geq1$ and consider the component $\mathcal E_{n+1}^{\ell,m}$ of the total energy $\mathcal{E}^\kappa_{n+1}$ in equation~\eqref{energEM}.
We now borrow some ideas in the proof of \cite[Theorem~3]{MR2083326} given for scalar linear oscillators driven by a standard Brownian motion.
Using the definitions of $\mathcal E_{n+1}^{\ell,m}$ and of the forward Euler--Maruyama scheme~\eqref{eqn:SWE_EM2}, we obtain
\begin{align}\label{energy0EM}
    \begin{split}
    \E\left[\cE_{n+1}^{\ell,m}\right] &=\frac{1}{2}\E\left[\lambda_\ell\left(u_{1,n+1}^{\ell, m}\right)^2 + \left(u_{2,n+1}^{\ell, m}\right)^2\right]\\
    &= \frac{1}{2} \E\left[\lambda_\ell \left(u_{1,n}^{\ell, m}\right)^2 + \tau^2 \lambda_\ell \left(u_{2,n}^{\ell, m}\right)^2
    + 2 \lambda_\ell^2\tau u_{1,n}^{\ell, m}u_{2,n}^{\ell, m} + \left(u_{2,n}^{\ell, m}\right)^2\right.\\
    & \left.+ \tau^2 \lambda_\ell^2\left(u_{1,n}^{\ell, m}\right)^2 - 2 \lambda_\ell^2 \tau u_{2,n}^{\ell, m}u_{1,n}^{\ell, m} + \left(\Delta L^{\ell, m}_n\right)^2\right]\\
    &=\E\left[\cE_{n}^{\ell,m}\right] +  \frac12\tau^2 \lambda_\ell \E\left[\left(u_{2,n}^{\ell, m}\right)^2 + \lambda_\ell\left(u_{1,n}^{\ell, m}\right)^2\right]+\frac{1}{2}\E\left[\left(\Delta L^{\ell, m}_n\right)^2\right]\\
    & \geq \left(1 + \lambda_\ell\tau^2\right)\E\left[\cE_{n}^{\ell,m}\right].
    \end{split}
\end{align}

An iteration of the above provides us with the inequality
\begin{equation*}
\E\left[\cE_{n}^{\ell,m}\right] \geq (1 + \lambda_\ell\tau^2)^n\E\left[\cE_{0}^{\ell,m}\right].
\end{equation*}
For a time-step size $\tau<1$, one thus obtains
$$
\E\left[\cE_{n}^{\ell,m}\right] \geq (1 + \lambda_\ell\tau^2)^n\E\left[\cE_{0}^{\ell,m}\right]\geq \exp\left(\frac12\tau^2 n\right)\E\left[\cE_{0}^{\ell,m}\right]
\geq \exp\left(\frac12\tau t_n\right)\E\left[\cE_{0}^{\ell,m}\right].
$$
The expected total energy of the forward Euler--Maruyama scheme thus satisfies
$$
\E\left[\mathcal{E}^\kappa_n\right]=\E\left[\cE_{n}^{0,0}\right]+\sum_{\ell=1}^\kappa\sum_{m=-\ell}^{\ell}\E\left[\cE_{n}^{\ell,m}\right]\geq
\exp\left(\frac12\tau t_n\right)\sum_{\ell=1}^\kappa\sum_{m=-\ell}^{\ell}\E\left[\cE_{0}^{\ell,m}\right]
\geq \exp\left(\frac12\tau t_n\right)\E\left[\mathcal{E}^\kappa_0 - \mathcal{E}_0^{0,0}\right],
$$
completing the proof of the proposition.
\end{proof}
\begin{remark}
We remark that if the initial energy $\E\left[\mathcal{E}^\kappa_0\right]=0$, then instead of the inequality~\eqref{energy0EM}, one would get the relation
\begin{equation*}
\E\left[\cE_{1}^{\ell,m}\right] = \left(1 + \lambda_\ell\tau^2\right)\E\left[\cE_{0}^{\ell,m}\right]+\frac{1}{2}\E\left[\left(\Delta L^{\ell, m}_0\right)^2\right]=\frac12v_{\ell,m}\tau\geq0
\end{equation*}
for the first time-step. One then obtains the following exponential increase in the energy for the Euler--Maruyama scheme
\begin{equation*}
\E\left[\mathcal{E}^{\ell,m}_{n+1}\right] \geq \exp\left(\frac12\tau t_n\right)\frac12v_{\ell,m}\tau.
\end{equation*}
\end{remark}

\subsection{Backward Euler--Maruyama method}
Our next goal is to investigate the long-time behavior, with respect to the energy, of the backward Euler--Maruyama scheme~\eqref{bEM} when applied to the stochastic wave equation on the sphere~\eqref{swe}. This numerical scheme is given by the following implicit relation
\begin{align}
\begin{split}
    \label{eqn:SWE_BEM}
    u_{1,n+1}^\kappa&=u_{1,n}^\kappa+\tau u_{2,n+1}^\kappa,\\
    u_{2,n+1}^\kappa&=u_{2,n}^\kappa+\tau \Deltalb^\kappa u_{1,n+1}^\kappa+\cP_\kappa\Delta L_n.
\end{split}
\end{align}
As we did previously for the forward Euler--Maruyama scheme, we obtain the following system of equation for the backward Euler--Maruyama scheme~\eqref{eqn:SWE_BEM}
\begin{align}
    \begin{split}
    \label{eqn:SWE_BEM2}
    u_{1,n+1}^{\ell, m} &= u_{1, n}^{\ell, m} + \tau u_{2, n+1}^{\ell, m}\\
    \left(1+\tau^2\lambda_\ell\right)u_{2,n+1}^{\ell, m} &= u_{2, n}^{\ell, m} - \tau\lambda_\ell u_{1, n}^{\ell, m} + \Delta L^{\ell, m}_n
    \end{split}
\end{align}
for $\ell=0,\ldots,\kappa$, $m=-\ell,\ldots,\ell$, and $n\geq 0$. We use the same notation for the energy of the backward Euler--Maruyama scheme as
for the forward Euler--Maruyama scheme, see~\eqref{energEM}.

We now prove that the expected energy of the backward Euler--Maruyama scheme grows more slowly than that of the exact solution, see Proposition~\ref{prop:energy-exact}.
\begin{proposition}\label{prop:energy-bem}
Consider the stochastic wave equation on the sphere~\eqref{swe} with initial values $(u(0),\partial_tu(0))=(v_1,v_2)\in [L^2(\IS^2,\R)]^2$.
Assume that the expected value of the initial energy $\E\left[\mathcal{E}(0)\right]<\infty$ and that the $Q$-L\'evy process $L$ is trace-class.
Furthermore, assume that the expected value of the initial energy satisfies $0<\E\left[\mathcal{E}^\kappa_0\right]$.
Let $\left(u_{1,n}^\kappa, u_{2,n}^\kappa\right)$ denote the fully-discrete approximation given by the backward Euler--Maruyama scheme~\eqref{eqn:SWE_BEM}.
The energy of this fully-discrete approximation grows at a slower rate than the exact solution to the considered SPDE:
\begin{equation*}
		\E\left[\cE^\kappa_n\right] <
         \E\left[\cE^\kappa_0\right]+ \frac{1}{2\tau}\tr\left(Q^\kappa\right) + \frac {t_n} {2} v_{0,0}
\end{equation*}
for every discrete times $t_n=n\tau$ with integers $n\geq0$ and where we recall that $v_{0,0}=\Var(L_{0,0}(1))$. More so, we have
\begin{equation}\label{eqn:SWE_energy_BEM_limit}
    \lim_{t_n\to\infty}\frac{\E\left[\mathcal{E}^\kappa_n\right]}{t_n}=\frac 1 2 v_{0,0}.
\end{equation}
\end{proposition}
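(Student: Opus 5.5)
We work mode by mode, exactly as for the forward scheme, using the decomposition~\eqref{energEM} and the scalar recursion~\eqref{eqn:SWE_BEM2}. Fix $\ell\geq1$ and $m$. Substituting the second line of~\eqref{eqn:SWE_BEM2} into the first gives $(1+\tau^2\lambda_\ell)u_{1,n+1}^{\ell,m}=u_{1,n}^{\ell,m}+\tau u_{2,n}^{\ell,m}+\tau\Delta L_n^{\ell,m}$. So the pair $(\sqrt{\lambda_\ell}u_{1,n+1}^{\ell,m},u_{2,n+1}^{\ell,m})$ equals $(1+\tau^2\lambda_\ell)^{-1}$ times the image of $(\sqrt{\lambda_\ell}u_{1,n}^{\ell,m},u_{2,n}^{\ell,m})$ under the matrix $\begin{pmatrix}1 & \tau\sqrt{\lambda_\ell}\\ -\tau\sqrt{\lambda_\ell} & 1\end{pmatrix}$, plus the noise term $(1+\tau^2\lambda_\ell)^{-1}(\tau\sqrt{\lambda_\ell},1)\Delta L_n^{\ell,m}$. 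The matrix is $\sqrt{1+\tau^2\lambda_\ell}$ times a rotation. Since $\Delta L_n^{\ell,m}$ is independent of $\mathcal F_{t_n}$ and has mean zero, the cross terms vanish in expectation, and with $\E[(\Delta L_n^{\ell,m})^2]=v_{\ell,m}\tau$ we obtain the exact recursion
\begin{equation*}
\E\left[\cE_{n+1}^{\ell,m}\right]=\frac{1}{1+\tau^2\lambda_\ell}\E\left[\cE_{n}^{\ell,m}\right]+\frac{v_{\ell,m}\tau}{2(1+\tau^2\lambda_\ell)}.
\end{equation*}
This computation mirrors~\eqref{energy0EM}, with the damping factor $(1+\tau^2\lambda_\ell)^{-1}$ replacing the amplification factor.

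For $\ell=0$ we have $\lambda_0=0$. The recursion reduces to $u_{2,n+1}^{0,0}=u_{2,n}^{0,0}+\Delta L_n^{0,0}$, and $\cE^{0,0}_n=\frac12(u_{2,n}^{0,0})^2$, hence $\E[\cE^{0,0}_n]=\E[\cE^{0,0}_0]+\frac{t_n}{2}v_{0,0}$ exactly. For $\ell\geq1$, write $r_\ell=(1+\tau^2\lambda_\ell)^{-1}<1$ and solve the affine recursion:
\begin{equation*}
\E\left[\cE_n^{\ell,m}\right]=r_\ell^n\E\left[\cE_0^{\ell,m}\right]+\frac{v_{\ell,m}\tau}{2}\,r_\ell\,\frac{1-r_\ell^n}{1-r_\ell}.
\end{equation*}
Since $r_\ell/(1-r_\ell)=1/(\tau^2\lambda_\ell)$, the second term is bounded by $\frac{v_{\ell,m}}{2\tau\lambda_\ell}(1-r_\ell^n)\le \frac{v_{\ell,m}}{2\tau}$, because $\lambda_\ell\geq2$. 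Summing over all modes and using $\tr(Q^\kappa)=\sum_{\ell\le\kappa}\sum_m v_{\ell,m}$ yields
\begin{equation*}
\E[\cE^\kappa_n]\le \E[\cE^{0,0}_0]+\sum_{\ell\geq1,m}r_\ell^n\E[\cE_0^{\ell,m}]+\frac{1}{2\tau}\sum_{\ell\ge1,m}v_{\ell,m}+\frac{t_n}{2}v_{0,0}.
\end{equation*}

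For the strict inequality in the statement, use $r_\ell^n\le1$ and note that $\frac{1}{2\tau}\sum_{\ell\ge1}v_{\ell,m}\le\frac{1}{2\tau}\tr(Q^\kappa)$. Strictness comes from the bound $1/\lambda_\ell\le 1/2<1$ applied to any mode with $v_{\ell,m}>0$. In the degenerate case where all $v_{\ell,m}=0$ for $\ell\ge1$, strictness instead follows from the $v_{0,0}/(2\tau)$ slack if $v_{0,0}>0$, or from $r_\ell^n<1$ together with $\E[\cE_0^\kappa]>0$ when $n\ge1$. This bookkeeping of strictness, including the $n=0$ case, where one needs $\tr(Q^\kappa)>0$, is the only delicate point and will be handled by a short case check. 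The limit~\eqref{eqn:SWE_energy_BEM_limit} then follows by dividing the exact expression by $t_n$: every $\ell\ge1$ contribution is bounded uniformly in $n$, so it vanishes after division, while the $\ell=0$ contribution gives exactly $\frac12 v_{0,0}$.
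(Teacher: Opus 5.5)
Your argument is correct and is essentially the paper's proof. Both use the mode-wise identity $\E[\cE^{\ell,m}_{n+1}]=(1+\tau^2\lambda_\ell)^{-1}\bigl(\E[\cE^{\ell,m}_n]+\tfrac12 v_{\ell,m}\tau\bigr)$ for $\ell\geq1$, a geometric-sum bound by $v_{\ell,m}/(2\tau)$, and exact linear growth of the $(0,0)$ mode. Your version is more careful than the paper's in two places. You keep $\E[\cE^{0,0}_0]$ undamped, whereas the paper's intermediate bound $(1+\tau^2)^{-n}\E[\cE^\kappa_0]+\dots$ wrongly damps it. You also actually derive the limit, which the paper leaves implicit.

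The only slip is in your degenerate strictness case. When $\tr(Q^\kappa)=0$, strictness requires $n\geq1$ and $\E[\cE^\kappa_0-\cE^{0,0}_0]>0$, not merely $\E[\cE^\kappa_0]>0$. If all the initial energy sits in the $(0,0)$ mode, then $\E[\cE^\kappa_n]=\E[\cE^\kappa_0]$ and the strict inequality is false. No case check can repair this; it is a defect of the statement itself, which the paper's proof shares, and strictness really needs an extra hypothesis such as $\tr(Q^\kappa)>0$.
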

\begin{proof}	
	Let $\ell\geq 1$. As in the proof of Proposition~\ref{prop:energy-em}, we substitute \eqref{eqn:SWE_BEM2} in the $(\ell, m)$ mode of the energy and obtain:
	\begin{align*}
		\E\left[\cE^{\ell, m}_{n+1}\right] &= \frac{1}{2}\E\left[\left(u^{\ell, m}_{2, n+1}\right)^2 + \lambda_\ell \left(u^{\ell, m}_{1, n+1}\right)^2\right]=\left(1 + \tau^2\lambda_\ell\right)^{-1} \left(\E\left[\cE^{\ell, m}_{n}\right] + \frac{1}2v_{\ell, m}\tau\right).
	\end{align*}
	We again observe that for the mode $\ell = 0$, the energy is preserved exactly.
	However, for all other modes, energy is lost for the backward Euler--Maruyama scheme.
	
	In particular, we have that
	\[
	\E\left[\cE^{\ell, m}_{n+1}\right] \leq \left(1 + \tau^2\right)^{-1} \left(\E\left[\cE^{\ell, m}_{n}\right] + \frac{1}2v_{\ell, m}\tau\right).
	\]
	Iterating this for the $n$--th increment implies
	\begin{align*}
		\E\left[\cE^{\ell, m}_{n}\right] &\leq \left(1+\tau^2\right)^{-n} \E\left[\cE^{\ell, m}_{0}\right] + \sum_{k=1}^{n}\left(1+\tau^2\right)^{-k}\frac{1}{2}v_{\ell, m}\tau\\
		& \leq (1+\tau^2)^{-n} \E\left[\cE^{\ell, m}_{0}\right]  + \frac{\tau}{2}v_{\ell,m} \left(\frac{1-\left(1 + \tau^2\right)^{-n} }{\tau^2}\right)\\
		& \leq (1+\tau^2)^{-n} \E\left[\cE^{\ell, m}_{0}\right]  + \frac{1}{2\tau}v_{\ell,m}.
	\end{align*}
	
	Summing across all modes, this gives directly the evolution of the expected energy:
	\begin{align}\label{bem:bound}
		\E\left[\cE^\kappa_n\right] &\leq \left(1+\tau^2\right)^{-n} \E\left[\cE^\kappa_0 - \cE_0^{0, 0}\right]+ \frac{1}{2\tau}\tr\left(Q^\kappa\right) + \E\left[\cE_n^{0,0}\right],
	\end{align}
    where we note that $\tr\left(Q^\kappa\right)=\sum_{\ell=0}^{\kappa}\sum_{m=-\ell}^{\ell} v_{\ell,m}$.
	This clearly shows that the energy of the numerical solution grows more slowly than the energy of the true solution. 

    Moreover, one observes that the inequality \eqref{bem:bound} implies the estimates
	\begin{equation*}
		\E\left[\cE^\kappa_n\right] \leq (1+\tau^2)^{-n} \E\left[\cE^\kappa_0\right]+ \frac{1}{2\tau}\tr\left(Q^\kappa\right) + \frac {t_n} {2} v_{0,0} < \E\left[\cE^\kappa_0\right]+ \frac{1}{2\tau}\tr\left(Q^\kappa\right) + \frac {t_n} {2} v_{0,0}.
	\end{equation*}
	This concludes the proof of the proposition.
\end{proof}

\subsection{Stochastic trigonometric method}
In the context of the stochastic wave equation on the sphere~\eqref{swe}, the exponential Euler scheme~\eqref{expEM} is also known as the stochastic trigonometric scheme. This explicit numerical scheme reads 
\begin{align}
\label{eqn:SWE_trigo}
\begin{split}
	u_{1,n+1}^\kappa&=\cos(\tau(-\Deltalb^\kappa)^{1/2})u_{1,n}^\kappa+(-\Deltalb^\kappa)^{-1/2}\sin(\tau(-\Deltalb^\kappa)^{1/2})u_{2,n}^\kappa\\
	&\quad+(-\Deltalb^\kappa)^{-1/2}\sin(\tau(-\Deltalb^\kappa)^{1/2})\cP_\kappa\Delta L_n,\\
    u_{2,n+1}^\kappa&=-(-\Deltalb^\kappa)^{1/2}\sin(\tau(-\Deltalb^\kappa)^{1/2})u_{1,n}^\kappa+\cos(\tau(-\Deltalb^\kappa)^{1/2})u_{2,n}^\kappa\\
    &\quad+\cos(\tau(-\Deltalb^\kappa)^{1/2})\cP_\kappa\Delta L_n.
\end{split}    
\end{align}

Unlike both previously studied Euler--Maruyama schemes, the stochastic trigonometric scheme~\eqref{eqn:SWE_trigo} has the same long-time behavior with respect to the energy as the exact solution to the stochastic wave equation on the sphere~\eqref{swe}.
\begin{proposition}\label{prop:energy-trigo}
Consider the stochastic wave equation on the sphere~\eqref{swe} with initial values $(u(0),\partial_tu(0))=(v_1,v_2)\in [L^2(\IS^2,\R)]^2$.
Assume that the expected value of the initial energy $\E\left[\mathcal{E}(0)\right]<\infty$ and that the $Q$-L\'evy process $L$ is trace-class.
Let $(u_{1,n}^\kappa, u_{2,n}^\kappa)$ denote the fully-discrete approximation given by the stochastic trigonometric integrator~\eqref{eqn:SWE_trigo}.
This fully-discrete approximation satisfies the trace formula for the energy:
\begin{align*}
\E\left[\mathcal{E}^\kappa_n\right]=\E\left[\mathcal{E}^\kappa_0\right]+\frac{t_n}2\tr(Q^\kappa),
\end{align*}
for every discrete times $t_n=n\tau$ with integers $n\geq0$.
\end{proposition}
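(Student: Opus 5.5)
The plan is to follow the mode-by-mode approach already used for the forward and backward Euler--Maruyama schemes in Propositions~\ref{prop:energy-em} and~\ref{prop:energy-bem}, using the decomposition~\eqref{energEM}. The new ingredient is that one step of the trigonometric scheme~\eqref{eqn:SWE_trigo} acts on each mode as an exact rotation of the energy variables. We keep the standing assumption $\E\left[L(t)\right]=0$.

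\emph{Modal recursion.} Expanding in spherical harmonics and writing $\omega_\ell=\sqrt{\lambda_\ell}$, the scheme~\eqref{eqn:SWE_trigo} becomes, for $\ell\geq1$,
\begin{align*}
\omega_\ell u_{1,n+1}^{\ell,m}&=\cos(\tau\omega_\ell)\,\omega_\ell u_{1,n}^{\ell,m}+\sin(\tau\omega_\ell)\bigl(u_{2,n}^{\ell,m}+\Delta L_n^{\ell,m}\bigr),\\
u_{2,n+1}^{\ell,m}&=-\sin(\tau\omega_\ell)\,\omega_\ell u_{1,n}^{\ell,m}+\cos(\tau\omega_\ell)\bigl(u_{2,n}^{\ell,m}+\Delta L_n^{\ell,m}\bigr).
\end{align*}
Thus the vector $(\omega_\ell u_{1,n+1}^{\ell,m},u_{2,n+1}^{\ell,m})$ is a rotation by the angle $\tau\omega_\ell$ of the vector $(\omega_\ell u_{1,n}^{\ell,m},u_{2,n}^{\ell,m}+\Delta L_n^{\ell,m})$. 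Since rotations preserve Euclidean length, this gives the pathwise identity
\begin{equation*}
\mathcal{E}^{\ell,m}_{n+1}=\tfrac12\Bigl(\lambda_\ell\bigl(u_{1,n}^{\ell,m}\bigr)^2+\bigl(u_{2,n}^{\ell,m}+\Delta L_n^{\ell,m}\bigr)^2\Bigr)=\mathcal{E}^{\ell,m}_n+u_{2,n}^{\ell,m}\Delta L_n^{\ell,m}+\tfrac12\bigl(\Delta L_n^{\ell,m}\bigr)^2 .
\end{equation*}

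\emph{Mode $\ell=0$.} Here $\lambda_0=0$, so we use the natural convention $(-\Deltalb^\kappa)^{-1/2}\sin(\tau(-\Deltalb^\kappa)^{1/2})=\tau$ on this mode, which is the limit as $\omega\to0$. The second equation then reads $u_{2,n+1}^{0,0}=u_{2,n}^{0,0}+\Delta L_n^{0,0}$. Since $\mathcal{E}^{0,0}_n=\frac12(u_{2,n}^{0,0})^2$, the same identity as above holds for $\ell=0$.

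\emph{Taking expectations.} The coefficient $u_{2,n}^{\ell,m}$ is $\mathcal F_{t_n}$-measurable and square integrable, because $\E\left[\mathcal{E}^\kappa_0\right]<\infty$ and, inductively, each step adds square-integrable increments. The increment $\Delta L_n^{\ell,m}$ is independent of $\mathcal F_{t_n}$ and has mean zero. Hence the cross term has zero expectation. By stationarity of increments and~\eqref{Lvlm}, $\E\bigl[(\Delta L_n^{\ell,m})^2\bigr]=v_{\ell,m}\tau$. Therefore
\begin{equation*}
\E\left[\mathcal{E}^{\ell,m}_{n+1}\right]=\E\left[\mathcal{E}^{\ell,m}_n\right]+\tfrac12 v_{\ell,m}\tau .
\end{equation*}
Summing over $\ell=0,\ldots,\kappa$ and $m=-\ell,\ldots,\ell$, and using $\tr(Q^\kappa)=\sum_{\ell\leq\kappa}\sum_m v_{\ell,m}$, gives $\E\left[\mathcal{E}^\kappa_{n+1}\right]=\E\left[\mathcal{E}^\kappa_n\right]+\frac{\tau}{2}\tr(Q^\kappa)$. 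A telescoping induction over $n$ then yields the claimed formula with $t_n=n\tau$.

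\emph{Expected difficulty.} The computation is essentially routine. The points needing care are the $\ell=0$ mode, where the operator $(-\Deltalb^\kappa)^{-1/2}$ must be given meaning as described, and the justification that the cross term $u_{2,n}^{\ell,m}\Delta L_n^{\ell,m}$ vanishes in expectation. The latter relies on the independent-increment property of $L$ and on the zero-mean assumption. If the mean were nonzero, this cross term would not vanish and extra terms would appear.
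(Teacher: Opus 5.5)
Your proof is correct. It reaches the trace formula by a somewhat different route than the paper.

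The paper works at the operator level, mirroring the proofs of Propositions~\ref{prop:energy-exact} and~\ref{prop:energy-spectral}. It rewrites the two noise terms $\cos(\tau(-\Deltalb^\kappa)^{1/2})\cP_\kappa\Delta L_n$ and $\sin(\tau(-\Deltalb^\kappa)^{1/2})\cP_\kappa\Delta L_n$ as stochastic integrals over $[t_n,t_{n+1}]$. It then computes their second moments separately via It\^o's isometry in Hilbert--Schmidt norm and recombines them through $\sin^2+\cos^2=I$. The cross terms between the deterministic part and the noise are left implicit.

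You instead note that one step is $e^{A^\kappa\tau}$ applied to $(u_{1,n}^\kappa,u_{2,n}^\kappa+\cP_\kappa\Delta L_n)$. This propagator acts on each mode as a rotation in the variables $(\sqrt{\lambda_\ell}\,u_{1}^{\ell,m},u_{2}^{\ell,m})$, which gives the pathwise identity $\cE^{\ell,m}_{n+1}=\cE^{\ell,m}_n+u_{2,n}^{\ell,m}\Delta L_n^{\ell,m}+\tfrac12(\Delta L_n^{\ell,m})^2$. After that you only need independence of increments, the zero mean, and $\E[(\Delta L_n^{\ell,m})^2]=v_{\ell,m}\tau$; no It\^o isometry is required.

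Your version buys three things:
\begin{itemize}
\item It makes explicit why the cross term vanishes and exactly where the zero-mean assumption enters.
\item It handles the mode $\ell=0$ explicitly.
\item It sits naturally beside the proofs of Propositions~\ref{prop:energy-em} and~\ref{prop:energy-bem}: the per-mode amplification factor of the expected energy is exactly $1$ here, versus $1+\lambda_\ell\tau^2$ and $(1+\lambda_\ell\tau^2)^{-1}$ for the Euler--Maruyama schemes.
\end{itemize}
The paper's version is coordinate-free and shows that the exact, semi-discrete and fully discrete trace formulas come from one and the same computation.

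One small point: the finiteness of $\E[\cE^\kappa_0]$ that you use should be derived from the hypothesis $\E[\cE(0)]<\infty$. This holds since $\cP_\kappa$ is a contraction commuting with $\Deltalb$, so $\cE^\kappa_0\leq\cE(0)$.
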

\begin{proof}
Let us first observe that the stochastic part of the fully-discrete numerical solution~\eqref{eqn:SWE_trigo} can be written as a stochastic 
integral:
$$
\cos(\tau(-\Deltalb^\kappa)^{1/2})\cP_\kappa\Delta L_{n}=
\int_{t_{n}}^{t_{n+1}}\cos(\tau(-\Deltalb^\kappa)^{1/2})\cP_\kappa\dd L(s)
$$
and similarly for the other term. An application of It\^o's isometry then gives 
$$
\E\left[\norm{\cos(\tau(-\Deltalb^\kappa)^{1/2})\cP_\kappa\Delta L_{n}}^2\right]=
\int_{t_{n}}^{t_{n+1}}\norm{\cos(\tau(-\Deltalb^\kappa)^{1/2})\cP_\kappa Q^{1/2}}_{HS}^2\dd s.
$$
Next, we insert the numerical solution and use the above into the energy~\eqref{swe-full-energy}.
Finally, one uses trigonometric identities and gets the relation
\begin{align*}
2\E\left[\cE^\kappa_{n+1}\right] &= 2\E\left[\cE^\kappa_{n}\right] +\int_{t_{n}}^{t_{n+1}}\left(\norm{\sin(\tau(-\Deltalb^\kappa)^{1/2})\cP_\kappa Q^{1/2}}^2_{HS}+\norm{\cos(\tau(-\Deltalb^\kappa)^{1/2})\cP_\kappa Q^{1/2}}_{HS}^2\right)\dd s\\
&=2\E\left[\cE^\kappa_n\right] +\int_{t_{n}}^{t_{n+1}}\norm{\cP_\kappa  Q^{1/2}\cP_\kappa}^2_{HS}\dd s= 2\E\left[\cE^\kappa_n\right]+\tau\ \tr(Q^\kappa).
\end{align*}
A recursion on the index $n$ concludes the proof.
\end{proof}

\subsection{Numerical experiments}
We proceed with some numerical experiments illustrating the above theoretical results.

Let us first illustrate the behavior of the Euler--Maruyama schemes on a short time interval. To do this,
we consider the linear stochastic wave equation on the sphere~\eqref{swe} on the time interval $[0,T]$ with $T=3$.
As initial values, we choose the Gaussian random fields
$$
v_1 = \sum_{\ell=0}^{\kappa} \sum_{m=-\ell}^{\ell} \ell^{-\gamma} u_{\ell , m} Y_{\ell , m}, \quad v_2 = \sum_{\ell=0}^{\kappa} \sum_{m=-\ell}^{\ell} \ell^{-\gamma+1} z_{\ell , m} Y_{\ell , m},
$$
where $\gamma=3+10^{-5}$ (using the convention $0^{-x}=1$ for $x>0$), and where $u_{\ell , m}$ and $z_{\ell , m}$ are iid standard Gaussian random variables. For the L\' evy noise, we choose
$$
L = \sum_{\ell=0}^{\kappa} \sum_{m=-\ell}^{\ell} A_\ell L_{\ell, m} Y_{\ell , m},
$$
for independent Lévy processes $L_{\ell,m}$, given by $L_{\ell, m}(t) = \left(W_{\ell, m}(t) + P_{\ell, m}(t) - t\right) / \sqrt{2}$.
That is, $L_{\ell, m}$ is a linear combination of a standard Brownian motion $W_{\ell, m}$ and a compensated Poisson process $P_{\ell, m}(t) - t$ that is independent of $W_{\ell, m}$.
This choice implies by \cite{marinucciRandomFieldsSphere2011a} that $Q^\kappa$, the covariance operator of the truncated noise, is the covariance operator of an isotropic random field.
Furthermore, we choose $A_0 = 1$ and $A_\ell = \ell^{-4}$ for $\ell = 1, \dots, \kappa$.
We use the spectral method~\eqref{spectralsol}
with truncation index $\kappa=2^6$ and take $N=500$ steps of each time integrators. The expectation of the energy is approximated using $M=10000$ Monte Carlo samples.
The results of this fist numerical experiment are presented in Figure~\ref{fig:swe-energy1}.
In this figure, one can observe the exponential increase in the expected energy of the forward Euler--Maruyama scheme~\eqref{eqn:SWE_EM}
as shown in Proposition~\ref{prop:energy-em}. Furthermore, the damping in the expected energy of the backward Euler--Maruyama
scheme~\eqref{eqn:SWE_BEM} as described in Proposition~\ref{prop:energy-bem} is observed. Finally, the correct behavior, in term
of the expected energy, of the stochastic trigonometric scheme~\eqref{eqn:SWE_trigo}, proved in Proposition~\ref{prop:energy-trigo}, is seen.

\begin{figure}[h!]
    \centering
    \begin{subfigure}[t]{0.5\textwidth}
        \centering
        \includegraphics[width=\textwidth]{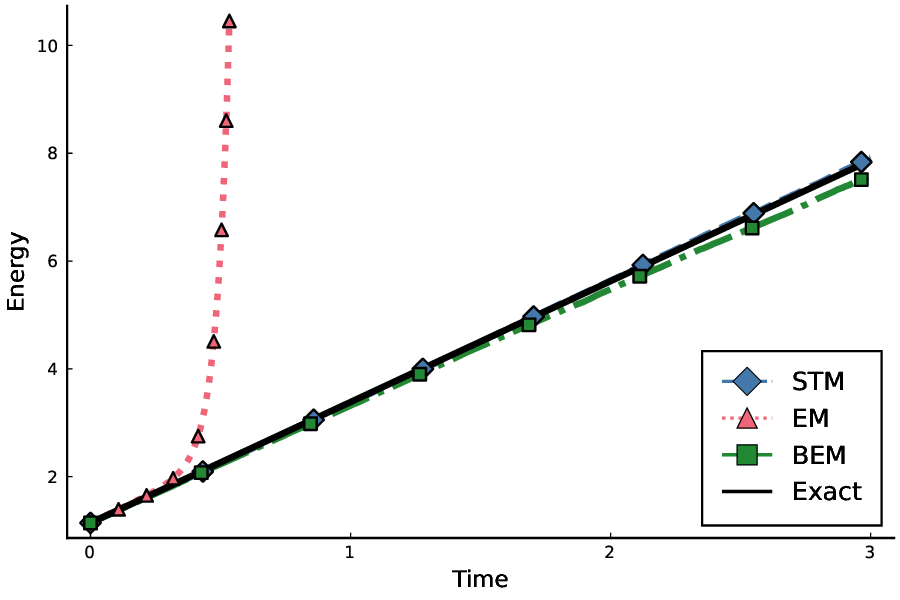}
        \caption{$T=3$}
        \label{fig:swe-energy1}
    \end{subfigure}%
    \begin{subfigure}[t]{0.5\textwidth}
        \centering
        \includegraphics[width=\textwidth]{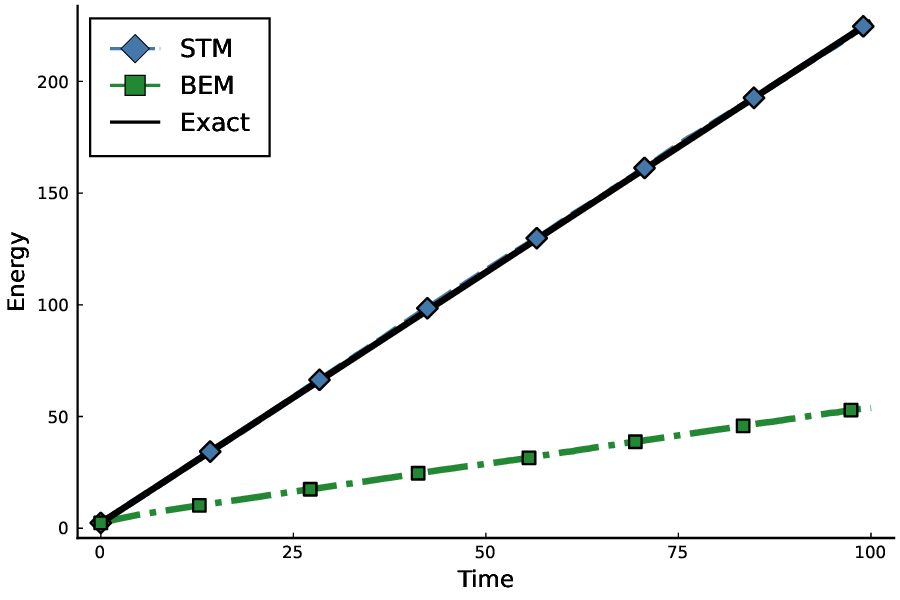}
        \caption{$T=100$}
        \label{fig:swe-energy2}
    \end{subfigure}
    \caption{Expected energy of the stochastic wave equation on the sphere~\eqref{swe}: The stochastic trigonometric scheme~\eqref{eqn:SWE_trigo} ({\upshape STM}),
    the forward Euler--Maruyama scheme~\eqref{eqn:SWE_EM} ({\upshape EM}), and the backward Euler--Maruyama scheme~\eqref{eqn:SWE_BEM} ({\upshape BEM}).}
    \label{fig:swe-energy}
\end{figure}

In order to illustrate the long-time behavior of the expected energy for the different time integrators,
we perform a simulation with final time $T=100$. All other parameters are kept the same, in particular we keep $N = 500$,
so that the time step is $\tau = 0.2$. Figure~\ref{fig:swe-energy2} shows the result of this numerical simulation.
Due to the poor long-time behavior, with respect to the expected energy, of the forward Euler--Maruyama scheme,
we do not display it in this figure. The stochastic trigonometric scheme behaves correctly,
with respect to the behavior of the expected energy, even for this long-time simulation.
For the backward Euler--Maruyama scheme, Figure~\ref{fig:swe-energy2} shows how the expected energy approaches a linear function with a lower slope than that of the exact expected energy, illustrating equation~\eqref{eqn:SWE_energy_BEM_limit}.

We conclude this section with a comment on the case when the mean of the L\'evy process in the SPDE~\eqref{swe} is not zero.

\begin{remark}
  If we loosen the requirement of considering only mean-zero L\'evy processes, some care needs to be taken to adapt the stochastic trigonometric scheme to the altered behavior of the energy of the true solution. A direct computation, analogous to that in the proof of Proposition~\ref{prop:energy-exact}, yields the following behavior of the energy of the exact solution:
\allowdisplaybreaks
\begin{align}
    \begin{split}
    \label{eqn:swe_energy_nonzero_mean}
    2 \E\left[\cE(t_{n+1})\right]
    & = 2\E\left[\cE(t_n)\right] + \tau\, \tr(Q) + 2 \norm{(-\Deltalb)^{-1/2}\mean }^2 \\
    & - 2\langle {(-\Deltalb)^{-1/2}}\cos(\tau(-\Deltalb)^{1/2})\mean , {(-\Deltalb)^{-1/2}}\mean  \rangle\\
    & - 2 \langle \E\left[{u_1(t_n)}\right], \mean  \rangle
    + 2 \langle \cos(\tau(-\Deltalb)^{1/2})\E\left[{u_1(t_n)}\right], \mean \rangle \\
    & + 2 \langle \sin(\tau(-\Deltalb)^{1/2})\E\left[{u_2(t_n)}\right], {(-\Deltalb)^{-1/2}}\mean \rangle.
    \end{split}
\end{align}

This behavior of the energy can be reproduced by an adapted stochastic trigonometric integrator that we now present.
In order to treat the drift term exactly, we write  $L(t) = L(t) - \mean t + \mean t$ and split the stochastic integral in the mild solution as
\begin{align*}
    \int_{t_n}^{t_{n+1}}S(t_{n+1}-s)\dd L(s) &= \int_{t_n}^{t_{n+1}}S(t_{n+1}-s)\dd(L(s) - \mean s)
    + \int_{t_n}^{t_{n+1}}S(t_{n+1}-s) \mean \dd s.
\end{align*}
Integrating the second deterministic integral exactly and treating the remaining terms as for the
stochastic trigonometric scheme~\eqref{eqn:SWE_trigo}, we obtain the following adapted stochastic trigonometric scheme:
\begin{align*}
    u_{1,n+1}^\kappa&=\cos(\tau(-\Deltalb)^{1/2})u_{1,n}^\kappa
    +(-\Deltalb)^{-1/2}\sin(\tau(-\Deltalb)^{1/2})u_{2,n}^\kappa\\
    & \quad+(-\Deltalb)^{-1/2}\sin(\tau(-\Deltalb)^{1/2})(\Delta L^\kappa_n - \mean^\kappa\tau)
     + (-\Deltalb)^{-1/2}\left(1 - \cos(\tau(-\Deltalb)^{1/2})\right)\mean^\kappa,\\
    u_{2,n+1}^\kappa&=-(-\Deltalb)^{1/2}\sin(\tau(-\Deltalb)^{1/2})u_{1,n}^\kappa
    +\cos(\tau(-\Deltalb)^{1/2})u_{2,n}^\kappa\\
    & \quad+\cos(\tau(-\Deltalb)^{1/2})(\Delta L^\kappa_n - \mean^\kappa\tau)
     + (-\Deltalb)^{-1/2}\sin(\tau(-\Deltalb)^{1/2})\mean^\kappa.
\end{align*}

A computation analogous to the proof of Proposition~\ref{prop:energy-trigo} yields that this scheme preserves the long-term behavior
of energy of the exact solution:
\begin{align*}
    \E\left[\cE^\kappa_{n+1}\right] &= \E\left[\cE^\kappa_n\right] + \frac{\tau}{2}\tr(Q^\kappa) + \norm{(-\Deltalb)^{-1/2}\mean^\kappa}^2 \\
    & \quad - \langle(-\Deltalb)^{-1/2} \cos(\tau(-\Deltalb)^{1/2}) \mean^\kappa, (-\Deltalb)^{-1/2} \mean^\kappa\rangle - \langle \E\left[{u_{1,n}^\kappa}\right], \mean^\kappa \rangle\\
    & \quad
    + \langle \cos(\tau(-\Deltalb)^{1/2})\E\left[{u_{1,n}^\kappa}\right], \mean^\kappa \rangle + \langle \sin(\tau(-\Deltalb)^{1/2})\E\left[{u_{2,n}^\kappa}\right], {(-\Deltalb)^{-1/2}}\mean^\kappa\rangle
\end{align*}
Direct computation shows that
\begin{equation*}
    \E\left[u^\kappa_{i, n}\right]  = \E\left[u^\kappa_i(t_{n})\right], \quad i = 1, 2,
\end{equation*}
and therefore, by comparison with equation~\eqref{eqn:swe_energy_nonzero_mean}, we obtain
\begin{equation*}
    \E\left[\cE^\kappa_{n+1}\right]  = \E\left[\cE^\kappa(t_{n+1})\right].
\end{equation*}

We conclude this remark by numerically illustrating the long-time behavior, in terms of the energy, of the adapted stochastic trigonometric integrator in Figure~\ref{fig:SWE_energy_nonzero_mean}. For this numerical experiment, all parameters are chosen as in the first experiment
besides choosing a non-compensated Poisson process instead, i.\,e., for all $\ell = 0, \dots, \kappa, m=-\ell, \dots, \ell$, we take
$L_{\ell , m} = \left(W_{\ell, m} + P_{\ell, m}\right)/\sqrt{2}$, where we recall that $W_{\ell, m}$ is a standard Brownian motion
and $P_{\ell, m}$ a Poisson process. In Figure~\ref{fig:swe-energy1_nonzero_mean}, one can observe that the oscillations of the exact energy, given by the additional terms in equation~\eqref{eqn:swe_energy_nonzero_mean}, are recovered by the adapted stochastic trigonometric integrator.
The forward Euler--Maruyama scheme exhibits exponential growth in the energy while the backward Euler--Maruyama scheme underestimates the energy.
In Figure~\ref{fig:swe-energy2_nonzero_mean}, we observe that the adapted stochastic trigonometric scheme retains the correct long-time behavior, regarding the evolution of the energy, as opposed to the backward Euler--Maruyama scheme which fails to do so.

\begin{figure}[h!]
    \centering
    \begin{subfigure}[t]{0.5\textwidth}
        \centering
        \includegraphics[width=\textwidth]{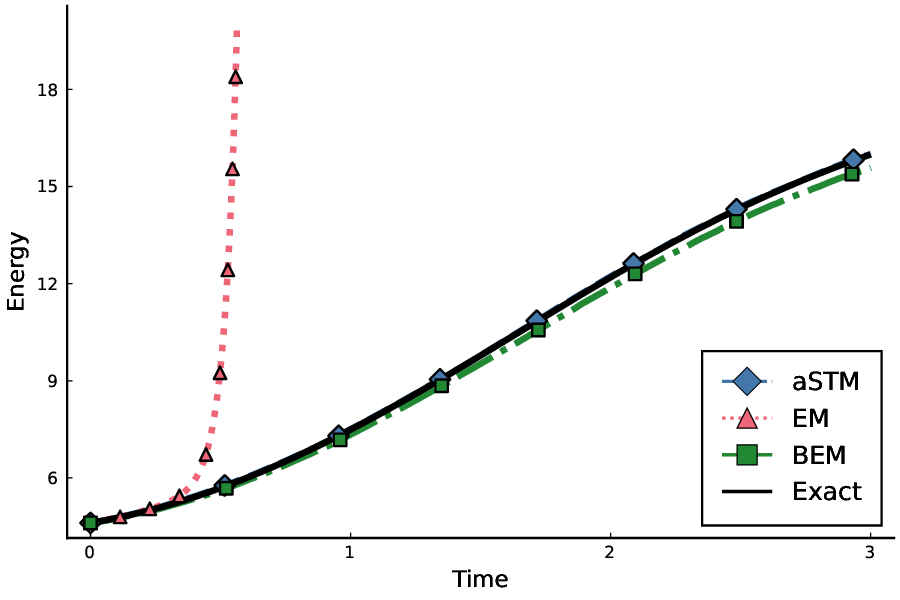}
        \caption{$T=3$}
        \label{fig:swe-energy1_nonzero_mean}
    \end{subfigure}%
    \begin{subfigure}[t]{0.5\textwidth}
        \centering
        \includegraphics[width=\textwidth]{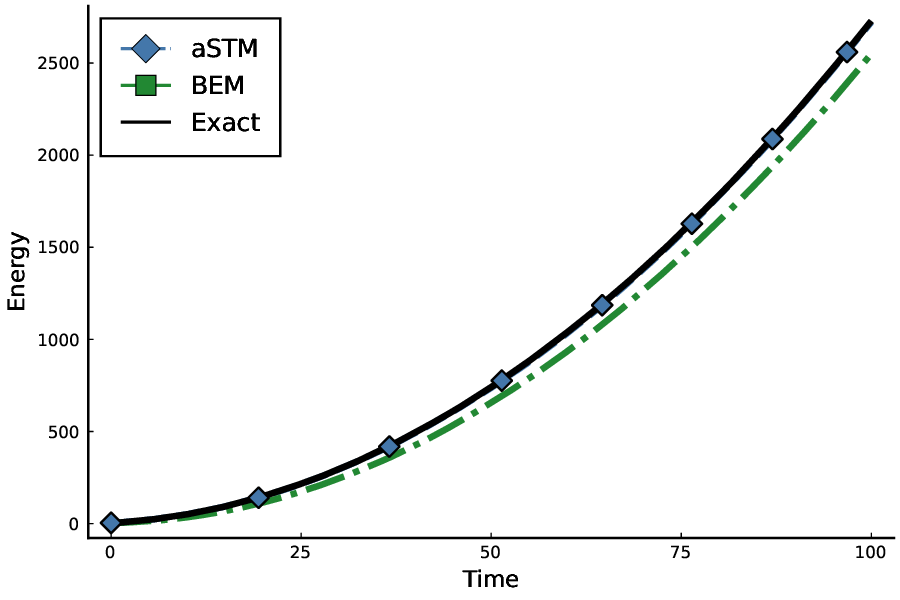}
        \caption{$T=100$}
        \label{fig:swe-energy2_nonzero_mean}
    \end{subfigure}
    \caption{Expected energy of the stochastic wave equation on the sphere~\eqref{swe} driven by a Lévy process with nonzero mean: The adapted stochastic trigonometric scheme~\eqref{eqn:SWE_trigo} ({\upshape aSTM}),
    the forward Euler--Maruyama scheme~\eqref{eqn:SWE_EM} ({\upshape EM}), and the backward Euler--Maruyama scheme~\eqref{eqn:SWE_BEM} ({\upshape BEM}).}
    \label{fig:SWE_energy_nonzero_mean}
\end{figure}
\end{remark}

\section{Stochastic Schr\"odinger equation on the sphere}\label{sect-sch}
\begin{toappendix}
  \label{apx:Schrödinger}
\end{toappendix}
It is well-known that the deterministic free Schr\"odinger equation on an interval has the
mass, energy, and momentum as conserved quantities,
see, e.\,g., \cite{carlesSemiclassicalAnalysisNonlinear2008,lubichQuantumClassicalMolecular2008}.
In this section, inspired by the previous section and the work \cite{MR3771721} for $Q$-Wiener noise in the Euclidean setting, we investigate the long-time behavior of the exact and numerical solutions with respect to these quantities for the stochastic Schr\"odinger equation~\eqref{eqn:SSE} driven by a (possibly complex-valued) trace-class $Q$-L\'evy process $L$ on ${\IS^{2}}$ with mean zero:
\begin{equation*}
	i \partial_t u(t) = \Deltalb u(t) + \dot{L}(t).
\end{equation*}
We follow the same structure as in Section~\ref{sect-swe} and start our study in Subsection~\ref{sch-exact} with the exact solution of~\eqref{eqn:SSE}. The behavior of the spectral and time discretizations is presented in Subsections~\ref{sch-spectral}~and~\ref{sch-time}. The obtained results are then numerically confirmed in Subsection~\ref{sch-num}. Note that the proofs of our results
are conceptually similar to those in Section~\ref{sect-swe}, however in some cases more technically involved.
Therefore, we defer all the proofs to Appendix~\ref{apx:Schrödinger}.

\subsection{Exact solution}\label{sch-exact}
The mild solution~\eqref{mild_s} to the stochastic Schr\"odinger equation on the sphere~\eqref{eqn:SSE} is given by
\begin{equation}
    \label{eqn:SSE_mildsol}
    u(t) = \expp{-it\Deltalb}u_0 - i \int_{0}^{t}\expp{-i(t-s)\Deltalb}\dd L(s).
\end{equation}
Its semigroup $\expp{-it\Deltalb}$ is a semigroup of unitary operators and, therefore, an isometry:
\begin{equation}
    \label{eqn:SSE_SG_isometry}
    \norm{\expp{-it\Deltalb}x} = \norm{x}
\end{equation} 
for all $t \geq 0, x \in {L^2(\IS^2; \IC)}$, where in this section, we use the notation $\norm{\cdot} = \norm{\cdot}_{L^2(\IS^2; \IC)}$ and, correspondingly, $\langle \cdot, \cdot \rangle =\langle \cdot, \cdot \rangle_{L^2({\IS^{2}}; \IC)}$.

The mass, energy, and momentum of the stochastic Schr\"odinger equation on the sphere~\eqref{eqn:SSE} are given by
\begin{align}
    \cM(t) &= \norm{u(t)}^2,\label{eqn:def_mass}\\
    \cE(t) &= \norm{\nablalb u(t)}_{L^2(\IS^2; \IC^3)}^2 = \int_{\IS^{2}} \norm{\nablalb u(t, x)}_{\IC^3}^2 \dd x, \label{eqn:def_energy}\\
    p(u(t)) &= i\int_{\IS^{2}} u(t,x)\nablalb \overline{u}(t,x) - \overline{u}(t,x)\nablalb u(t,x) \dd x.\label{eqn:def_mom}
\end{align}
For ease of notation, we define the following vector of Hilbert--Schmidt inner products:
Given the spaces $\mathcal{H}=L^2({\IS^{2}}; \IC)$ and $\mathcal{H}^d = L^2({\IS^{2}}, \IC^d)$, and
two trace-class operators $A\colon \mathcal{H} \to \mathcal{H}$ and $B\colon \mathcal{H} \to \mathcal{H}^d$, we define
$$
\left<\left<A,B\right>\right>_{HS}=(\left<A,B_i\right>_{HS})_{i=1}^d\in\mathbb{C}^d,
$$
where $B_i v = (Bv)_i$ is the $i$-th component of $Bv$ for all $v \in \mathcal{H}$.

The following proposition shows the long-time behavior of the expected mass, energy, and momentum for
the exact solution to the stochastic Schrödinger equation on the sphere~\eqref{eqn:SSE}.

\begin{proposition}
    \label{prop:exact}
    Consider the stochastic Schrödinger equation on the sphere~\eqref{eqn:SSE}
    with a (possibly complex-valued) $Q$-Lévy process $L$ which is of trace-class.

    \begin{enumerate}[label=\roman*)]
    \item If the initial condition $u_0 \in {L^2(\IS^2; \IC)}$ and
    if $\E\left[\cM(0)\right] < \infty$, then the solution $u(t)$ to this SPDE satisfies the following trace formula for the mass: 
    \begin{align*}
        \E\left[\cM(t)\right] &= \E\left[\cM(0)\right] + t\,\tr(Q),
    \end{align*}
    for every time $t \geq 0$.
    \item If $u_0 \in {H^1(\IS^2; \IC)}$, $\E\left[\cE(0)\right] < \infty$, and
    $Q^{1/2} (-\Deltalb) Q^{1/2}$ is trace-class, then $u(t)$ satisfies the following trace formula for the energy:
    \begin{align*}
        \E\left[\cE(t)\right] &= \E\left[\cE(0)\right] + t\,\tr(Q^{1/2}\nablalb^\ast\nablalb Q^{1/2}) =  \E\left[\cE(0)\right] + t\,\tr(Q^{1/2} (-\Deltalb) Q^{1/2}),
    \end{align*}
    for every time $t \geq 0$.

    \item If $u_0 \in {H^1(\IS^2; \IC)}$ and $\mathbb E\left[p(u(0))\right]<\infty$ and $\left<\left<Q^{1/2},\nablalb Q^{1/2}\right>\right>_{HS}$ are finite componentwise,
	then $u(t)$ satisfies the following trace formula for the momentum:
    \begin{align*}
        \E\left[p(u(t))\right] &= \E\left[p(u(0))\right]-2t\operatorname{Im}\left(\left<\left<Q^{1/2},\nablalb Q^{1/2}\right>\right>_{HS}\right),
    \end{align*}
    for every time $t \geq 0$.

    If, in addition, $Q$ is diagonalized by the (real-valued) spherical harmonics $Y_{\ell,m}$ or $L$ is a real-valued $Q$-L\'evy process, then,
    the momentum is a conserved quantity:
	$$
	\E\left[p(u(t))\right] = \E\left[p(u(0))\right],
	$$
    for every time $t \geq 0$.
     \end{enumerate}
\end{proposition}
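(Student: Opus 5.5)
All three trace formulas follow the proof of Proposition~\ref{prop:energy-exact}. We insert the mild solution~\eqref{eqn:SSE_mildsol} into each functional and write $u(t)=\expp{-it\Deltalb}u_0+Z(t)$ with $Z(t)=-i\int_0^t\expp{-i(t-s)\Deltalb}\dd L(s)$. Since $L$ has mean zero and its increments on $[0,t]$ are independent of the $\mathcal F_0$-measurable $u_0$, we have $\E[Z(t)]=0$. Every functional considered is a (sesquilinear) quadratic form in $u$, so the cross terms between $\expp{-it\Deltalb}u_0$ and $Z(t)$ vanish in expectation. Each expected quantity therefore splits into a deterministic-flow part plus the contribution $\E[\,\cdot\,(Z(t))]$.

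For i), the isometry~\eqref{eqn:SSE_SG_isometry} gives $\E\|\expp{-it\Deltalb}u_0\|^2=\E[\cM(0)]$. It\^o's isometry gives
\[
\E\|Z(t)\|^2=\int_0^t\|\expp{-i(t-s)\Deltalb}Q^{1/2}\|_{HS}^2\dd s=t\,\tr Q,
\]
because unitary operators preserve the Hilbert--Schmidt norm.

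For ii), we use that $\nablalb$ commutes with $\expp{-it\Deltalb}$ in the sense that $\|\nablalb \expp{-it\Deltalb}x\|^2=\langle -\Deltalb \expp{-it\Deltalb}x,\expp{-it\Deltalb}x\rangle=\langle-\Deltalb x,x\rangle$. This identity follows from the spectral decomposition in spherical harmonics together with the integration by parts $\nablalb^\ast\nablalb=-\Deltalb$. It yields conservation of the deterministic part. The noise part is then
\[
\int_0^t\|\nablalb \expp{-i(t-s)\Deltalb}Q^{1/2}\|_{HS}^2\dd s=t\,\tr(Q^{1/2}\nablalb^\ast\nablalb Q^{1/2}),
\]
which is finite by the trace-class assumption on $Q^{1/2}(-\Deltalb)Q^{1/2}$.

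For iii), we write $p(u)=i(\langle \nablalb \overline{u},\overline u\rangle\text{-type terms})$. Integrating by parts on the closed manifold $\IS^2$ gives $p(u)=-2\,\mathrm{Im}\langle\!\langle\cdot\rangle\!\rangle$, concretely $p(u)=2\operatorname{Im}\int_{\IS^2}\overline{u}\,\nablalb u\,\dd x$ up to sign, written componentwise. This is a sesquilinear form $\langle \nablalb u,u\rangle$ in each component, and it commutes with the unitary group because $\nablalb$ and $\Deltalb$ commute componentwise after the spectral expansion. Applying the It\^o isometry in its polarized form for a bilinear functional, $\E\langle \nablalb_j Z,Z\rangle=\int_0^t\langle \nablalb_j S Q^{1/2},SQ^{1/2}\rangle_{HS}\dd s$, and using unitarity to cancel $S$, produces $t\langle\!\langle Q^{1/2},\nablalb Q^{1/2}\rangle\!\rangle_{HS}$. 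Taking the imaginary part with the correct factor gives the stated $-2t\operatorname{Im}(\cdot)$.

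\textbf{Main obstacle.} The delicate step is the commutation claim for the gradient with the group. Because $\nablalb$ maps scalars to tangent vectors, $\expp{-it\Deltalb}$ does not literally commute with it. We therefore argue at the level of the quadratic forms: $\langle \nablalb_j x, y\rangle=-\langle x,\mathrm{div}$-type$\rangle$. If this fails, we instead expand in spherical harmonics and use that $\int Y_{\ell,m}\nablalb Y_{\ell',m'}$ is nonzero only for $\ell,\ell'$ with matching eigenvalue structure. Failing that too, we apply It\^o's formula directly to $p(u(t))$, since $dp$ involves $\langle \nablalb \Deltalb u,u\rangle$-type terms that cancel by self-adjointness.

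For the final claim, suppose $Q$ is diagonal in the real $Y_{\ell,m}$. Then $\langle Q^{1/2},\nablalb_jQ^{1/2}\rangle_{HS}=\sum q_{\ell,m}\int Y_{\ell,m}\nablalb_j Y_{\ell,m}=\sum q_{\ell,m}\tfrac12\int\nablalb_j(Y_{\ell,m}^2)$, which is real. The same holds if $L$ is real-valued, since then $Q$ is a real operator. In either case the imaginary part vanishes, and the momentum is conserved.
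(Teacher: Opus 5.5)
Parts i) and ii) follow the paper's proof: split off the stochastic convolution, drop the cross terms by independence and zero mean, then use the It\^o isometry and unitarity. Your reduction of the deterministic part of ii) to $\|\nablalb x\|^2=\langle -\Deltalb x,x\rangle$ is, if anything, more careful than the paper's. The gap is in iii), exactly at the step you flag, and none of your three fallbacks closes it. The identity $\langle \partial_j e^{-it\Deltalb}x, e^{-it\Deltalb}y\rangle=\langle \partial_j x,y\rangle$, for the Cartesian components $\partial_j=e_j\cdot\nablalb$, is false on $\IS^2$. Since $\nablalb x_3=e_3-x_3x$, one has $\partial_3 Y_{1,0}\propto 1-x_3^2=\tfrac23-(x_3^2-\tfrac13)$, which lives in degrees $0$ and $2$ only. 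In general $\partial_j$ maps degree $\ell$ into degrees $\ell\pm1$. Hence $\int_{\IS^2}Y_{\ell,m}\,\partial_j Y_{\ell',m'}\,\dd x=0$ unless $|\ell-\ell'|=1$, which is the opposite of the ``matching eigenvalue'' structure you hoped for. The surviving cross terms carry phases $e^{i(\lambda_{\ell'}-\lambda_\ell)t}$ with $\lambda_{\ell'}\neq\lambda_\ell$, and these do not cancel. The It\^o route fails for the same reason. The operator $\partial_j$ does not commute with $\Deltalb$, and it is not even skew-adjoint on the sphere: $\operatorname{div}_{\IS^2}\nablalb x_j=\Deltalb x_j=-2x_j$ gives $\partial_j^\ast=-\partial_j+2x_j$. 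So the drift $i\langle[\Deltalb,\partial_j]u,u\rangle$ of $\int_{\IS^2}\overline{u}\,\partial_j u\,\dd x$ does not vanish.

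This cannot be repaired, because iii) is false as stated. Take $u_0=Y_{0,0}+iY_{1,0}$ and $L\equiv 0$, or any real noise with $Q$ diagonal in the $Y_{\ell,m}$, whose contribution to $\E[p]$ vanishes. Then $u(t)=Y_{0,0}+ie^{2it}Y_{1,0}$. Using $\int_{\IS^2}(1-x_3^2)\,\dd x=8\pi/3$, one finds $p_3(u(t))=\tfrac{4}{\sqrt3}\cos(2t)$, so even the final ``conservation'' claim fails. The paper's proof makes the same move. It passes $\partial_j$ through $e^{-it\Deltalb}$ in the deterministic term and cancels the group inside $\langle\langle\cdot,\cdot\rangle\rangle_{HS}$ in the noise term, so it shares the gap rather than offering a way around it. What does survive is the noise term when $QY_{\ell,m}=q_{\ell,m}Y_{\ell,m}$. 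The phases then cancel mode by mode, and the integrand $\sum_{\ell,m}q_{\ell,m}\int_{\IS^2}Y_{\ell,m}\,\partial_jY_{\ell,m}\,\dd x$ is real (indeed zero, by the degree shift). If you replace $\nablalb$ in $p$ by the rotation generators $x\times\nablalb$, which are skew-adjoint and commute with $\Deltalb$, your argument for iii) goes through essentially verbatim.
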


\begin{appendixproof}[Proof of Proposition~\ref{prop:exact}]

   \begin{enumerate}[label=\roman*)]
    \item \label{pf-exact-mass} To show the long-time behavior of the mass for the stochastic Schr\"odinger equation on the sphere~\eqref{eqn:SSE}, we proceed
    similarly to the proof of Proposition~\ref{prop:energy-exact}. We insert the mild solution~\eqref{eqn:SSE_mildsol}
    into the definition of the mass~\eqref{eqn:def_mass} and obtain the relation
    \begin{align*}
    \E\left[\cM(t)\right] &= \E\left[\norm{u(t)}^2\right]\\
    & = \E\left[\norm{\expp{-it\Deltalb}u_0 - i\int_{0}^{t}\expp{-i(t-s)\Deltalb}\dd L(s)}^2\right]\\
    & = \E\left[\norm{\expp{-it\Deltalb}u_0}^2 + \norm{i \int_{0}^{t}\expp{-i(t-s)\Deltalb}\dd L(s)}^2\right.\\
    & \left. \quad - 2 \Re \left\langle \expp{-it\Deltalb}u_0, i \int_{0}^{t} \expp{-i(t-s)\Deltalb}\dd L(s) \right\rangle\right],
    \end{align*}
    where the last equality follows from properties of complex-valued inner products.
    By the martingale property of the stochastic integral (e.\,g., \cite[Corollary 8.17]{peszat2007}),
    an It\^o isometry (e.\,g., \cite[Corollary 8.17]{peszat2007}), and noting that $\expp{-it\Deltalb}$ is an isometry, we obtain the following relation:
    \begin{align*}
        \E\left[\cM(t)\right] &= \E\left[\norm{u_0}^2\right] + \int_{0}^{t}\norm{\expp{-i(t-s)\Deltalb}Q^{1/2}}^2_{HS({L^2(\IS^2; \IC)}; {L^2(\IS^2; \IC)})} \dd s\\
        &= \E\left[\cM(0)\right] + \int_{0}^{t} \sum_{\ell=0}^{\infty}\sum_{m=-\ell}^{\ell}\norm{\expp{-i(t-s)\Deltalb}Q^{1/2}Y_{\ell , m}}^2\dd s\\
        &= \E\left[\cM(0)\right] + \int_{0}^{t} \sum_{\ell=0}^{\infty}\sum_{m=-\ell}^{\ell}\norm{Q^{1/2}Y_{\ell , m}}^2\dd s\\
        &= \E\left[\cM(0)\right] + \int_{0}^{t} \sum_{\ell=0}^{\infty}\sum_{m=-\ell}^{\ell}\langle Q Y_{\ell , m}, Y_{\ell , m}\rangle\dd s\\
        & = \E\left[\cM(0)\right] + t\,\tr(Q),
    \end{align*}
    where we used the self-adjointness of $Q$ and the definitions of the Hilbert--Schmidt norm and of the trace of an operator.
    This proves the long-time behavior of the mass.
    \item A direct computation, similar to Item~\ref{pf-exact-mass}, allows us to write the expected energy as
    \begin{align*}
        \E\left[\cE(t)\right] &=\E\left[\norm{\nablalb u(t)}_{L^2(\IS^2; \IC^3)}^2\right] \\
         &= \E\left[\norm{\nablalb u_0}^2_{{L^2(\IS^2; \IC^3)}}\right] + \int_{0}^{t} \norm{\nablalb \expp{-i(t-s)\Deltalb}Q^{1/2}}_{\HS({L^2(\IS^2; \IC)}; {L^2(\IS^2; \IC^3)})} \dd s.
    \end{align*}
    By the definition of the Hilbert--Schmidt norm, we then obtain that
    \begin{align*}
        \E\left[\cE(t)\right] &= \E\left[\norm{\nablalb u_0}^2_{{L^2(\IS^2; \IC^3)}}\right] + \int_{0}^{t}\tr\left(Q^{1/2} \nablalb^\ast \nablalb Q^{1/2}\right)\dd s\\
        & = \E\left[\cE(0)\right] + t\,\tr\left(Q^{1/2} \nablalb^\ast \nablalb Q^{1/2}\right).
    \end{align*}
    Note that, by the integration by parts formula for compact Riemannian manifolds without boundary (see, e.g., \cite[Chapter 2]{leeIntroductionRiemannianManifolds2018}), $\nablalb^\ast$ is the divergence operator on $\IS^2$. Therefore, $\nablalb^\ast \nablalb = - \Deltalb$ and we finally obtain
    \begin{equation*}
        \E\left[\cE(t)\right] = \E\left[\cE(0)\right] + t\,\tr\left(Q^{1/2} (-\Deltalb) Q^{1/2}\right)
    \end{equation*}
    for all time $t\geq0$. This proves the long-time behavior of the energy.

\item For simplicity's sake, we start by computing the term $\int_{\IS^2}u(t)\nablalb \overline{u}(t)\dd x$ in the momentum~\eqref{eqn:def_mom}. We have:
\begin{align*}
    \int_{\IS^2}u(t)\nablalb \overline{u}(t)\dd x
   &= \int_{\IS^2}\expp{-it\Deltalb}u_0 \nablalb \overline{\expp{-it\Deltalb}u_0}\dd x \\
   &+ i \int_{\IS^2} \expp{-it\Deltalb}u_0\overline{\int_{0}^{t}\nablalb
   	\expp{-i(t-s)\Deltalb}\dd L(s)}\dd x\\
   &- i \int_{\IS^2} \int_{0}^{t}\expp{-i(t-s)\Deltalb}\dd L(s)\nablalb \overline{\expp{-it\Deltalb}u_0}\dd x \\
   &+ \int_{\IS^2} \int_{0}^{t}\expp{-i(t-s)\Deltalb}\dd L(s)\overline{\int_{0}^{t}\nablalb \expp{-i(t-s)\Deltalb}\dd L(s)}\dd x.
\end{align*}
Taking expectation, since $L(s)$ is independent of $u_0$, we obtain the relation
\begin{align}
    \begin{split}
    \E\left[\int_{\IS^2} u(t)\nablalb \overline{u}(t)\dd x\right] =&\E\left[ \int_{\IS^2} \exp{(-it\Deltalb)}u_0 \nablalb \overline{\exp{(-it\Deltalb)}u_0}\dd x  \right] \\
    &+ \E\left[\int_{\IS^2} \int_{0}^{t}\exp{(-i(t-s)\Deltalb)}\dd L(s)\overline{\int_{0}^{t}\nablalb \exp{(-i(t-s)\Deltalb)}\dd L(s)}\dd x\right]. \label{whatareyoulabel}
    \end{split}
\end{align}
Starting with the first term in \eqref{whatareyoulabel}, we have componentwise, for $j=1,2,3$,
\begin{align*}
    \int_{\IS^2} \exp{(-it\Deltalb)}u_0 \partial_j \overline{\exp{(-it\Deltalb)}u_0}\dd x &=\int_{\IS^2} \exp{(-it\Deltalb)}u_0  \overline{\exp{(-it\Deltalb)}\partial_j u_0}\dd x\\
    &=\left<\exp{(-it\Deltalb)}u_0,\exp{(it\Deltalb)}\partial_j u_0\right>_{L^2(\IS^2,\mathbb C)}\\
    &=\left<u_0,\partial_j u_0\right>_{L^2(\IS^2,\mathbb C)}.
\end{align*}
Hence, we obtain the relation
$$
\int_{\IS^2} \exp{(-it\Deltalb)}u_0 \nablalb \overline{\exp{(-it\Deltalb)}u_0}\dd x=\int_{\IS^2} u_0 \nablalb \overline{u_0}\dd x.
$$
Turning our attention to the second term in \eqref{whatareyoulabel}, we have to compute
\begin{align}\label{exp_mom}
	\E\left[\int_{\IS^2} \int_{0}^{t}\exp{(-i(t-s)\Deltalb)}\dd L(s)\overline{\int_{0}^{t}\nablalb \exp{(-i(t-s)\Deltalb)}\dd L(s)}\dd x\right].
\end{align}
In the general case, when we don't assume commutativity of $Q$ and $\Deltalb$, we obtain that this term is equal to
\begin{align*}
	&\int_{0}^{t}\left<\left<\exp{(-i(t-s)\Deltalb)Q^{1/2}},\left(\nablalb \exp{(-i(t-s)\Deltalb)}Q^{1/2}\right)\right>\right>_{HS}\dd s\\
	&=\int_{0}^{t}\left<\left<Q^{1/2},\nablalb Q^{1/2}\right>\right>_{HS}\dd s,
\end{align*}
where the last equality follows from properties of the Hermitian inner product and from the isometry property~\eqref{eqn:SSE_SG_isometry} of the semigroup.

The above computation finally yields
\begin{equation*}
	\E\left[\int_{\IS^2} u(t)\nablalb \overline u(t)\dd x\right] = \E\left[\int_{\IS^2} u_0\nablalb \overline u_0\dd x\right]+t\left(\left<\left<Q^{1/2},\nablalb Q^{1/2}\right>\right>_{HS}\right).
\end{equation*}

Analogously, one obtains
\begin{equation*}
	\E\left[\int_{\IS^2} \overline{u}(t)\nablalb u(t)\dd x\right] = \E\left[\int_{\IS^2} \overline{u}_0\nablalb  u_0\dd x\right]+t\left(\left<\left<\nablalb Q^{1/2}, Q^{1/2}\right>\right>_{HS}\right).
\end{equation*}

Summing up both integrals gives the relation
\begin{align}
    \begin{split}\label{anotherequation}
	\E\left[p(u(t))\right] &= \E\left[ p(u_0)\right]+it\left(\left<\left< Q^{1/2}, \nablalb Q^{1/2}\right>\right>_{HS}-\left<\left<\nablalb Q^{1/2}, Q^{1/2}\right>\right>_{HS}\right)\\
	&= \E\left[ p(u_0)\right]+it\left(\left<\left< Q^{1/2}, \nablalb Q^{1/2}\right>\right>_{HS}-\overline{\left<\left< Q^{1/2}, \nablalb Q^{1/2}\right>\right>_{HS}}\right)\\
	&= \E\left[ p(u_0)\right]-2t\operatorname{Im}\left(\left<\left< Q^{1/2}, \nablalb Q^{1/2}\right>\right>_{HS}\right)
    \end{split}
\end{align}
which shows the first result on the long-time behavior of the momentum.

To prove the second statement, we first assume that $Q$ is real. We then obtain
$$
\left<\left<Q^{1/2},\nablalb Q^{1/2}\right>\right>_{HS} = 0.
$$

Assume now that \(Q\) is diagonal in the same basis as $\Deltalb$, i.\,e.
\[
Q Y_{\ell m} = q_{\ell m} Y_{\ell m}
\]
with real-valued spherical harmonics. Then, for the term $\operatorname{Im}\left(\left<\left<Q^{1/2},\nablalb Q^{1/2}\right>\right>_{HS}\right)$ in \eqref{anotherequation},
we obtain
\begin{align*}\label{diag_im}
	\left<\left<Q^{1/2},\nablalb Q^{1/2}\right>\right>_{HS}&=  \sum_{\ell=0}^\infty \sum_{m=-\ell}^\ell  \int_{\mathbb S^2} q_{\ell,m}^{1/2}Y_{\ell m}(x) \, \overline{q_{\ell,m}^{1/2}\nablalb Y_{\ell m}(x)} \dd x\\
	&=\sum_{\ell=0}^\infty \sum_{m=-\ell}^\ell \abs{q_{\ell m}} \int_{\mathbb S^2} Y_{\ell m}(x) \, \overline{\nablalb Y_{\ell m}(x)} \dd x \\
	&=\sum_{\ell=0}^\infty \sum_{m=-\ell}^\ell \abs{q_{\ell m}} \int_{\mathbb S^2} Y_{\ell m}(x) \,{\nablalb Y_{\ell m}(x)} \dd x \\
	&=\sum_{\ell=0}^\infty \sum_{m=-\ell}^\ell \abs{q_{\ell m}} \int_{\mathbb S^2} {\frac 12\nablalb Y^2_{\ell m}(x)} \dd x=0.
\end{align*}
The last inequality follows using the divergence theorem since the sphere is a compact manifold without boundary.
This implies the conservation of momentum, i.\,e.
\begin{align*}
	\E\left[p(u(t))\right]
	&= \E\left[ p(u_0)\right].
\end{align*}
This concludes the proof.\qed 
\end{enumerate}
\renewcommand{\qedsymbol}{}
\end{appendixproof}

\subsection{Spectral discretization}\label{sch-spectral}
We now study the long-time behavior, with respect to mass, energy, and momentum, of the spectral discretization~\eqref{spectralsol} of
the stochastic Schrödinger equation on the sphere~\eqref{eqn:SSE}. 
The spatially semi-discretized mild solution reads
\begin{equation}
    \label{eqn:SSE_spectral}
    u^\kappa(t) = \expp{-it\Deltalb^\kappa}u^\kappa(0) - i\int_{0}^{t} \expp{-i(t-s)\Deltalb^\kappa} \cP_\kappa \dd L(s).
\end{equation}
Similarly to the previous subsection, one defines the semi-discretized mass, energy and momentum as
\begin{align*}
    \cM^\kappa(t) &= \norm{u^\kappa(t)}^2,\ \cE^\kappa(t) = \norm{\nablalb^\kappa u^\kappa(t)}_{L^2(\IS^2; \IC^3)}^2,\  \\
    p^\kappa(u^\kappa(t)) &= i\int_{\IS^2} u^\kappa(t,x)\nablalb^\kappa \overline{u^\kappa}(t,x) - \overline{u^\kappa}(t,x)\nablalb^\kappa u^\kappa(t,x) \dd x,
\end{align*}
where we denote $\nablalb^\kappa = \mathcal P_\kappa \nablalb$.

The evolution of the expected mass, energy and momentum of the spatially semi-discretized mild solution~\eqref{eqn:SSE_spectral} is given by the following proposition.
\begin{proposition}\label{prop:semi}
    Consider the stochastic Schrödinger equation on the sphere~\eqref{eqn:SSE} with a (possibly complex-valued) $Q$-Lévy process $L$ which is of trace-class.
    Let $u^\kappa$ denote the spectral approximation given by equation~\eqref{eqn:SSE_spectral}.
    \begin{enumerate}[label=\roman*)]
    \item If the initial value $u_0 \in {L^2(\IS^2; \IC)}$ and if $\E\left[\cM^\kappa(0)\right] < \infty$, then
    the spectral approximation $u^\kappa$ satisfies the following trace formula for the mass:
    \begin{equation*}
        \E\left[\cM^\kappa(t)\right] = \E\left[\cM^\kappa(0)\right] + t\,\tr(Q^\kappa),
    \end{equation*}
    for every time $t \geq 0$.
    \item If $u_0 \in {H^1(\IS^2; \IC)}$ and $\E\left[\cE^\kappa(0)\right] < \infty$, then
    the spectral approximation $u^\kappa$ satisfies the following trace formula for the energy:
	\begin{align*}
		\E\left[\cE^\kappa(t)\right] &= \E\left[\cE^\kappa(0)\right] + t\,\tr((\cP_\kappa Q^{1/2})^\ast(\nablalb^\kappa)^\ast\nablalb^\kappa\cP_\kappa Q^{1/2}) \\
		&=  \E\left[\cE^\kappa(0)\right] + t\,\tr((\cP_\kappa Q^{1/2})^\ast (-\Deltalb^\kappa) \cP_\kappa Q^{1/2}),
	\end{align*}
    for every time $t \geq 0$.
    \item If $u_0 \in {H^1(\IS^2; \IC)}$ and, componentwise, $\mathbb E\left[p^\kappa(u^\kappa(0))\right]<\infty$, then $u^\kappa$ satisfies the following trace formula for the momentum:
	\begin{equation*}
		\E\left[p^\kappa(u^\kappa(t))\right] = \E\left[p^\kappa(u^\kappa(0))\right]-2t\operatorname{Im}\left(\left<\left<\cP_\kappa Q^{1/2},\nablalb^\kappa \cP_\kappa Q^{1/2}\right>\right>_{HS}\right),
	\end{equation*}
    for every time $t \geq 0$.

	If, in addition, $Q^\kappa$ is diagonalized by the (real-valued) spherical harmonics $Y_{\ell,m}$ or $L$ is a real-valued $Q$-L\'evy process, then, the momentum is a conserved quantity:
	$$
	\E\left[p^\kappa(u^\kappa(t))\right] = \E\left[p^\kappa(u^\kappa(0))\right],
	$$
    for every time $t \geq 0$.
    \end{enumerate}
\end{proposition}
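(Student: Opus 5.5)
The plan is to mirror the proof of Proposition~\ref{prop:exact}, replacing $\Deltalb$, $\nablalb$ and $Q^{1/2}$ by $\Deltalb^\kappa$, $\nablalb^\kappa$ and $\cP_\kappa Q^{1/2}$, as was done for the wave equation in Proposition~\ref{prop:energy-spectral}. Throughout, I work in the finite-dimensional space $H^\kappa$ (complexified). There $\Deltalb^\kappa$ is the restriction of $\Deltalb$, so it is diagonal in the basis $Y_{\ell,m}$ and self-adjoint. Hence $\expp{-it\Deltalb^\kappa}$ is unitary on $H^\kappa$, commutes with $\Deltalb^\kappa$, and commutes with $\cP_\kappa$. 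The stochastic convolution in \eqref{eqn:SSE_spectral} is a martingale with covariance governed by $\cP_\kappa Q \cP_\kappa = Q^\kappa$. Since everything is finite-dimensional, the integrability conditions of Proposition~\ref{prop:exact} on $Q$ become automatic.

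\emph{Mass.} Insert \eqref{eqn:SSE_spectral} into $\cM^\kappa$ and expand the square. The cross term $-2\Re\langle \expp{-it\Deltalb^\kappa}u^\kappa(0), i\int_0^t\cdots\rangle$ has zero mean, by the independence of $u^\kappa(0)$ from $L$ and the martingale property. The initial term is preserved by unitarity. By It\^o's isometry, the noise term equals $\int_0^t \norm{\expp{-i(t-s)\Deltalb^\kappa}\cP_\kappa Q^{1/2}}_{HS}^2 \dd s = t\norm{\cP_\kappa Q^{1/2}}_{HS}^2 = t\,\tr(Q^\kappa)$.

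\emph{Energy.} The same expansion is applied to $\nablalb^\kappa u^\kappa(t)$. The deterministic part needs $\norm{\nablalb^\kappa \expp{-it\Deltalb^\kappa} w}^2 = \norm{\nablalb^\kappa w}^2$ for $w \in H^\kappa$. I will obtain this by writing $\norm{\nablalb^\kappa w}^2 = \langle (\nablalb^\kappa)^\ast\nablalb^\kappa w, w\rangle$ and identifying $(\nablalb^\kappa)^\ast\nablalb^\kappa = -\Deltalb^\kappa$ on $H^\kappa$. For this identification I use integration by parts on $\IS^2$ ($\nablalb^\ast = -\mathrm{div}$, as in the proof of Proposition~\ref{prop:exact}). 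I also use that, for $w\in H^\kappa$, $\langle\nablalb w, \nablalb z\rangle = \langle -\Deltalb w, z\rangle$ involves only $H^\kappa$ components when $z\in H^\kappa$. Since $-\Deltalb^\kappa$ commutes with the unitary group, the deterministic part of the energy is conserved. The noise term is $\int_0^t \norm{\nablalb^\kappa \expp{-i(t-s)\Deltalb^\kappa}\cP_\kappa Q^{1/2}}_{HS}^2\dd s$. Writing the Hilbert--Schmidt norm as a trace and using the same commutation, this gives $t\,\tr((\cP_\kappa Q^{1/2})^\ast(-\Deltalb^\kappa)\cP_\kappa Q^{1/2})$. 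Both expressions in the statement follow.

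\emph{Momentum.} This is the step I expect to be the main obstacle. The difficulty is the precise meaning of $\nablalb^\kappa=\cP_\kappa\nablalb$, applied componentwise to vector fields; in particular one must check that it commutes with $\expp{-it\Deltalb^\kappa}$ in the required sense. I will compute $\E\int u^\kappa\nablalb^\kappa\overline{u^\kappa}\dd x$ componentwise, exactly as in \eqref{whatareyoulabel}; the mixed terms vanish by independence and zero mean. For the initial-data term, I want $\langle \expp{-it\Deltalb^\kappa}w, \partial_j^\kappa \expp{-it\Deltalb^\kappa} w\rangle = \langle w,\partial_j^\kappa w\rangle$. To get it, I pair against elements of $H^\kappa$, which removes the projection $\cP_\kappa$, and then invoke the commutation of $\partial_j$ with the semigroup used in the exact case. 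The noise term becomes $t\left<\left<\cP_\kappa Q^{1/2},\nablalb^\kappa\cP_\kappa Q^{1/2}\right>\right>_{HS}$ by It\^o's isometry and unitarity. Subtracting the conjugate expression yields the $-2t\operatorname{Im}(\cdot)$ formula. Finally, when $Q^\kappa$ is diagonal in $Y_{\ell,m}$, the inner product reduces to $\sum_{\ell,m} q_{\ell m}\int_{\IS^2} Y_{\ell m}\nablalb Y_{\ell m}\dd x = \sum_{\ell,m} q_{\ell m}\int_{\IS^2}\tfrac12\nablalb Y_{\ell m}^2\dd x = 0$ by the divergence theorem. The projection is harmless here because $Y_{\ell m}\in H^\kappa$. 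When $L$ is real-valued, the inner product is real, so its imaginary part is zero. In either case the momentum is conserved.
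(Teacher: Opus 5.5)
\textbf{Gap in iii).} You were right to single out iii) as the main obstacle, but the step ``invoke the commutation of $\partial_j$ with the semigroup used in the exact case'' is false. The Cartesian components $\partial_j$ of $\nablalb$ do \emph{not} commute with $\Deltalb$, hence not with $\expp{-it\Deltalb^\kappa}$. The reason is that $\partial_j$ maps spherical harmonics of degree $\ell$ into degrees $\ell-1$ and $\ell+1$; for instance, the third component of $\nablalb\cos\vartheta$ is $\sin^2\vartheta$, which has a degree-$0$ and a degree-$2$ part.

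Concretely, take $\kappa\ge1$, $u_0=Y_{0,0}+Y_{1,0}$ and $L=0$. (Any mean-zero $L$ independent of $u_0$ with $Q^\kappa$ diagonal in the $Y_{\ell,m}$ also works; its contribution is then zero.)
\begin{itemize}
\item The solution is $u^\kappa(t)=Y_{0,0}+e^{2it}Y_{1,0}$.
\item By parity, $\int_{\IS^2}Y_{1,0}\nablalb^\kappa Y_{1,0}\dd x=0$.
\item Since $\int_{\IS^2}\nablalb\cos\vartheta\dd x=\tfrac{8\pi}{3}e_3$, one gets $\int_{\IS^2}Y_{0,0}\nablalb^\kappa Y_{1,0}\dd x=\tfrac{2}{\sqrt3}e_3$.
\item Hence $p^\kappa(u^\kappa(t))=\tfrac{4}{\sqrt3}\sin(2t)\,e_3$, which is not constant.
\end{itemize}
So your identity $\langle\expp{-it\Deltalb^\kappa}w,\partial_j^\kappa\expp{-it\Deltalb^\kappa}w\rangle=\langle w,\partial_j^\kappa w\rangle$ is false.

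The same commutation is hidden in your reduction of the noise term to $t\langle\langle\cP_\kappa Q^{1/2},\nablalb^\kappa\cP_\kappa Q^{1/2}\rangle\rangle_{HS}$ ``by unitarity''. Removing $\expp{-i(t-s)\Deltalb^\kappa}$ from both slots requires moving it past $\nablalb^\kappa$. By cyclicity of the trace this is legitimate when $Q^\kappa$ commutes with $\Deltalb^\kappa$, and in that case the term is actually zero, since $\partial_j$ maps each eigenspace of $\Deltalb$ into the neighbouring ones. For a real but non-diagonal $Q$ (e.g.\ with $L_{0,0}$ and $L_{1,0}$ correlated), the noise contribution is nonzero and oscillates in $t$. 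So your argument for the real-valued case fails as well.

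Finally, $\int_{\IS^2}\tfrac12\nablalb Y_{\ell,m}^2\dd x=0$ holds by parity, not by the divergence theorem. For $\R^3$-valued surface gradients that theorem gives $\int_{\IS^2}\nablalb g\dd x=2\int_{\IS^2}g\,x\dd x$, which is nonzero in general.

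\textbf{Mass, energy, and comparison with the paper.} Your mass argument, and your energy argument up to one point, coincide with the paper's proof, which likewise reruns the exact-solution computation on $H^\kappa$. The point is this: identifying $(\nablalb^\kappa)^\ast\nablalb^\kappa=-\Deltalb^\kappa$ on $H^\kappa$ needs $\cP_\kappa\nablalb w=\nablalb w$ for $w\in H^\kappa$. That is, $\cP_\kappa$ must act on tangent fields as truncation in the vector spherical harmonics, so that $\nablalb^\kappa Y_{\ell,m}=\sqrt{\lambda_\ell}\,\bPhi_{\ell,m}$ for $\ell\le\kappa$. This is what the paper invokes. Your integration-by-parts remark only deals with the scalar side. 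With the Cartesian componentwise truncation you allude to, the components of $\nablalb Y_{\kappa,m}$ have nonzero degree-$(\kappa+1)$ parts, so $\norm{\nablalb^\kappa Y_{\kappa,m}}^2<\lambda_\kappa$ and the second equality in ii) fails.

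As for iii), the paper's proof refers back to the exact-solution computation, which uses precisely the commutation of $\partial_j$ with $\expp{-it\Deltalb}$. So the obstruction lies in the claim itself, not in your adaptation of it: with the $\IC^3$-valued surface gradient, the momentum statement fails in this generality. Your argument does go through if $\nablalb$ in $p^\kappa$ is replaced by the rotation generators $x\times\nablalb$ (angular momentum). These commute with $\Deltalb$ and with $\cP_\kappa$, and for the diagonal case they are divergence-free tangent fields, so the divergence theorem then genuinely applies.
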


\begin{appendixproof}[Proof of Proposition~\ref{prop:semi}]
  \begin{enumerate}[label=\roman*)]
   \item The proof of the long-time behavior of the mass of the spectral discretization
   is identical to that of Proposition~\ref{prop:exact} using Proposition~\ref{prop:energy-spectral} for the treatment of the stochastic terms.
   \item To prove the long-time behavior of the energy of the spectral discretization, we observe that for, $\ell\leq\kappa$, we have
	$$
	(\nablalb^\kappa)^\ast\nablalb^\kappa Y_{\ell,m}=(\cP_\kappa \nablalb)^\ast\cP_\kappa\nablalb Y_{\ell,m} = \nabla_{\IS^{2}}^\ast\cP_\kappa \Phi_{\ell,m}=-\Deltalb Y_{\ell,m}=-\Deltalb^\kappa Y_{\ell,m},
	$$
	where we used properties of the vector spherical harmonics and the spectral truncation of the vector spherical harmonics.
	The proof is then identical to that for the exact solution given in Proposition~\ref{prop:exact}.

  \item To prove the long-time behavior of the momentum, it suffices to follow the steps of the corresponding proof for the exact solution given in Proposition~\ref{prop:exact} and
  to note that
	\begin{align*}
		&\E\left[\int_{\IS^2} \int_{0}^{t}\exp{(-i(t-s)\Deltalb^\kappa)}\dd L^\kappa(s)\overline{\int_{0}^{t}\nablalb^\kappa \exp{(-i(t-s)\Deltalb^\kappa)}\dd L^\kappa(s)}\dd x\right]\\
		&=\int_{0}^{t}\left<\left<\exp{(-i(t-s)\Deltalb^\kappa)\cP_\kappa Q^{1/2}},\nablalb^\kappa \exp{(-i(t-s)\Deltalb^\kappa)}\cP_\kappa Q^{1/2}\right>\right>_{HS}\dd s\\
		&=\int_{0}^{t}\left<\left<\cP_\kappa Q^{1/2},\nablalb^\kappa \cP_\kappa Q^{1/2}\right>\right>_{HS}\dd s,
	\end{align*}
	where we used properties of the projections onto the finite dimensional subspace for the spectral discretization.\qed
\end{enumerate}
\renewcommand{\qedsymbol}{}
\end{appendixproof}

\subsection{Temporal discretizations}\label{sch-time}

We now consider the evolution of mass, energy and momentum for fully discrete approximations of the solution to the stochastic Schrödinger equation on the sphere~\eqref{eqn:SSE}
obtained by the three time integrators defined in Section~\ref{sect-setting}.

While we will consider for the mass and energy both the backward and forward Euler--Maruyama schemes as well as the exponential integrator scheme, for the momentum we will only focus on the exponential integrator scheme. On the one hand, this is due to the wrong behavior observed for the Euler--Maruyama schemes until now. On the other hand, it is due to the technicality of the computations needed for the momentum. 
Furthermore, note that all the numerical schemes preserves the momentum in the case of a real valued L\'evy process.

We denote the fully discrete solution by $u^\kappa_n = \sum_{\ell=0}^{\kappa}\sum_{m=-\ell}^{\ell} u^{\ell,m}_n Y_{\ell , m}$ and define
the discretized mass by
\begin{equation*}
    M_n^\kappa = \norm{\sum_{\ell=0}^{\kappa}\sum_{m=-\ell}^{\ell}u^{\ell, m}_{n}Y_{\ell , m}}^2 
= \sum_{\ell=0}^{\kappa}\sum_{m=-\ell}^{\ell} \left\lvert u^{\ell, m}_{n} \right\rvert^2
= \sum_{\ell=0}^{\kappa}\sum_{m=-\ell}^{\ell} \cM_{n}^{\ell,m}.
\end{equation*}
Similarly, by the orthogonality of the vector spherical harmonics and Parseval's identity, the discretized energy decomposes as:
\begin{align*}
    \cE^\kappa_n = \norm{\nablalb^\kappa u^\kappa_n}^2 &= \norm{\sum_{\ell=0}^{\kappa}\sum_{m=-\ell}^{\ell}\nablalb^\kappa u^{\ell, m}_n Y_{\ell , m}}^2
    = \sum_{\ell=0}^{\kappa}\sum_{m=-\ell}^{\ell}\norm{\nablalb^\kappa u^{\ell, m}_n Y_{\ell , m}}^2
    = \sum_{\ell=0}^{\kappa}\sum_{m=-\ell}^{\ell} \cE^{\ell,m}_n.
\end{align*}

In the next subsections, we analyze long-time behavior, with respect to these quantities, for the three time integrators.

We start by investigating the long-time behavior of the forward Euler--Maruyama scheme~\eqref{eEM} which reads
\begin{align}\label{eqn:SSE_EM}
    u^\kappa_{n+1} = u^\kappa_n - i\tau \Deltalb^\kappa u^\kappa_n - i\Delta L^\kappa_{n}
\end{align}
or, decomposed, as the system of equations
\begin{equation*}
    u^{\ell,m}_{n+1} = u^{\ell,m}_{n} + i \tau \lambda_\ell u^{\ell,m}_{n} - i \Delta L^{\ell,m}_n
\end{equation*}
for $\ell = 0, \dots, \kappa, m = -\ell, \dots, \ell$ and $n \geq 0$.

The following proposition shows that the forward Euler--Maruyama scheme~\eqref{eqn:SSE_EM} exhibits an exponential increase in expected mass and energy.
\begin{proposition}
    \label{prop:semi_EM}
    Consider the stochastic Schrödinger equation on the sphere~\eqref{eqn:SSE} with a (possibly complex-valued) $Q$-L\'evy process $L$ which is of trace class.
    Let $u^\kappa_n$ denote the fully-discrete approximation given by the forward Euler--Maruyama method~\eqref{eqn:SSE_EM}.

    \begin{enumerate}[label=\roman*)]
    \item If the initial value $u_0 \in {L^2(\IS^2; \IC)}$ and $0 < \E\left[\cM_0^{\kappa} - \cM_0^{0,0}\right] < \infty$ and $\E\left[\cM_0^{0,0}\right] < \infty$, then
    the expected mass of $u^\kappa_n$ grows exponentially fast in time:
    \begin{equation}\label{eqn:mass_EM}
        \E\left[\cM^\kappa_n\right] \geq \expp{\frac{1}{2}\tau t_n}\E\left[\cM_0^{\kappa} - \cM_0^{0,0}\right],
    \end{equation}
    for every discrete time $t_n = n\tau, n\geq 0$.
    \item If $u_0 \in {H^1(\IS^2; \IC)}$ and  $0 < \E\left[\cE_0^{\kappa} - \cE_0^{0,0}\right] < \infty$ and $\E\left[\cE_0^{0,0}\right] < \infty$, then
     the expected energy of $u^\kappa_n$ grows exponentially fast in time:
    \begin{equation}\label{eqn:energy_EM}
        \E\left[\cE^\kappa_n\right] \geq \expp{\frac{1}{2}\tau t_n}\E\left[\cE_0^{\kappa} - \cE_0^{0,0}\right],
    \end{equation}
    for every discrete time $t_n = n\tau, n\geq 0$.
    \end{enumerate}
\end{proposition}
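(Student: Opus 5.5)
The plan is to follow the proof of Proposition~\ref{prop:energy-em}: work mode by mode in the spherical harmonic basis, derive a one-step recursion for the expected modal mass and energy, and then sum over modes. From the decomposed scheme $u^{\ell,m}_{n+1} = (1+\ii\tau\lambda_\ell)u^{\ell,m}_n - \ii\Delta L^{\ell,m}_n$, I compute
\[
\abs{u^{\ell,m}_{n+1}}^2 = (1+\tau^2\lambda_\ell^2)\abs{u^{\ell,m}_n}^2 + \abs{\Delta L^{\ell,m}_n}^2 - 2\Re\left((1+\ii\tau\lambda_\ell)u^{\ell,m}_n\,\overline{\ii\Delta L^{\ell,m}_n}\right).
\]
Taking expectations, the cross term vanishes. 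The reason is that $\Delta L^{\ell,m}_n$ is independent of $\mathcal F_{t_n}$, to which $u^{\ell,m}_n$ is adapted, and it has mean zero since the noise is assumed centered. This gives the identity
\[
\E\left[\cM^{\ell,m}_{n+1}\right] = (1+\tau^2\lambda_\ell^2)\E\left[\cM^{\ell,m}_n\right] + \E\left[\abs{\Delta L^{\ell,m}_n}^2\right] \geq (1+\tau^2\lambda_\ell^2)\E\left[\cM^{\ell,m}_n\right].
\]

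For the energy, I use that $\nablalb^\kappa (u^{\ell,m}_n Y_{\ell,m}) = u^{\ell,m}_n\sqrt{\lambda_\ell}\,\bPhi_{\ell,m}$, where $\bPhi_{\ell,m}$ are the orthonormal vector spherical harmonics from \eqref{3d-onb}. Hence $\cE^{\ell,m}_n = \lambda_\ell\abs{u^{\ell,m}_n}^2 = \lambda_\ell\cM^{\ell,m}_n$. The same recursion therefore holds for the energy after multiplying by $\lambda_\ell$.

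Next I iterate the recursion. For $\ell\geq1$ we have $\lambda_\ell\geq 2$, so $1+\tau^2\lambda_\ell^2\geq 1+\tau^2$. Iterating gives $\E[\cM^{\ell,m}_n]\geq(1+\tau^2)^n\E[\cM^{\ell,m}_0]$. As in the proof of Proposition~\ref{prop:energy-em}, I then take $\tau<1$ and use $\log(1+x)\geq x/2$ for $x\in[0,1]$ to get $(1+\tau^2)^n\geq \exp(\tfrac12\tau^2 n)=\exp(\tfrac12\tau t_n)$. This is the same implicit step-size restriction used there, and I will state it explicitly.

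Finally, I sum over $\ell=1,\dots,\kappa$ and $\abs{m}\leq\ell$. The mode $(0,0)$ has nonnegative expectation and can be dropped from below; it is finite by assumption. Dropping it yields \eqref{eqn:mass_EM}. For the energy, the $(0,0)$ contribution is identically zero because $\lambda_0=0$, and the same summation gives \eqref{eqn:energy_EM}.

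The main obstacle is mild: the noise may be complex-valued and correlated across modes. So I must check that only the diagonal second moment $\E[\abs{\Delta L^{\ell,m}_n}^2]\geq0$ appears, and that the cross terms vanish using only independence of increments and zero mean. Cross-mode correlations never enter, because mass and energy are sums of modal squared moduli.
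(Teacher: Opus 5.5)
Your proposal is correct and follows essentially the same route as the paper. You obtain the modal recursion $\E[\cM^{\ell,m}_{n+1}]\geq(1+\tau^2\lambda_\ell^2)\E[\cM^{\ell,m}_n]$ by removing the cross term via independence and zero mean of the increments, iterate it under the implicit restriction $\tau<1$, and sum over $\ell\geq1$ after discarding the nonnegative $(0,0)$ mode; your identity $\cE^{\ell,m}_n=\lambda_\ell\cM^{\ell,m}_n$ for part ii) is a compact form of the paper's eigendecomposition step, and stating the step-size restriction explicitly is a useful clarification.
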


\begin{appendixproof}[Proof of Proposition~\ref{prop:semi_EM}]
\begin{enumerate}[label=\roman*)]
\item \label{pf-spectral-mass} Inserting the forward Euler--Maruyama scheme~\eqref{eqn:SSE_EM} into the definition of the fully discretized mass, one obtains
\begin{align*}
    \E\left[\cM_{n+1}^{\ell,m}\right] &= \E\left[\left\lvert u_{n+1}^{\ell,m} \right\rvert^2\right] = \E\left[\left\lvert u_n^{\ell,m} + i\tau \lambda_\ell u_n^{\ell,m} - i\Delta L^{\ell, m}_n \right\rvert^2\right]\\
    &=\E\left[\cM_{n}^{\ell,m}\right] + \E\left[\left\lvert \tau \lambda_\ell u_{n}^{\ell,m}\right\rvert^2\right] + \E\left[\left\lvert \Delta L^{\ell, m}_n \right\rvert^2\right]\\
    & \geq (1+\tau^2 \lambda_\ell^2)\E\left[\cM_{n}^{\ell,m}\right],
\end{align*}
recalling that $\E\left[\left\lvert \Delta L^{\ell, m}_n \right\rvert^2\right] = \Var(\Delta L^{\ell, m}_n) = \tau v_{\ell, m} \geq 0$.

By induction, one then obtains the inequality
\begin{equation*}
    \E\left[\cM_{n}^{\ell,m}\right] \geq (1+\tau^2 \lambda_\ell^2)^n\E\left[\cM_{0}^{\ell,m}\right].
\end{equation*}
Let now $\ell \geq 1$. Then, for $\tau < 1$, the above can be estimated as follows
\begin{equation*}
    \E\left[\cM_{n}^{\ell,m}\right]
    \geq (1+\tau^2)^n\E\left[\cM_{0}^{\ell,m}\right] \geq \expp{\frac{1}{2}\tau^2 n}\E\left[\cM_{0}^{\ell,m}\right] = \expp{\frac{1}{2}\tau t_n}\E\left[\cM_{0}^{\ell,m}\right].
\end{equation*}
Summing over all $\ell \geq 1, m = -\ell, \dots, \ell$, one finally obtains
\begin{equation*}
    \E\left[\cM^\kappa_n - \cM^{0,0}_n\right] = \sum_{\ell=1}^{\kappa}\sum_{m=-\ell}^{\ell}\E\left[\cM^{\ell,m}_n\right] \geq \expp{\frac{1}{2}\tau t_n} \sum_{\ell=1}^{\kappa}\sum_{m=-\ell}^{\ell} \E\left[\cM_{0}^{\ell,m}\right].
\end{equation*}
Noting that $\cM^{0,0}_n \geq 0$ almost surely, the long-time behavior of the mass~\eqref{eqn:mass_EM} follows.

\item Similarly to Item~\ref{pf-spectral-mass}, for the energy, one obtains for $\ell \geq 1, m=-\ell, \dots, \ell$, the relation
    \begin{align*}
        \E\left[\cE^{\ell,m}_{n+1}\right] = \E\left[\norm{\nablalb^\kappa u_{n+1}^{\ell, m}Y_{\ell, m}}^2\right] = \E\left[\norm{\nablalb^\kappa \left(\Id - i\tau\Deltalb^\kappa\right)u_n^{\ell, m}Y_{\ell , m}}^2 + \norm{\nablalb^\kappa \Delta L_n^{\ell, m}Y_{\ell , m}}^2\right].
    \end{align*}
    Applying the eigendecomposition of the discretized Laplace--Beltrami operator yields
    \begin{align*}
        \E\left[\norm{\nablalb^\kappa u_{n+1}^{\ell, m}Y_{\ell, m}}^2\right] \geq \abs{1 + i\tau\lambda_\ell}^2\E\left[\norm{\nablalb^\kappa u_n^{\ell, m}Y_{\ell , m}}^2\right] \geq \left(1 + \tau^2\right)\E\left[\norm{\nablalb^\kappa u_n^{\ell, m}Y_{\ell , m}}^2\right].
    \end{align*}
    The exponential growth of the energy~\eqref{eqn:energy_EM} then follows as in Item~\ref{pf-spectral-mass} above.\qed
\end{enumerate}
\renewcommand{\qedsymbol}{}
\end{appendixproof}

We now turn our attention to the long-time behavior of the backward Euler--Maruyama method~\eqref{bEM} which reads
\begin{align}\label{eqn:BEM}
    u^\kappa_{n+1} = \left(\Id + i\tau \Deltalb^\kappa\right)^{-1}\left(u_n^\kappa - i\Delta L^\kappa_{n}\right)
\end{align}
or, decomposed, as the system of equations
\begin{align}\label{eqn:BEM_decomposed}
    u_{n+1}^{\ell, m} &= \frac{u_n^{\ell,m} - i\Delta L^{\ell, m}_n}{1 - i\tau \lambda_\ell} = \left(u_n^{\ell,m} - i\Delta L^{\ell, m}_n\right)\frac{1+i\tau\lambda_\ell}{1 + \tau^2 \lambda_\ell^2}
\end{align}
for $\ell = 0, \dots, \kappa, m = -\ell, \dots, \ell$ and $n \geq 0$.

The long-time behavior of the expected mass and energy of the backward Euler--Maruyama scheme~\eqref{eqn:BEM} are given by the next result.
\begin{proposition}
    \label{prop:semi_BEM}
    Consider the stochastic Schrödinger equation on the sphere~\eqref{eqn:SSE} with a (possibly complex-valued) $Q$-L\'evy process $L$ which is of trace class.
    Let $u^\kappa_n$ denote the fully-discrete approximation given by the backward Euler--Maruyama method~\eqref{eqn:BEM}.
    \begin{enumerate}[label=\roman*)]
    \item If the initial value $u_0 \in {L^2(\IS^2; \IC)}$ and $\E\left[\cM^\kappa_0\right] < \infty$, then the expected mass of $u^\kappa_n$ grows at most as
    \begin{equation}\label{eqn:mass_BEM_bound}
        \E\left[\cM^\kappa_n\right] \leq \E\left[\cM^\kappa_0\right] + \frac{\tr(Q^\kappa)}{\tau} + t_n v_{0,0},
    \end{equation}
    for every discrete time $t_n = n\tau, n\geq 0$. We recall that $v_{0,0} = \Var\left(L^{0, 0}(1)\right)$.

    Furthermore, one has the relation
    \begin{equation}\label{eqn:mass_BEM_limit}
        \lim_{n\rightarrow\infty} \frac{\E\left[\cM^\kappa_n\right]}{t_n}  = v_{0,0}.
    \end{equation}
     \item If $u_0 \in {H^1(\IS^2; \IC)}$ and $\E\left[\cE^\kappa_0\right] < \infty$, then the expected energy of $u^\kappa_n$ is bounded above by
    \begin{equation}\label{eqn:SSE_energy_BEM_bound}
        \E\left[\cE^\kappa_n\right] \leq \E\left[\cE^\kappa_0\right] + \frac{\tr\left(Q^\kappa(-\Deltalb^\kappa)\right)}{\tau},
    \end{equation}
    for every discrete time $t_n = n\tau, n\geq 0$. Therefore, one has the relation
    \begin{equation}\label{eqn:SSE_energy_BEM_limit}
        \lim_{n\rightarrow\infty} \frac{\E\left[\cE^\kappa_n\right]}{t_n}  = 0.
    \end{equation}
 \end{enumerate}
\end{proposition}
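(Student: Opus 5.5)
The plan is to argue mode by mode, as in the proof of Proposition~\ref{prop:energy-bem}, using the decomposed scheme~\eqref{eqn:BEM_decomposed}. For each $(\ell,m)$, the increment $\Delta L^{\ell,m}_n$ is independent of $u^{\ell,m}_n$ and has mean zero. Hence the cross terms vanish in expectation and
\begin{equation*}
\E\left[\cM^{\ell,m}_{n+1}\right]=\frac{1}{1+\tau^2\lambda_\ell^2}\left(\E\left[\cM^{\ell,m}_n\right]+\tau v_{\ell,m}\right),
\end{equation*}
since $\abs{1+i\tau\lambda_\ell}^2/(1+\tau^2\lambda_\ell^2)^2=(1+\tau^2\lambda_\ell^2)^{-1}$. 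For $\ell=0$ we have $\lambda_0=0$, so the recursion gives exactly $\E[\cM^{0,0}_n]=\E[\cM^{0,0}_0]+t_n v_{0,0}$.

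For $\ell\geq1$ we have $\lambda_\ell\geq 2\geq1$, so the factor is at most $(1+\tau^2)^{-1}$. Iterating as in Proposition~\ref{prop:energy-bem} and summing the geometric series gives
\begin{equation*}
\E\left[\cM^{\ell,m}_n\right]\leq(1+\tau^2)^{-n}\E\left[\cM^{\ell,m}_0\right]+\tau v_{\ell,m}\sum_{k\geq1}(1+\tau^2)^{-k}=(1+\tau^2)^{-n}\E\left[\cM^{\ell,m}_0\right]+\frac{v_{\ell,m}}{\tau}.
\end{equation*}
Summing over all modes, and bounding the $\ell=0$ noise contribution by $t_n v_{0,0}$ and the remaining $v_{\ell,m}/\tau$ by $\tr(Q^\kappa)/\tau$ (with $\tr(Q^\kappa)=\sum_{\ell\le\kappa}\sum_m v_{\ell,m}$), yields~\eqref{eqn:mass_BEM_bound}. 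For the limit~\eqref{eqn:mass_BEM_limit}, divide by $t_n$. The modes $\ell\geq1$ contribute a bounded quantity divided by $t_n$, which tends to $0$. The $\ell=0$ mode contributes $\E[\cM^{0,0}_0]/t_n+v_{0,0}\to v_{0,0}$. A matching lower bound is immediate because all mode terms are nonnegative.

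For the energy, I use the decomposition $\cE^{\ell,m}_n=\lambda_\ell\abs{u^{\ell,m}_n}^2$, which follows from $\norm{\nablalb Y_{\ell,m}}^2=\lambda_\ell$ via the vector spherical harmonics~\eqref{3d-onb}. Multiplying the mass recursion by $\lambda_\ell$ gives the same contraction with noise input $\tau\lambda_\ell v_{\ell,m}$. The key point is that the $\ell=0$ mode now carries zero energy, so there is no linearly growing term. The same geometric-series argument gives $\E[\cE^\kappa_n]\le\E[\cE^\kappa_0]+\tau^{-1}\sum_{\ell,m}\lambda_\ell v_{\ell,m}$.

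The step that needs care is identifying $\sum_{\ell,m}\lambda_\ell v_{\ell,m}$ with $\tr(Q^\kappa(-\Deltalb^\kappa))$. Here $v_{\ell,m}$ are the diagonal entries $\langle QY_{\ell,m},Y_{\ell,m}\rangle$ in the spherical-harmonic basis. Since $-\Deltalb^\kappa$ is diagonal in that basis, the trace equals $\sum_{\ell,m}\lambda_\ell\langle QY_{\ell,m},Y_{\ell,m}\rangle$ even when $Q$ is not diagonal. In the complex-valued case the same holds, with $\E\abs{\Delta L^{\ell,m}_n}^2=\tau v_{\ell,m}$.

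Finally, since the bound~\eqref{eqn:SSE_energy_BEM_bound} is uniform in $n$, dividing by $t_n$ gives the limit~\eqref{eqn:SSE_energy_BEM_limit}.
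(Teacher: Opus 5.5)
Your proof is correct and follows the paper's argument essentially step by step. It uses the same ingredients: the mode-wise recursion $\E[\cM^{\ell,m}_{n+1}]=(1+\tau^2\lambda_\ell^2)^{-1}(\E[\cM^{\ell,m}_n]+\tau v_{\ell,m})$, the geometric-series bound $v_{\ell,m}/\tau$ for $\ell\geq1$, and the exact linear growth $\E[\cM^{0,0}_n]=\E[\cM^{0,0}_0]+t_n v_{0,0}$. For the energy, it likewise uses the weight $\lambda_\ell$, which removes the $\ell=0$ mode, together with the identity $\lambda_\ell v_{\ell,m}=\langle Q^\kappa(-\Deltalb^\kappa)Y_{\ell,m},Y_{\ell,m}\rangle$.
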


\begin{appendixproof}[Proof of Proposition~\ref{prop:semi_BEM}]
\begin{enumerate}[label=\roman*)]
    \item \label{pf-bem-mass} Similar computations as in the proof of Proposition~\ref{prop:semi_EM} show that the expected mass of the backward Euler--Maruyama scheme~\eqref{eqn:BEM_decomposed} reads
\begin{align*}
     \E\left[\cM_{n+1}^{\ell,m}\right]&=\E\left[\left\lvert u_{n+1}^{\ell,m} \right\rvert^2\right] = \E\left[\left\lvert \left(1 - i\tau \lambda_\ell\right)^{-1}\left(u_n^{\ell,m} - i\Delta L^{\ell, m}_n\right)\right\rvert^2\right] \\
    & = \frac{1}{1 + \tau^2 \lambda_\ell^2}\left(\E\left[\left\lvert u_n^{\ell,m}\right\rvert^2 \right] + \E\left[\left\lvert \Delta L^{\ell, m}_n\right\rvert^2\right]\right)\\
    & = \frac{1}{1 + \tau^2 \lambda_\ell^2}\left(\E\left[\cM_n^{\ell,m} \right] + \tau v_{\ell, m}\right)\\
    & \leq \frac{1}{1 + \tau^2}\left(\E\left[\cM_n^{\ell,m} \right] + \tau v_{\ell, m}\right).
\end{align*}
An iteration of this procedure then yields
\begin{align*}
    \E\left[\cM_n^{\ell,m}\right] \leq \frac{1}{\left(1+\tau^2\right)^n}\E\left[\cM_0^{\ell,m}\right] + v_{\ell,m}\sum_{j=1}^{n}\frac{\tau }{\left(1+\tau^2\right)^j}.
\end{align*}
Bounding the geometric sum, one arrives at the estimates
\begin{align*}
    \E\left[\cM_n^{\ell,m}\right] \leq \E\left[\cM_0^{\ell,m}\right] + \frac{v_{\ell, m}}{\tau}.
\end{align*}
Summing over $\ell \geq 1$ and $m = -\ell, \dots, \ell$, as in the proof of Proposition~\ref{prop:semi_EM}, one obtains
\begin{equation}\label{eqn:mass_BEM_bound_lgeq1}
            \E\left[\cM^\kappa_n - \cM_n^{0,0}\right] \leq \E\left[\cM_0 - \cM_0^{0,0}\right] + \frac{\tr(Q^\kappa)}{\tau}.
\end{equation}
Lastly, we consider the mass for the component $\ell=0$ and note that it increases linearly in time:
\begin{equation*}
    \E\left[\cM_n^{0,0}\right] = \E\left[\cM_0^{0,0}\right] + t_n v_{0,0}.
\end{equation*}
Adding this equation on both sides of equation~\eqref{eqn:mass_BEM_bound_lgeq1} yields the claimed long-time behavior of the mass~\eqref{eqn:mass_BEM_bound} as well as equation~\eqref{eqn:mass_BEM_limit}.

 \item The components of the discretized energy are given, for $\ell \geq 1, m = -\ell, \dots, \ell$, by
    \begin{align*}
        \E\left[\norm{\nablalb^\kappa u_{n+1}^{\ell, m}}^2\right] &= \E\left[\norm{\nablalb^\kappa(\Id + i \tau \Deltalb^\kappa)^{-1}\left(u_n^{\ell, m}Y_{\ell , m} - i\Delta L^{\ell, m}_n Y_{\ell , m}\right)}^2\right]\\
        & = \frac{1}{1 + \tau^2\lambda_\ell^2}\E\left[\norm{\nablalb^\kappa u_n^{\ell, m} Y_{\ell , m}}^2 + \norm{\nablalb^\kappa \Delta L^{\ell, m}_nY_{\ell , m}}^2\right]\\
        & \leq \frac{1}{1+\tau^2}\left(\E\left[\cE_n^{\ell, m}\right] + \tau v_{\ell, m} \norm{\sqrt{\lambda_\ell}\bPhi_{\ell, m}}^2\right).
    \end{align*}
    Using that $\lambda_\ell v_{\ell, m} = \langle Q^\kappa (-\Deltalb^\kappa) Y_{\ell , m}, Y_{\ell , m}\rangle$,
    one finishes the proof as in Item~\ref{pf-bem-mass} and obtains the estimates
    \begin{equation*}
        \E\left[\cE^\kappa_n\right] \leq \E\left[\cE^\kappa_0\right] + \frac{\tr\left(Q^\kappa(-\Deltalb^\kappa)\right)}{\tau}.
    \end{equation*}
    Note that the energy of the mode $\ell = 0$ satisfies $\E\left[\cE_n^{0, 0}\right]=0$.\qed
\end{enumerate}
\renewcommand{\qedsymbol}{}
\end{appendixproof}

In this subsection, we show the long-time behavior, with respect to the mass, energy, and momentum, of the stochastic exponential Euler scheme~\eqref{expEM}, which reads
\begin{align}
        \label{eqn:exp_Euler_SSE}
    u^\kappa_{n+1} = \expp{-i\tau\Deltalb^\kappa} u^\kappa_n - i\expp{-i\tau\Deltalb^\kappa}\Delta L^\kappa_{n}.
\end{align}
The following proposition shows that the stochastic exponential Euler method~\eqref{eqn:exp_Euler_SSE} captures the correct evolution of these quantities,
see the corresponding result for the spatially semi-discrete solution in Proposition~\ref{prop:semi}.
\begin{proposition}
    \label{prop:semi_exp}
    Consider the stochastic Schrödinger equation on the sphere~\eqref{eqn:SSE} with a (possibly complex-valued) $Q$-L\'evy process $L$ which is of trace class.
    Let $u^\kappa_n$ denote the fully-discrete approximation given by the stochastic exponential Euler method~\eqref{eqn:exp_Euler_SSE}.
    \begin{enumerate}[label=\roman*)]
    \item If the initial value $u_0 \in {L^2(\IS^2; \IC)}$ and $\E\left[\cM^\kappa_0\right] < \infty$, then
    $u^\kappa_n$ satisfies the trace formula for the mass:
    \begin{equation}\label{longexpM}
        \E\left[\cM^\kappa_n\right] = \E\left[\cM^\kappa_0\right] + t_n\tr(Q^\kappa),
    \end{equation}
    for every discrete time $t_n = n\tau, n \geq 0$.
    \item If $u_0 \in {H^1(\IS^2; \IC)}$ and $\E\left[\cE^\kappa_0\right] < \infty$, then
    $u^\kappa_n$ satisfies the trace formula for the energy:
    \begin{equation}\label{longexpE}
        \E\left[\cE^\kappa_n\right] = \E\left[\cE^\kappa_0\right] + t_n\tr\left(\cP_\kappa Q^{1/2}(-\Deltalb^\kappa) \cP_\kappa Q^{1/2}\right),
    \end{equation}
    for every discrete time $t_n = n\tau, n \geq 0$.
    \item If $u_0 \in {H^1(\IS^2; \IC)}$, then $u^\kappa_n$ satisfies the trace formula for the momentum:
\begin{equation}\label{longexpMom}
	\E\left[p^\kappa(u^\kappa_n)\right] = \E\left[p^\kappa(u^\kappa_0)\right] -2t_n\operatorname{Im}\left(\left<\left<\cP_\kappa Q^{1/2},\nablalb^\kappa \cP_\kappa Q^{1/2}\right>\right>_{HS}\right)
\end{equation}
    for every discrete time $t_n = n\tau, n \geq 0$.

If, in addition, $Q^\kappa$ is diagonalized by the (real-valued) spherical harmonics $Y_{\ell,m}$ or $L$ is a real-valued $Q$-L\'evy process, then we have
$$
\E\left[p^\kappa(u^\kappa_n)\right] = \E\left[p^\kappa(u^\kappa_0)\right].
$$
\end{enumerate}
\end{proposition}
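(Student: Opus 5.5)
The plan is to derive a one-step recursion for each quantity and sum it over $n$, following the pattern of Proposition~\ref{prop:energy-trigo}. The key observation is that the noise term of the scheme~\eqref{eqn:exp_Euler_SSE} is a stochastic integral over one step. Indeed,
\[
-i\expp{-i\tau\Deltalb^\kappa}\Delta L^\kappa_n=-i\int_{t_n}^{t_{n+1}}\expp{-i\tau\Deltalb^\kappa}\cP_\kappa\dd L(s).
\]
This integral is independent of $\mathcal F_{t_n}$, and hence of $u^\kappa_n$, and it has mean zero. So in every quadratic quantity the cross terms between $\expp{-i\tau\Deltalb^\kappa}u^\kappa_n$ and the increment vanish in expectation. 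This uses the martingale property cited in the proof of Proposition~\ref{prop:exact}.

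For the mass, I would use that $\expp{-i\tau\Deltalb^\kappa}$ is unitary on $H^\kappa$ (the finite-dimensional analogue of~\eqref{eqn:SSE_SG_isometry}). It\^o's isometry then gives
\[
\E\left[\cM^\kappa_{n+1}\right]=\E\left[\cM^\kappa_n\right]+\int_{t_n}^{t_{n+1}}\norm{\expp{-i\tau\Deltalb^\kappa}\cP_\kappa Q^{1/2}}_{HS}^2\dd s=\E\left[\cM^\kappa_n\right]+\tau\tr(Q^\kappa).
\]
Recursing over $n$ yields~\eqref{longexpM}. Alternatively, this can be done modewise: $\E|u^{\ell,m}_{n+1}|^2=\E|u^{\ell,m}_n|^2+\tau v_{\ell,m}$, since $|e^{i\tau\lambda_\ell}|=1$.

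For the energy, I would proceed the same way, using that $\nablalb^\kappa$ commutes with $\expp{-i\tau\Deltalb^\kappa}$ on $H^\kappa$. This holds because $\nablalb Y_{\ell,m}=\sqrt{\lambda_\ell}\bPhi_{\ell,m}$ and the exponential acts on each $Y_{\ell,m}$ as the scalar $e^{i\tau\lambda_\ell}$. Together with the identity $(\nablalb^\kappa)^\ast\nablalb^\kappa=-\Deltalb^\kappa$ on $H^\kappa$ from the proof of Proposition~\ref{prop:semi}, this gives
\[
\E\left[\cE^\kappa_{n+1}\right]=\E\left[\cE^\kappa_n\right]+\tau\tr\left((\cP_\kappa Q^{1/2})^\ast(-\Deltalb^\kappa)\cP_\kappa Q^{1/2}\right),
\]
and hence~\eqref{longexpE}.

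For the momentum, I would expand $p^\kappa(u^\kappa_{n+1})$ into four terms, as in part iii) of the proof of Proposition~\ref{prop:exact}. The mixed terms vanish in expectation by independence and zero mean. For the deterministic term, the commutation of $\nablalb^\kappa$ with the unitary group gives
\[
\int_{\IS^2}\expp{-i\tau\Deltalb^\kappa}u^\kappa_n\,\nablalb^\kappa\overline{\expp{-i\tau\Deltalb^\kappa}u^\kappa_n}\dd x=\int_{\IS^2}u^\kappa_n\nablalb^\kappa\overline{u^\kappa_n}\dd x,
\]
argued componentwise exactly as before. The noise–noise term is, by the polarized It\^o isometry,
\[
\tau\left<\left<\expp{-i\tau\Deltalb^\kappa}\cP_\kappa Q^{1/2},\nablalb^\kappa\expp{-i\tau\Deltalb^\kappa}\cP_\kappa Q^{1/2}\right>\right>_{HS}=\tau\left<\left<\cP_\kappa Q^{1/2},\nablalb^\kappa\cP_\kappa Q^{1/2}\right>\right>_{HS},
\]
again by unitarity and commutation. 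Subtracting the conjugate term and recursing gives~\eqref{longexpMom}. The conservation statement then follows verbatim from the argument in Proposition~\ref{prop:exact}: in the real case the bracket is real, and in the diagonal case one obtains $\int_{\IS^2}\frac12\nablalb Y_{\ell,m}^2\dd x=0$.

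The main obstacle is the momentum step, and specifically justifying that $\nablalb^\kappa=\cP_\kappa\nablalb$ commutes with $\expp{-i\tau\Deltalb^\kappa}$. This matters because $\nablalb$ maps into vector fields, where $\cP_\kappa$ acts componentwise. I would handle it via the vector spherical harmonic expansion~\eqref{3d-onb}, checking that $\cP_\kappa\bPhi_{\ell,m}$ carries the same phase $e^{i\tau\lambda_\ell}$ as $Y_{\ell,m}$. Failing that, I would simply reuse the corresponding step already invoked in Proposition~\ref{prop:semi}.
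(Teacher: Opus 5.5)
Parts i) and ii) are fine and follow the paper's route: one-step recursion, independence and zero mean of the increment, It\^o's isometry, and unitarity of $\expp{-i\tau\Deltalb^\kappa}$. Part ii) rests only on the identity $(\nablalb^\kappa)^\ast\nablalb^\kappa=-\Deltalb^\kappa$ on $H^\kappa$ that you take from Proposition~\ref{prop:semi}. Your increment with $(\cP_\kappa Q^{1/2})^\ast$ is what It\^o's isometry actually gives; the displayed formula agrees with it when $Q$ commutes with $\cP_\kappa$.

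Part iii) has a genuine gap at exactly the step you flagged, and your fallback does not help. The paper's proof of iii) defers to the proof of Proposition~\ref{prop:exact}, which uses the same identity $\int_{\IS^2}\expp{-it\Deltalb}u_0\,\partial_j\overline{\expp{-it\Deltalb}u_0}\dd x=\int_{\IS^2}u_0\,\partial_j\overline{u_0}\dd x$, with $\partial_j$ the $j$-th Cartesian component of $\nablalb$. This commutation is false. Write $Y_{\ell,m}=F|_{\IS^2}$ with $F$ a harmonic polynomial, homogeneous of degree $\ell$. Euler's identity gives $\partial_jY_{\ell,m}=(\partial F/\partial x_j)|_{\IS^2}-\ell x_jY_{\ell,m}$, which lies in the span of the spherical harmonics of degrees $\ell-1$ and $\ell+1$, not $\ell$. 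Hence $\int_{\IS^2}Y_{\ell',m'}\,\partial_jY_{\ell,m}\dd x\neq0$ only if $\ell'=\ell\pm1$. Under $u\mapsto\expp{-i\tau\Deltalb^\kappa}u$, the corresponding term of $\int_{\IS^2}u\,\partial_j\overline{u}\dd x$ is multiplied by $e^{i\tau(\lambda_{\ell'}-\lambda_\ell)}$, which is not $1$ for generic $\tau$.

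Your check that $\bPhi_{\ell,m}$ carries the phase $e^{i\tau\lambda_\ell}$ is correct but does not address this. It gives $\norm{\nablalb\expp{-i\tau\Deltalb^\kappa}u}=\norm{\nablalb u}$, which is why ii) works. The momentum, however, pairs the Cartesian components of $\bPhi_{\ell,m}$ with $Y_{\ell\pm1,m'}$, and these carry the different phases $e^{i\tau\lambda_{\ell\pm1}}$. The same defect affects your noise--noise identity unless $Q^\kappa$ is diagonal in the $Y_{\ell,m}$, in which case the phases cancel mode by mode.

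In fact no argument can close this gap, because iii) fails as stated. Take any $\kappa\geq1$ and $L\equiv0$ (or any real-valued $L$ with $Q^\kappa$ diagonal, whose contribution to the expected momentum vanishes). Take $u_0=Y_{0,0}+Y_{1,0}$, so that $u^\kappa_n=Y_{0,0}+e^{2it_n}Y_{1,0}$. On the unit sphere the surface divergence theorem reads $\int_{\IS^2}\nablalb f\dd x=2\int_{\IS^2}f(x)\,x\dd x$, which does not vanish in general. (For $f=Y_{\ell,m}^2$ it does vanish, but by parity, not merely because $\IS^2$ has no boundary.) With $Y_{1,0}=\sqrt{3/(4\pi)}\,x_3$ and $e_3=(0,0,1)$, one gets $\int_{\IS^2}Y_{0,0}\nablalb Y_{1,0}\dd x=\frac{2}{\sqrt3}e_3$ and $\int_{\IS^2}Y_{1,0}\nablalb Y_{1,0}\dd x=0$. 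The same values hold with $\nablalb^\kappa$, since $Y_{0,0},Y_{1,0}\in H^\kappa$. Hence $p^\kappa(u^\kappa_n)=\frac{4}{\sqrt3}\sin(2t_n)\,e_3$, which is not constant in $n$. So both the trace formula and the conservation claim fail for the linear momentum.

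What your argument (and the paper's) does establish is the same statement with $\nablalb$ replaced by $x\times\nablalb$, i.e.\ for the angular momentum. Its components generate rotations, so they commute with $\Deltalb$, $\cP_\kappa$ and $\expp{-i\tau\Deltalb^\kappa}$, and your steps then go through unchanged.
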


\begin{appendixproof}[Proof of Proposition~\ref{prop:semi_exp}]
    \begin{enumerate}[label=\roman*)]
    \item \label{pf-exp-mass} In order to compute the mass of the stochastic exponential Euler scheme~\eqref{eqn:exp_Euler_SSE}, we rewrite this numerical scheme as
   \begin{equation}\label{expNEWdef}
        u^\kappa_{n+1} = \expp{-i\tau\Deltalb^\kappa} u^\kappa_n - i \int_{t_n}^{t_{n+1}} \expp{-i\tau\Deltalb^\kappa}\cP_\kappa \dd L(s)
   \end{equation}
   and directly obtain
    \begin{align*}
    \E\left[\cM^\kappa_{n+1}\right]&=\E\left[\norm{u^\kappa_{n+1}}^2\right] = \E\left[\cM^\kappa_n\right] + {\tau}\tr(Q^\kappa).
    \end{align*}
    The result~\eqref{longexpM} on the correct long-time behavior of the mass then follows by induction on the index $n$.
\item The long-time behavior of the energy~\eqref{longexpE} is shown as in Item~\ref{pf-exp-mass}.
\item To show the long-time behavior of the momentum, we note that for the stochastic integral part in the time integrator~\eqref{expNEWdef},
we have
\begin{align*}
	\mathbb E &\left[i\int_{\IS^2} \int_{t_n}^{t_{n+1}}\expp{-i\tau\Deltalb^\kappa} \cP_\kappa \dd L(s)\int_{t_n}^{t_{n+1}}\nablalb^\kappa \expp{-i\tau\Deltalb^\kappa} \cP_\kappa \dd L(s)\right]\\
	&=i\int_{t_n}^{t_{n+1}}\left<\left<\expp{-i\tau\Deltalb^\kappa}\cP_\kappa Q^{1/2},\nablalb^\kappa\expp{-i\tau\Deltalb^\kappa}\cP_\kappa Q^{1/2}\right>\right>_{HS}\dd L(s)
\end{align*}
and the long-time behavior of the momentum~\eqref{longexpMom} follows as in the corresponding proof for the exact solution given in  Proposition~\ref{prop:exact}.\qed
\end{enumerate}
\renewcommand{\qedsymbol}{}
\end{appendixproof}

\subsection{Numerical experiments}\label{sch-num}
We conclude this section with some numerical experiments in order to illustrate the above results on the long-time behavior of the expected energy and mass.
Implementing a simulation for the momentum would be highly non-trivial since it requires the numerical evaluation of the integrals $\int_{\IS^2} Y_{\ell , m}(x) \nablalb \overline{Y_{\ell , m}(x)} \dd x$. This is outside of the scope of this article.

We simulate $M=10000$ Monte Carlo samples of the numerical solutions of the stochastic Schrödinger equation on the sphere~\eqref{eqn:SSE}
up to final time $T=3$ using the stochastic exponential Euler scheme~\eqref{eqn:exp_Euler_SSE},
the forward Euler--Maruyama scheme~\eqref{eqn:SSE_EM}, and the backward Euler--Maruyama scheme~\eqref{eqn:BEM}, respectively. The number of time steps is taken to be $N=300$. The truncation parameter for our spatial discretization is taken to be $\kappa = 2^3$. 
As initial values, we choose the Gaussian random field
$$
v = \sum_{\ell=0}^{\kappa} \sum_{m=-\ell}^{\ell} \left(\ell^{-\gamma} u_{\ell , m}+i\ell^{-\gamma+1} z_{\ell , m}\right) Y_{\ell , m}
$$
where $\gamma=3+10^{-5}$ (setting $0^{-x}=1$ for $x>0$), and where $u_{\ell , m}$ and $z_{\ell , m}$ are iid standard Gaussian random variables.
For the L\' evy noise,
we choose $L=\sum_{\ell=0}^{\kappa} \sum_{m=-\ell}^{\ell} A_\ell L_{\ell, m} Y_{\ell , m}$, for independent Lévy processes $L_{\ell,m}$,
given by $L_{\ell, m}(t) = (W_{\ell, m}(t) + P_{\ell, m}(t) - t) / \sqrt{2}$.
That is, $L_{\ell, m}$ is a linear combination of a standard Brownian motion $W_{\ell, m}(t)$ and a compensated Poisson process $P_{\ell, m}(t) - t$ that is independent of $W_{\ell, m}$. This choice implies, by \cite{marinucciRandomFieldsSphere2011a}, that the covariance operator of the truncated noise $Q^\kappa$ is the covariance operator of an isotropic random field.
Furthermore, we choose $A_0 = 1$ and $A_\ell = \ell^{-4}$ for $\ell = 1, \dots, \kappa$. 

The results of these simulations are shown in Figure~\ref{fig:masses}.
In the left part of this figure, it can be observed that the expected mass in the stochastic exponential Euler method~\eqref{eqn:exp_Euler_SSE} behaves as
the expected mass of the exact solution, hence confirming Proposition~\ref{prop:semi_exp}.
The forward Euler--Maruyama scheme~\eqref{eqn:SSE_EM}, however, exhibits the exponential increase in mass that was shown in Proposition~\ref{prop:semi_EM}. Lastly, the backward Euler--Maruyama method~\eqref{eqn:BEM} shows a mass that grows more slowly than that of the exact solution as shown in Proposition~\ref{prop:semi_BEM}.

\begin{figure}[h!]
    \centering
    \begin{subfigure}[t]{0.5\textwidth}
        \centering
        \includegraphics[width=\textwidth]{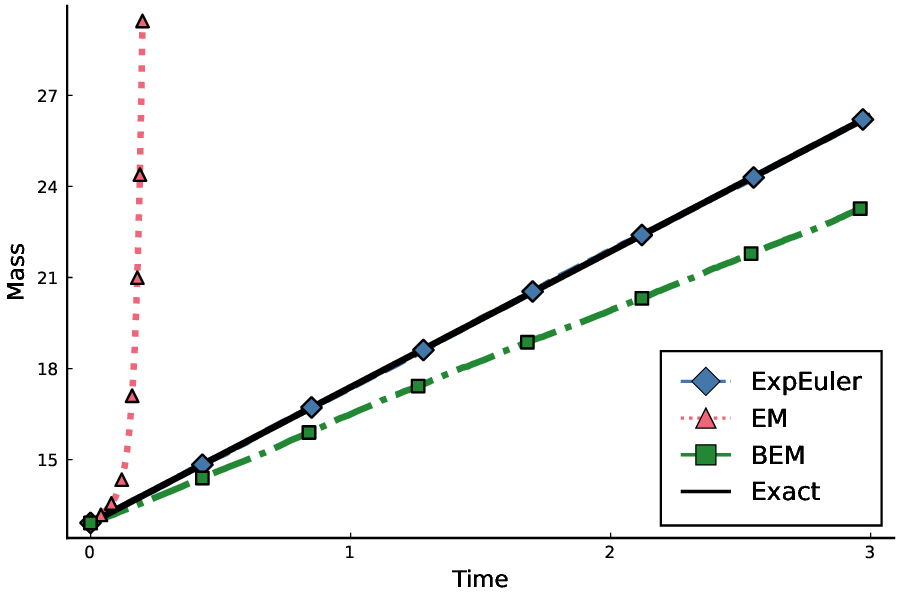}
        \caption{$T=3$}
        \label{fig:mass}
    \end{subfigure}%
    \begin{subfigure}[t]{0.5\textwidth}
        \centering
        \includegraphics[width=\textwidth]{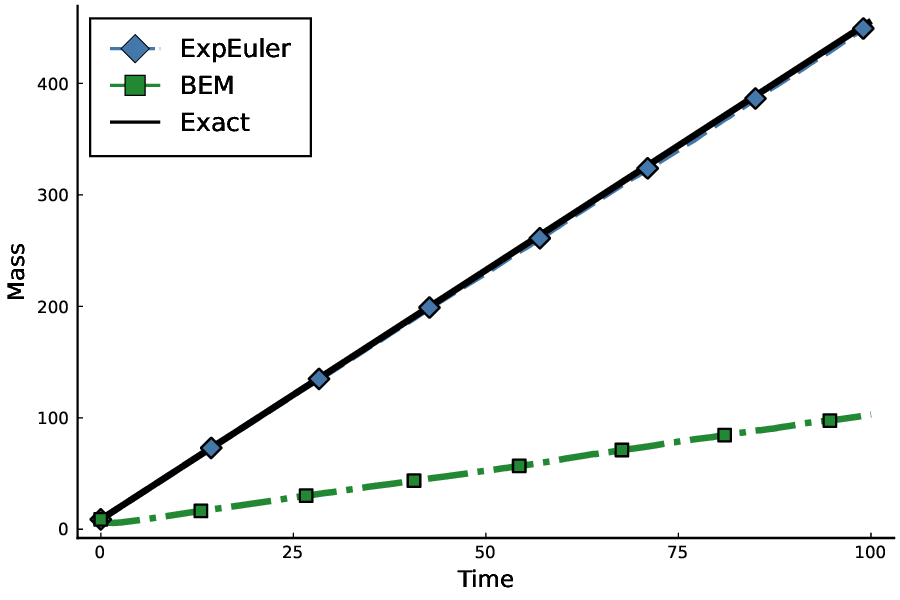}
        \caption{$T=100$}
        \label{fig:mass_longtime}
    \end{subfigure}
    \caption{Expected mass of the stochastic Schrödinger equation on the sphere~\eqref{eqn:SSE}:
    The stochastic exponential Euler scheme~\eqref{eqn:exp_Euler_SSE} ({\upshape ExpEuler}),
    the forward Euler--Maruyama scheme~\eqref{eqn:SSE_EM} ({\upshape EM}), and the backward Euler--Maruyama scheme~\eqref{eqn:BEM} ({\upshape BEM}).}
    \label{fig:masses}
\end{figure}

We further performed a long-time simulation of the behavior of the mass of the stochastic exponential Euler and backward Euler--Maruyama methods.
Due to the rapid explosion of the mass in the forward Euler--Maruyama method, we exclude it in this plot.
We choose the same parameters as in the previous simulation besides choosing the final time $T=100$.
This means that, since the number of time steps $N=300$, $\tau\approx 0.33$. Figure~\ref{fig:mass_longtime} shows
once again the correct long-time behavior of the stochastic exponential Euler scheme and the incorrect behavior of the backward Euler--Maruyama
scheme.

We finally present a simulation for the expected energy. We take the same parameters as in the previous experiment.
The results can be seen in Figure~\ref{fig:SSE_energies}. What contrasts this figure from the behavior of the mass in Figure~\ref{fig:masses}
is that we observe the boundedness of the expected energy for the backward Euler--Maruyama scheme, see Figure~\ref{fig:SSE_energy_longtime}.

\begin{figure}[h!]
    \centering
    \begin{subfigure}[t]{0.5\textwidth}
        \centering
        \includegraphics[width=\textwidth]{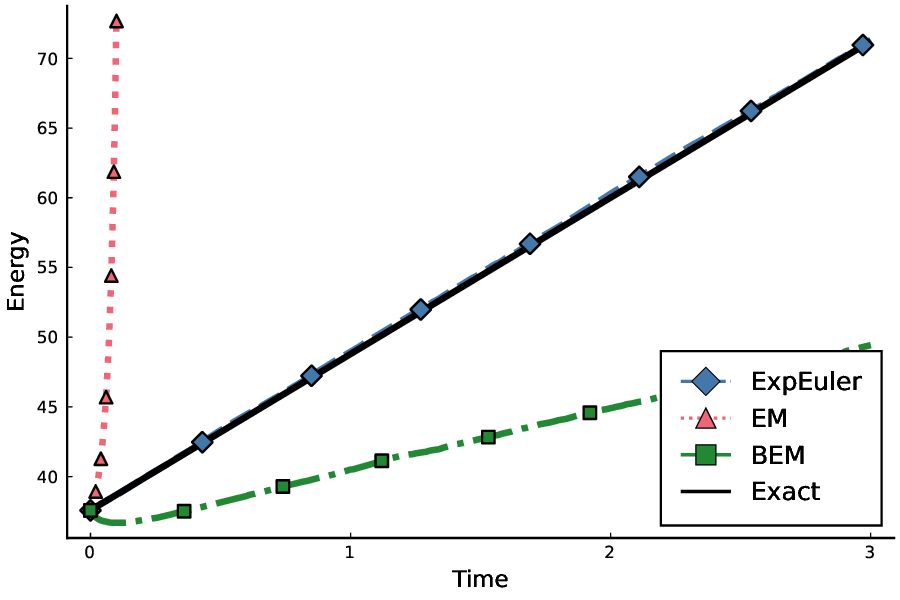}
        \caption{$T=3$}
        \label{fig:SSE_energy}
    \end{subfigure}%
    \begin{subfigure}[t]{0.5\textwidth}
        \centering
        \includegraphics[width=\textwidth]{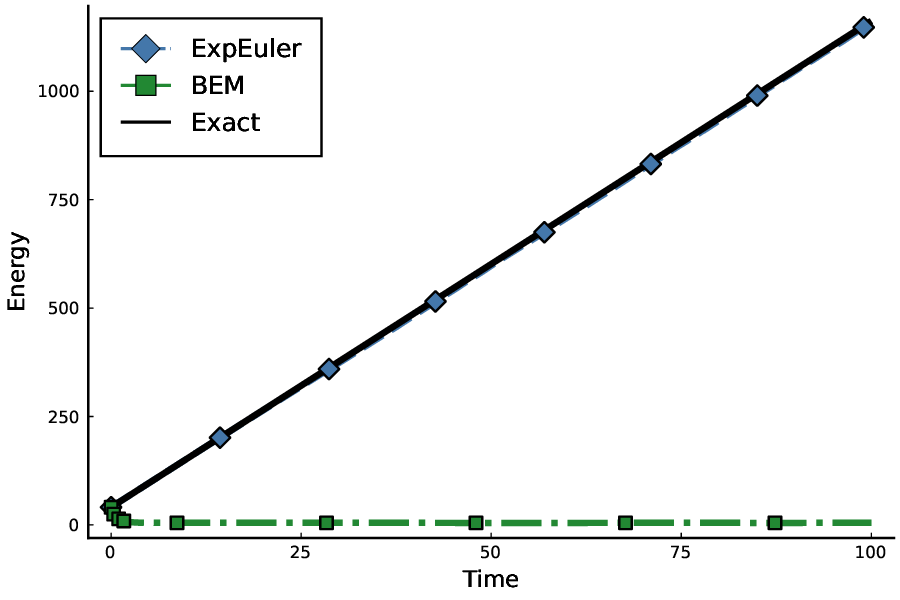}
        \caption{$T=100$}
        \label{fig:SSE_energy_longtime}
    \end{subfigure}
    \caption{Expected energy of the stochastic Schrödinger equation on the sphere~\eqref{eqn:SSE}: The stochastic exponential Euler scheme~\eqref{eqn:exp_Euler_SSE} ({\upshape ExpEuler}),
    the forward Euler--Maruyama scheme~\eqref{eqn:SSE_EM} ({\upshape EM}), and the backward Euler--Maruyama scheme~\eqref{eqn:SSE_EM} ({\upshape BEM}).}
    \label{fig:SSE_energies}
\end{figure}

\section{Stochastic Maxwell's equations on the sphere}\label{sect-maxeq}
\begin{toappendix}
  \label{apx:Maxwell}
\end{toappendix}
Maxwell's equations form the foundation of classical electromagnetism and are
traditionally formulated in three-dimensional Euclidean space or, more
generally, in four-dimensional spacetime, see for example \cite{CabralLobo2017,flanders1963differential,Milani1988}.
Under these usual formulations, Maxwell's equations are naturally defined on oriented Riemannian or Lorentzian
manifolds and do not rely on a specific choice of coordinates
\cite{CabralLobo2017,Hiptmair_2002,Munshi2022Complex,shi70,WECK1974410}.

This geometric perspective raises the question of whether Maxwell's equations can be meaningfully posed on manifolds of dimension lower than three. In particular, if it is possible to investigate such equations with Hamiltonian structure intrinsically on the unit sphere $\IS^2$, endowed with the geodesic metric, as a compact, boundaryless, two-dimensional Riemannian manifold. At first glance, the reduction in dimension appears problematic: familiar vector operators such as the three-dimensional $\operatorname{curl}$ do not exist intrinsically on $\IS^2$, and the physical interpretation as an electromagnetic field becomes less direct. 
Nevertheless,
Maxwell-type equations can still be defined consistently on the sphere via its embedding into $\R^3$.

Inspired by results such as \cite{1138693,MR4739347}, we therefore consider electromagnetic fields tangent to the sphere and we formulate Maxwell's equations in differential form intrinsically on the surface. 
While such equations no longer model physical electromagnetism in free space, they retain the mathematical structure of Maxwell's equations. Such formulations arise naturally in numerical analysis and in mathematical studies of Maxwell-type boundary problems \cite{MR3635830,MR4447703,Hiptmair_2002,MR4410964,MR4739347}.

In this section, we first define the proper phase space where 
the solutions of the stochastic Maxwell's equations in a vacuum on
the sphere~\eqref{maxeq} will live. We then investigate the long-time behavior of the expected energy of the solution to this stochastic Hamiltonian system.
Next, we study the spectral discretization of the SPDE~\eqref{maxeq}. Inspired by \cite{MR4077824}, the long-time behavior of the three time integrators is then presented. Finally, we provide numerical experiments to support and illustrate our theoretical results.
The proofs of the results in this section are given in Appendix~\ref{apx:Maxwell}.

\subsection{Exact solution}

We first need to
define the proper phase space where the solutions of the SPDE will live.
We consider the space $\mathcal{H}=L^2(\IS^2,\R^3)^{2}$,
which has the orthonormal basis given by the vector spherical harmonics~\eqref{3d-onb}.
We recall that, in a deterministic setting, Maxwell's equations on the sphere can be written formally
as
\begin{align}\label{detmax}
		\begin{split}
	\frac{\text{\large d}}{\text{\large dt}} \begin{pmatrix}
			E(t) \\ H(t)
		\end{pmatrix} &= \begin{pmatrix}
			\operatorname{curl}_{\IS^2} H(t) \\ -\operatorname{curl}_{\IS^2} E(t)
		\end{pmatrix},\\
		\operatorname{div}_{\IS^2}(E(t))&=0,\\
		\operatorname{div}_{\IS^2}(H(t))&=0,
		\end{split}
\end{align}
where $(E,H)\in \mathcal H$ represent the electric and magnetic field, respectively. In this formulation, we assume that permittivity and permeability are both equal to one, i.\,e. $\varepsilon=\mu\equiv1$ and that Gauss's law is respected, i.\,e. the divergence of the electric field, $\operatorname{div}_{\IS^2}(E)$, is zero.

Following the works \cite{MR3635830,FANG2025855,MR4739347}, we consider the transverse electric (TE) mode. That is, we assume that the electric field $E$ is tangential to the sphere $\IS^2$ and the magnetic field $H$ is in the direction
of the outward normal of the sphere. As such, we can restrict the space $\mathcal{H}$ by defining a subspace, which is still a Hilbert space, where the solutions to \eqref{detmax} naturally live. For this reason, we consider the space
\begin{equation}\label{spaceME}
	\mathcal{H}_0=L^2(\IS^2,T\IS^2)\times L^2(\IS^2),
\end{equation}
where the first space is the space of square-integrable tangential vector fields, representing the field $E$, and the second space is the usual space of square-integrable scalar fields, representing the normal component of $H$. Here, $T\IS^2$ denotes the usual tangent bundle on the sphere, i.\,e.,
$T\IS^2=\bigcup_{x\in\IS^2}\{x\}\times T_x\IS^2$, where we identify the tangent space as the plane in $\R^3$ tangent to the point $x\in \IS^2$, that is
$$
T_x\IS^2=\{v\in\R^3| v\cdot x=0\}.
$$
This means that we consider solutions $(E,H)\in\mathcal{H}_0$ such that $E\colon\IS^2\to\R^3$ and $E(x)\cdot x=0$ for all points $x$ on the sphere, and $H\colon\IS^2\to\R$.

\begin{remark}
	We have defined Maxwell's equations on the sphere considering solutions $E,H\in L^2(\IS^2,\R^3)$.
	Due to the choice of the TE mode and the geometry of the problem, we note that the vector field $E(x)$ is tangential to the point $x\in\IS^2$. This means that
	$$
	L^2(\IS^2,T\IS^2)\hookrightarrow L^2(\IS^2,\R^3).
	$$
	Furthermore, the magnetic field $H$ is normal to the surface, hence only its radial component is present. For this reason, we identify the magnetic field $H$ with its last component, the radial component and work on $L^2(\IS^2,\R)$.
\end{remark}

After these preparations, we can then write the Hamiltonian operator in equation~\eqref{detmax} as
\begin{align}\label{operatorME}
\mathcal{A}=\left(\begin{matrix}
	&0 \quad &\operatorname{curl}_{\IS^2}\\
	&-\operatorname{curl}_{\IS^2} \quad &0
\end{matrix}\right).
\end{align}
Following from \cite{monk2003finiteElement}, we have $D(\mathcal A)=H^1_{\operatorname{curl}}(\IS^2,T\IS^2)\times H^1_{\operatorname{curl}}(\IS^2)$, which, in the divergence-free setting reduces to $H^1(\IS^2,T\IS^2)\times H^1(\IS^2)$.
Considering the TE mode,
we note that the operator on the top right in~\eqref{operatorME} is a ``vector curl of a scalar'', i.\,e.,
$$
\operatorname{curl}_{\IS^2} H= \nabla\times (H\hat{{r}})|_{T\IS^2}=-\hat{n}\times(\nabla_{\IS^2}H),
$$
based on the identification of the magnetic scalar field with is radial component as a vector in $\R^3$.
We use the notation $\cdot |_{T {\IS^{2}}}$ to refer to the orthogonal projection onto the tangent bundle of the sphere.
The other operator in~\eqref{operatorME} is the surface curl, i.\,e., a scalar curl of a tangential field, defined as the coefficient of the projection of the curl in $\R^3$ onto the
normal component of the sphere
$$
\operatorname{curl}_{\IS^2} E= \hat{n}\cdot(\nabla\times \tilde E),
$$
where $\tilde E$ is a smooth extension of the field $E$ in $\R^3$. 
With this at hand, the Hamiltonian operator $\mathcal{A}$ is skew symmetric:
$\mathcal{A}^*=-\mathcal{A}$, see \cite{monk2003finiteElement} for more detail.

\begin{remark}\label{rmk:surf_op}
	In order to ease the computations below, we present some properties of the operators in the PDE~\eqref{detmax}.
	
	Let us consider a scalar function $f \in C^\infty(\mathbb S^2)$, its identification as a radial vector field $f\hat{r}$, and the operator $curl_{\IS^2}$ acting on it.
	To compute the semigroup associated to the Hamiltonian operator \eqref{operatorME}, we need to consider
	\[
	\operatorname{curl}_{\mathbb S^2} \operatorname{curl}_{\mathbb S^2} f
	=\operatorname{curl}_{\mathbb S^2} \left(\nabla\times (f\hat{r})|_{T\IS^2}\right),
	\]
	where the radial coordinate $\hat r$ coincides with the outward unit normal $\hat n$.
    Pointwise, we then obtain the relation
	\[
	\operatorname{curl}_{\mathbb S^2} \left(\nabla\times (f\hat{r})|_{T\IS^2}\right)= \hat r\cdot\left[\nabla\times \nabla\times (f\hat r)\right],
	\]
    which follows by the fact that $\nabla \times (f \hat{r}) = -\hat{r}\times \nabla f$ is in $T {\IS^{2}}$.
	Using the standard $\R^3$ vector calculus, we then obtain the relation
	\[
	\hat r\cdot\left[\nabla\times \nabla\times (f\hat r)\right]=\hat r\cdot\left[\nabla(\nabla\cdot f\hat r)-\Delta(f\hat r)\right]=-\Deltalb f.
	\]

	Let us now consider a tangential vector field $X$. Using results from \cite{tangVecField} and references therein, we obtain that
	\[
	\operatorname{curl}_{\mathbb S^2} \operatorname{curl}_{\mathbb S^2} X
	=
	\nabla_{\IS^{2}}\operatorname{div}_{\IS^2} X-\Delta_{\IS^2}^{H} X,
	\]
	using the fact that $\hat n\times \nabla_{\IS^{2}}(\nabla_{\IS^2}\times X)=J\nabla_{\IS^{2}}(\nabla_{\IS^2}\times X)$, where $J$ is a rotation by $90$ degrees, and recalling that $\Delta_{\IS^2}^{H} X$ is the vector Hodge Laplacian. 
\end{remark}

Applying Stone's Theorem from \cite{Stone}, we obtain that $\mathcal A$ generates a unitary $\mathcal{C}_0$ semigroup
\begin{align}\label{ME_semi}
	e^{t\mathcal{A}}=\begin{pmatrix}
		\cos(t(\Delta^{H}_{\IS^2})^{\frac12}) & 
		(\Delta^{H}_{\IS^2})^{-\frac12}\sin(t(\Delta^{H}_{\IS^2})^{\frac12})\operatorname{curl}_{\IS^2}\\
		-(\Delta_{\IS^2})^{-\frac12}\sin(t(\Delta_{\IS^2})^{\frac12})\operatorname{curl}_{\IS^2} & 
		\cos(t(\Delta_{\IS^2})^{\frac12})
	\end{pmatrix},
\end{align}
where $\Delta^H_{\IS^2}$ denotes the vector Hodge Laplacian.
This result follows formally applying the definition of the operator exponential and separating the partial sums of even and odd powers of $\mathcal{A}$ using the fact that
$$
\mathcal{A}^{2k}=(-1)^k
\begin{pmatrix} 
	\operatorname{curl}_{\IS^2}^{2k} & 0\\ 
	0 & \operatorname{curl}_{\IS^2}^{2k}
\end{pmatrix},\quad 
\mathcal{A}^{2k+1}=(-1)^k
\begin{pmatrix} 0 & \operatorname{curl}_{\IS^2}^{2k+1}\\ -\operatorname{curl}_{\IS^2}^{2k+1} & 0
\end{pmatrix}.
$$ 

The above setting then allows us to define the solution of the deterministic Maxwell's equations on the sphere~\eqref{detmax} as
\begin{align}
	\label{eqn:ME}
	\begin{split}
	E(t)&=\cos(t(\Deltalb^H)^{1/2})E(0)+(\Deltalb^H)^{-1/2}\sin(t(\Deltalb^H)^{1/2})\operatorname{curl}_{\IS^2} H(0)\\
	H(t)&=-(\Deltalb)^{1/2}\sin(t(\Deltalb)^{1/2})\operatorname{curl}_{\IS^2} E(0)+\cos(t(\Deltalb)^{1/2})H(0).
	\end{split}
\end{align}
As a consequence, the Hamiltonian (or the total energy of the system)
\begin{equation}\label{sme-energy}
	\mathcal{E}(t)=\frac12\left(\norm{E(t)}^2+\norm{H(t)}^2\right),
\end{equation}
with the proper $L^2$--norm on the product space, is a conserved quantity for the PDE~\eqref{detmax}.

We are now ready to introduce the driving noise, $L(t,x)=(L_E(t,x),L_H(t,x)) \in L^2(\Omega, \mathcal{H}_0)$, as presented in Section~\ref{sect-setting},
into Maxwell's equations on the sphere~\eqref{detmax}. We thus define the stochastic Maxwell's equations on the sphere~\eqref{maxeq} as
the SPDE system
\begin{align}\label{eq:sme_surf}
	\begin{split}
		\dd E(t,x)&=\operatorname{curl}_{\IS^2}H(t,x)\dd t+\dd L_E(t,x),\\
		\dd H(t,x)&= -\operatorname{curl}_{\IS^2} E(t,x) \dd t+\dd L_H(t,x).
	\end{split}
\end{align}
Applying the semigroup~\eqref{ME_semi}, we obtain the mild solution
\begin{align}
	\label{eqn:ME_mildsol}
	&\begin{split}
		E(t)&=\cos(t(\Deltalb^H)^{1/2})E(0)+(\Deltalb^H)^{-1/2}\sin(t(\Deltalb^H)^{1/2})\operatorname{curl}_{\IS^2} H(0)\\
		&+\int_0^t\cos((t-s)(\Deltalb^H)^{1/2})\dd L_E(s)+\int_0^t(\Deltalb^H)^{-1/2}\sin((t-s)(\Deltalb^H)^{1/2})\operatorname{curl}_{\IS^2}\dd L_H(s),\\
		H(t)&=-(\Deltalb)^{1/2}\sin(t(\Deltalb)^{1/2})\operatorname{curl}_{\IS^2} E(0)+\cos(t(\Deltalb)^{1/2})H(0)\\
		&-\int_0^t(\Deltalb)^{1/2}\sin((t-s)(\Deltalb)^{1/2})\operatorname{curl}_{\IS^2}\dd L_E(s)+\int_0^t\cos((t-s)(\Deltalb)^{1/2})\dd L_H(s).
	\end{split}
\end{align}
The above formulation of stochastic Maxwell's equations on the sphere is used for theoretical purposes. For numerical implementation,
we spectrally decompose the SPDE~\eqref{eq:sme_surf} to obtain a system for the coefficients in the spectral expansions of $E$ and $H$. We now present some details on how to obtain this system.

Using the orthonormal basis~\eqref{3d-onb}, we can decompose the phase space \eqref{spaceME} into a direct sum of finite dimensional subspaces spanned by the vector spherical harmonics
$$
L^2(\IS^2,T\IS^2)=\overline{\bigoplus_{\ell \in \N_0,m=-\ell,\dots,\ell}\operatorname{span}\{\mathbf{\Phi_{\ell,m}},\mathbf{\Psi_{\ell, m}}\}},\ L^2(\IS^2)=\overline{\bigoplus_{\ell \in \N_0,m=-\ell,\dots,\ell}\operatorname{span}\{Y_{\ell,m}\}}.
$$
Using Remark~\ref{rmk:surf_op} and direct computations on the orthonormal basis, we can show that
\begin{align*}
	&\operatorname{div}_{\IS^2}\mathbf{\Phi}_{\ell,m}=\Deltalb Y_{\ell,m}=-\sqrt{\lambda_\ell}Y_{\ell,m},\quad \operatorname{div}_{\IS^2}\mathbf{\Psi}_{\ell,m}=0,\quad \operatorname{curl}_{\IS^2}\mathbf{\Phi}_{\ell,m}=0,\\
	&\operatorname{curl}_{\IS^2}\mathbf{\Psi}_{\ell,m}=-\operatorname{curl}_{\IS^2}\operatorname{curl}_{\IS^2}{Y}_{\ell,m}=-\sqrt{\lambda_\ell}Y_{\ell,m},\,\\
	&\operatorname{curl}_{\IS^2}Y_{\ell,m}=\nabla\times \hat r Y_{\ell,m}|_{T\IS^2}=-\sqrt{\lambda_\ell}\mathbf{\Psi}_{\ell,m}.
\end{align*}
In the TE mode, see above and \cite{MR3635830}, the magnetic field is in the normal direction and therefore interpreted as a scalar, while the electric field is on the tangent plane. Hence, we can assume the following expansion for the solution to the SPDE~\eqref{eq:sme_surf}:
\begin{align*}
	H(t,x)&=\sum_{\ell=0}^\infty\sum_{m=-\ell}^\ell h_{\ell,m}(t) Y_{\ell,m}(x)\\
	E(t,x)&=\sum_{\ell=0}^\infty\sum_{m=-\ell}^\ell \left(e^{(1)}_{\ell,m}(t) \mathbf{\Phi}_{\ell,m}(x)+ e^{(2)}_{\ell,m}(t) \mathbf{\Psi}_{\ell,m}(x)\right).
\end{align*}
Analogously, we can identify the noise with the formal trace class expansions
in the basis of the two spaces defining $\mathcal{H}_0$ as follows:
\begin{align*}
	L_H(t,x)&=\sum_{\ell=0}^\infty\sum_{m=-\ell}^\ell L_{\ell,m}(t) Y_{\ell,m}(x)\\
	L_E(t,x)&=\sum_{\ell=0}^\infty\sum_{m=-\ell}^\ell L^{(1)}_{\ell,m}(t) \mathbf{\Phi}_{\ell,m}(x)+ L^{(2)}_{\ell,m}(t) \mathbf{\Psi}_{\ell,m}(x).
\end{align*}
Using the orthogonality properties of the vector spherical harmonics and techniques from Remark~\ref{rmk:surf_op},
the SPDE~\eqref{eq:sme_surf} can be rewritten as the system of one-dimensional SDEs
\begin{align}\label{eq:max-fin}
	\begin{cases}
	\dd e^{(1)}_{\ell,m}&=\dd L^{(1)}_{\ell,m}(t),\\
	\dd e^{(2)}_{\ell,m}&=-\sqrt{\lambda_\ell} h_{\ell,m}\dd t+\dd L^{(2)}_{\ell,m}(t)\\
	\dd h_{\ell,m}&=\sqrt{\lambda_\ell}e^{(2)}_{\ell,m}\dd t+\dd L_{\ell,m}(t).
	\end{cases}
\end{align}
Finally, if one wants to preserve the physical properties of Maxwell's equations in the vacuum,
we obtain from $\operatorname{div}_{\IS^2}E=0$ that
$L^{(1)}_{\ell,m}=0$ for all $\ell \in \N_0, m=-\ell, \dots, \ell$ 
and from the conservation of flux of $H$, $\int_{\IS^{2}}H(t,x)\dd x=\text{Const}$, that 
$L_{0,0}=0$.
The final system of SDEs used for numerical implementation then reads
\begin{align}\label{eq:max_modes}
	\begin{cases}
		\dd e^{(2)}_{\ell,m}&=-\sqrt{\lambda_\ell}h_{\ell,m}\dd t+\dd L^{(2)}_{\ell,m}(t)\\
		\dd h_{\ell,m}&=\sqrt{\lambda_\ell}e^{(2)}_{\ell,m}\dd t+\dd L_{\ell,m}(t).
	\end{cases}
\end{align}
The mild solution to this system is then seen to be given by
\[
\begin{pmatrix}
	e^{(2)}_{\ell m}(t) \\
	h_{\ell m}(t)
\end{pmatrix}
=
e^{t\mathcal A_\ell}
\begin{pmatrix}
	e^{(2)}_{\ell m}(0) \\
	h_{\ell m}(0)
\end{pmatrix}
+
\int_0^t
e^{(t-s)\mathcal A_\ell}
\begin{pmatrix}
	\dd L^{(2)}_{\ell m} \\
	\dd L_{\ell m}
\end{pmatrix},
\]
where
\[
\mathcal A_\ell
=
\begin{pmatrix}
	0 & -\sqrt{\lambda_\ell} \\
	\sqrt{\lambda_\ell} & 0
\end{pmatrix},
\quad
e^{t\mathcal A_\ell}
=
\begin{pmatrix}
	\cos(\sqrt{\lambda_\ell} t) & -\sin(\sqrt{\lambda_\ell}t) \\
	\sin(\sqrt{\lambda_\ell} t) & \cos(\sqrt{\lambda_\ell} t)
\end{pmatrix}.
\]
Observe that the above has a similar structure as the stochastic wave equation.

We conclude this subsection by stating a trace formula for the expected energy of the exact mild solution~\eqref{eqn:ME_mildsol}
of the stochastic Maxwell's equations on the sphere~\eqref{eqn:ME}.
\begin{proposition}\label{prop:energy-exact_max}
	Consider the stochastic Maxwell's equations on the sphere~\eqref{eq:sme_surf} with initial values $(E(0),H(0))\in \mathcal{H}_0$.
	Assume that the expected value of the initial energy $\E\left[\mathcal{E}(0)\right]<\infty$ and that the $Q$-L\'evy process $L$
	is trace-class with covariance operators of $L_E$ and $L_H$ denoted by $Q_E$ and $Q_H$, respectively.
	The solution to this SPDE satisfies the trace formula for the energy:
	\begin{align*}
		\E\left[\mathcal{E}(t)\right]=\E\left[\mathcal{E}(0)\right]+\frac{t}2\left(\tr(Q_E)+\tr(Q_H)\right),
	\end{align*}
	for every time $t\geq0$.
\end{proposition}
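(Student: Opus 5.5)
The plan is to mimic the proof of Proposition~\ref{prop:energy-exact}, using that the Maxwell semigroup $e^{t\mathcal A}$ in \eqref{ME_semi} is unitary on $\mathcal H_0$ (Stone's theorem, since $\mathcal A^*=-\mathcal A$). Write $X(t)=(E(t),H(t))$. Then $2\mathcal E(t)=\norm{X(t)}_{\mathcal H_0}^2$, and the mild solution \eqref{eqn:ME_mildsol} reads
\[
X(t)=e^{t\mathcal A}X(0)+\int_0^t e^{(t-s)\mathcal A}\dd L(s),\qquad L=(L_E,L_H).
\]
Expanding the square gives three contributions. The first is $\norm{e^{t\mathcal A}X(0)}^2=\norm{X(0)}^2=2\mathcal E(0)$ by unitarity. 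The second is the cross term $2\langle e^{t\mathcal A}X(0),\int_0^t e^{(t-s)\mathcal A}\dd L(s)\rangle$. Its expectation vanishes because $L$ has mean zero (as assumed throughout), $X(0)$ is $\mathcal F_0$-measurable, and the stochastic integral is a martingale started at zero.

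The third contribution is the noise term. By It\^o's isometry (\cite{peszat2007}),
\[
\E\Big[\Big\|\int_0^t e^{(t-s)\mathcal A}\dd L(s)\Big\|^2\Big]=\int_0^t\norm{e^{(t-s)\mathcal A}Q_L^{1/2}}_{HS}^2\dd s .
\]
Here $Q_L$ is the covariance of $L$ on $\mathcal H_0$. Unitarity gives $\norm{UQ_L^{1/2}}_{HS}=\norm{Q_L^{1/2}}_{HS}$, so the integral equals $t\,\tr(Q_L)$.

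It remains to identify $\tr(Q_L)=\tr(Q_E)+\tr(Q_H)$. Compute the trace in the orthonormal basis of $\mathcal H_0$ formed by $\{\mathbf\Phi_{\ell,m},\mathbf\Psi_{\ell,m}\}$ for the $E$-component and $\{Y_{\ell,m}\}$ for the $H$-component. Only the diagonal blocks enter the trace, so the cross-covariance between $L_E$ and $L_H$ is irrelevant. This is the step needing the most care. I would state explicitly that off-diagonal blocks of $Q_L$ do not contribute to $\sum\langle Q_Le_k,e_k\rangle$, so no independence of $L_E$ and $L_H$ is required.

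As a sanity check, one can redo the computation mode by mode via \eqref{eq:max-fin}. Each pair $(e^{(2)}_{\ell,m},h_{\ell,m})$ is propagated by the rotation $e^{t\mathcal A_\ell}$, and $e^{(1)}_{\ell,m}$ by the identity. This mirrors the $\cos^2+\sin^2=1$ argument in Proposition~\ref{prop:energy-exact}, and summing over modes gives the same trace. Dividing by two yields $\E[\mathcal E(t)]=\E[\mathcal E(0)]+\frac t2(\tr Q_E+\tr Q_H)$.
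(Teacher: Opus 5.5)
Your proof is correct. It shares the paper's overall skeleton: expand the square, remove the initial-data/noise cross term using mean zero, apply It\^o's isometry, then use norm preservation. The difference is in how norm preservation is obtained.

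The paper works with the explicit component formulas \eqref{eqn:ME_mildsol}. It expands $\norm{E(t)}^2$ and $\norm{H(t)}^2$ separately and identifies $\operatorname{curl}_{\IS^2}$ with $(\Deltalb^H)^{1/2}$ and $\Deltalb^{1/2}$ through a polar decomposition. It then collapses the terms with $\cos^2+\sin^2=I$ and applies It\^o's isometry to each stochastic integral.

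You use only that $e^{t\mathcal A}$ is unitary on $\mathcal H_0$, which follows from skew-adjointness and Stone's theorem. You apply It\^o's isometry once on the product space and then split $\tr Q_L$ into its diagonal blocks.

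What your route buys:
\begin{itemize}
\item It never needs the explicit block form of the semigroup or the polar-decomposition identification of the curl.
\item It handles correlated $L_E$ and $L_H$ automatically.
\end{itemize}
The paper's expansion does not write out the mixed terms such as
$\bigl\langle \int_0^t\cos((t-s)(\Deltalb^H)^{1/2})\,\dd L_E(s),\ \int_0^t(\Deltalb^H)^{-1/2}\sin((t-s)(\Deltalb^H)^{1/2})\operatorname{curl}_{\IS^2}\,\dd L_H(s)\bigr\rangle$.
Such terms vanish in expectation only if $L_E$ and $L_H$ are uncorrelated, or because they cancel against the corresponding mixed term from $H(t)$. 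That cancellation is exactly the off-diagonal part of $(e^{t\mathcal A})^*e^{t\mathcal A}=I$, which your argument uses implicitly. Your remark that no independence of $L_E$ and $L_H$ is needed is therefore a genuine clarification.

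The paper's concrete route has its own advantage: it mirrors the wave-equation proof term by term. That makes the discrete analogue for the exponential integrator \eqref{eqn:SME_trigo} immediate.
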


\begin{appendixproof}[Proof of Proposition~\ref{prop:energy-exact_max}]
	Let us recall that we are assuming that $\mathbb E\left[L_H(t)\right]=0,\ \mathbb E\left[L_E(t)\right]=0$.
	We use the expression~\eqref{eqn:ME_mildsol} for the mild solution to  the considered SPDE and compute the expectation of the energy as follows
	\begin{align*}
		2&\E\left[\cE(t)\right] = \mathbb{E}\left[\norm{E(t)}^2+\norm{H(t)}^2\right]\\
		&= \E\left[ \norm{\cos(t(\Deltalb^H)^{1/2})E(0)}^2 + \norm{(\Deltalb)^{-1/2}\sin(t(\Deltalb)^{1/2})\operatorname{curl}_{\IS^2}H(0)}^2\right.\\
		& \left.\quad -2 \langle (\Deltalb)^{-1/2}\sin(t(\Deltalb)^{1/2})\operatorname{curl}_{\IS^2}H(0), \cos(t(\Deltalb^H)^{1/2})E(0)\rangle\right.\\
		& \left.\quad + \norm{ \int_0^t\cos((t-s)(\Deltalb^H)^{1/2})\dd L_E(s)}^2\right.\left.+ \norm{ \int_0^t(\Deltalb)^{-1/2}\sin((t-s)(\Deltalb)^{1/2})\operatorname{curl}_{\IS^2}\dd L_H(s)}^2\right.\\
		& \left.\quad + \norm{ (\Deltalb^H)^{-1/2}\sin(t(\Deltalb^H)^{1/2})\operatorname{curl}_{\IS^2}E(0)}^2 + \norm{\cos(t(\Deltalb)^{1/2})H(0)}^2  \right.\\
		&\left. \quad + 2 \langle (\Deltalb^H)^{-1/2} \sin(t(\Deltalb^H)^{1/2})\operatorname{curl}_{\IS^2}E(0), \cos(t(\Deltalb)^{1/2})H(0)\rangle  \right.\\
		&\left. \quad + \norm{\int_0^t(\Deltalb^H)^{-1/2}\sin((t-s)(\Deltalb^H)^{1/2})\operatorname{curl}_{\IS^2}\dd L_E(s)}^2
		+  \norm{\int_0^t\cos((t-s)(\Deltalb)^{1/2})\dd L_H(s)}^2\right].
\end{align*}
We notice that all the expressions containing $E(0)$ and $H(0)$ can be added up, giving the expectation of the initial energy $\mathbb E\left[\mathcal{E}(0)\right]$.
Using the polar decomposition theorem on Hilbert spaces \cite[Theorem~7.5, Theorem~7.20]{weidmann1980linear} and applying the definition of $\operatorname{curl}_{\IS^2}$ on the orthonormal basis of $\mathcal{H}_0$, we get
$$\operatorname{curl}_{\IS^2}E=\sqrt{-\operatorname{curl}_{\IS^2}\operatorname{curl}_{\IS^2}}E=(\Deltalb^H)^{1/2} E,\quad \operatorname{curl}_{\IS^2}H=\Deltalb^{1/2} H,$$ 
and the fact that the 
cosine and sine operators satisfy the trigonometric identities
$$\cos^2(t(\Deltalb^H)^{1/2})+\sin^2(t(\Deltalb^H)^{1/2})=I,\quad \cos^2(t(\Deltalb)^{1/2})+\sin^2(t(\Deltalb)^{1/2})=I.$$
We thus obtain the relation
\begin{align*}
		2&\E\left[\cE(t)\right] = 2\E\left[\cE(0)\right] + \E\left[\norm{ \int_0^t\cos((t-s)(\Deltalb^H)^{1/2})\dd L_E(s)}^2\right. \\
		&\quad+ \left.\norm{ \int_0^t(\Deltalb)^{-1/2}\sin((t-s)(\Deltalb)^{1/2})\operatorname{curl}_{\IS^2}\dd L_H(s)}^2\right]\\
		&\quad+ \E\left[ \norm{\int_0^t(\Deltalb^H)^{-1/2}\sin((t-s)(\Deltalb^H)^{1/2})\operatorname{curl}_{\IS^2}\dd L_E(s)}^2\right.\\
		&\quad+ \left.\norm{ \int_0^t\cos((t-s)(\Deltalb)^{1/2})\dd L_H(s)}^2\right].
	\end{align*}
	The result then follows using an It\^o isometry and properties of the Hilbert--Schmidt norms.
\end{appendixproof}

\subsection{Spectral discretization}

A spectral discretization of the stochastic Maxwell's equations on the sphere~\eqref{eq:sme_surf} is obtained by applying the projection operator $\cP_\kappa$ defined in Section~\ref{sect-setting}. For a truncation index $\kappa$, we then get the following mild solution
\begin{align}
	\label{eqn:ME_mildsol-truncs}
	\begin{split}
		E&^\kappa(t)=\cos(t(\Deltalb^{H,\kappa})^{1/2})E^\kappa(0)+(\Deltalb^\kappa)^{-1/2}\sin(t(\Deltalb^\kappa)^{1/2})\operatorname{curl}^\kappa_{\IS^2} H^\kappa(0)\\
		&+\int_0^t\cos((t-s)(\Deltalb^{H,\kappa})^{1/2})\cP_\kappa\dd L_E(s)+\int_0^t(\Deltalb^\kappa)^{-1/2}\sin((t-s)(\Deltalb^\kappa)^{1/2})\operatorname{curl}^\kappa_{\IS^2} \cP_\kappa\dd L_H(s),\\
		H&^\kappa(t)=-(\Deltalb^{H,\kappa})^{1/2}\sin(t(\Deltalb^{H,\kappa})^{1/2})\operatorname{curl}^\kappa_{\IS^2} E^\kappa(0)+\cos(t(\Deltalb^\kappa)^{1/2})H^\kappa(0)\\
		&-\int_0^t(\Deltalb^{H,\kappa})^{1/2}\sin((t-s)(\Deltalb^{H,\kappa})^{1/2})\operatorname{curl}^\kappa_{\IS^2} \cP_\kappa\dd L_E(s)+\int_0^t\cos((t-s)(\Deltalb^\kappa)^{1/2})\cP_\kappa\dd L_H(s),
	\end{split}
\end{align}
where $\Deltalb^{H, \kappa} = \cP_\kappa \Deltalb^H$ and $\operatorname{curl}^\kappa_{\IS^{2}} = \cP_\kappa \operatorname{curl}_{\IS^{2}}$.
It is not a surprise that the above spectral discretization has the same long-time behavior as the exact solution to the considered stochastic Maxwell's equations given in Proposition~\ref{prop:energy-exact_max}. This is the subject of the next result.
\begin{proposition}\label{prop:energy-spectral_max}
	Consider the stochastic Maxwell's equations on the sphere~\eqref{eq:sme_surf} with initial values $(E(0),H(0))\in \mathcal H_0$.
	Assume that the expected value of the initial energy $\E\left[\mathcal{E}(0)\right]<\infty$ and that the $Q$-L\'evy process $L$
	is trace-class with covariance operators denoted by $Q_E$ and $Q_H$. Let ($E^\kappa$,$H^\kappa$) denote the spectral approximation obtained from equation~\eqref{eqn:ME_mildsol-truncs}. The spectral approximation satisfies the trace formula for the energy:
	\begin{align*}
		\E\left[\mathcal{E}^\kappa(t)\right]=\E\left[\mathcal{E}^\kappa(0)\right]+\frac{t}2\left(\tr\left(\cP_\kappa Q_E\cP_\kappa\right)+\tr\left(\cP_\kappa Q_H\cP_\kappa\right)\right),
	\end{align*}
	for every time $t\geq0$.
\end{proposition}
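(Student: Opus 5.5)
The plan is to follow the proof of Proposition~\ref{prop:energy-exact_max}, using the spectral-truncation modifications from Proposition~\ref{prop:energy-spectral}. Rather than manipulating the operator-valued formula~\eqref{eqn:ME_mildsol-truncs} directly, I will work mode by mode. Because the vector spherical harmonics~\eqref{3d-onb} form an orthonormal basis of $\mathcal H_0$, Parseval's identity gives
\[
2\mathcal E^\kappa(t)=\sum_{\ell=0}^{\kappa}\sum_{m=-\ell}^{\ell}\Bigl(|e^{(1)}_{\ell,m}(t)|^2+|e^{(2)}_{\ell,m}(t)|^2+|h_{\ell,m}(t)|^2\Bigr).
\]
Here the coefficients solve the truncated version of the system~\eqref{eq:max-fin}. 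Each pair $(e^{(2)}_{\ell,m},h_{\ell,m})$ therefore has the mild form
\[
e^{t\mathcal A_\ell}\bigl(e^{(2)}_{\ell,m}(0),h_{\ell,m}(0)\bigr)^\top+\int_0^t e^{(t-s)\mathcal A_\ell}\bigl(\dd L^{(2)}_{\ell,m},\dd L_{\ell,m}\bigr)^\top,
\]
and the remaining coefficient is simply $e^{(1)}_{\ell,m}(t)=e^{(1)}_{\ell,m}(0)+L^{(1)}_{\ell,m}(t)$.

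The first key step is to expand the squares and take expectations. The cross terms between the deterministic (initial-data) part and the stochastic integral vanish, since the Lévy noise has mean zero, the initial data are $\mathcal F_0$-measurable, and the stochastic integral is a martingale. Since $e^{t\mathcal A_\ell}$ is a rotation matrix, the initial-data contribution is exactly $|e^{(2)}_{\ell,m}(0)|^2+|h_{\ell,m}(0)|^2$. Summed over the modes, this reproduces $2\E[\mathcal E^\kappa(0)]$.

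For the stochastic part, I will apply Itô's isometry to the vector integral. Its expected squared norm equals
\[
\int_0^t \tr\bigl(e^{(t-s)\mathcal A_\ell}\,C_{\ell,m}\,e^{(t-s)\mathcal A_\ell^\top}\bigr)\dd s = t\,\tr(C_{\ell,m}),
\]
where $C_{\ell,m}$ is the $2\times2$ covariance of $(L^{(2)}_{\ell,m}(1),L_{\ell,m}(1))$. The equality holds because conjugation by an orthogonal matrix preserves the trace. This is the modewise counterpart of $\cos^2+\sin^2=I$ used earlier. For the $e^{(1)}$ component the contribution is simply $t\,\E|L^{(1)}_{\ell,m}(1)|^2$.

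Summing over $\ell\le\kappa$ and $|m|\le\ell$, the stochastic contributions add up to $t\sum(\E|L^{(1)}_{\ell,m}(1)|^2+\E|L^{(2)}_{\ell,m}(1)|^2)+t\sum\E|L_{\ell,m}(1)|^2$. By the computation of $t\tr Q$ in Section~\ref{sect-setting}, applied with the basis $\{\mathbf\Phi_{\ell,m},\mathbf\Psi_{\ell,m}\}$ for $L_E$ and $\{Y_{\ell,m}\}$ for $L_H$, this equals $t(\tr(\cP_\kappa Q_E\cP_\kappa)+\tr(\cP_\kappa Q_H\cP_\kappa))$. Dividing by two gives the claimed formula.

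The main obstacle is not the algebra but the bookkeeping needed to show that the operator formula~\eqref{eqn:ME_mildsol-truncs} really coincides with this modewise rotation. Two issues arise: the $\ell=0$ mode, where $\mathbf\Phi_{0,0}$ and $\mathbf\Psi_{0,0}$ are undefined and only $h_{0,0}$, driven by $L_{0,0}$, is present; and the interpretation of $\cP_\kappa$ on the tangential fields. I would handle both by defining $\cP_\kappa$ on $L^2(\IS^2,T\IS^2)$ as truncation in the $\mathbf\Phi$ and $\mathbf\Psi$ expansion. Under this definition, $\operatorname{curl}^\kappa_{\IS^2}$ maps $\mathbf\Psi_{\ell,m}$ to $-\sqrt{\lambda_\ell}Y_{\ell,m}$ and $Y_{\ell,m}$ to $-\sqrt{\lambda_\ell}\mathbf\Psi_{\ell,m}$. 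The projected generator is then block diagonal, with blocks $\mathcal A_\ell$ and zero, which justifies the mode decomposition above.
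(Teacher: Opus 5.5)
Your proof is correct, but it is organized differently from the paper's. The paper defers to the operator-level computation of Proposition~\ref{prop:energy-exact_max}: expand $\norm{E^\kappa(t)}^2+\norm{H^\kappa(t)}^2$ using the functional-calculus formula~\eqref{eqn:ME_mildsol-truncs}, recombine the initial-data terms via $\operatorname{curl}_{\IS^2}=(\Deltalb^H)^{1/2}$ and $\cos^2+\sin^2=I$, then apply It\^o's isometry with $\cP_\kappa$ inserted as in Proposition~\ref{prop:energy-spectral}. You instead pass to the coefficient system~\eqref{eq:max-fin}, and you replace $\cos^2+\sin^2=I$ by the invariance of $\tr C_{\ell,m}$ under conjugation with the rotation $e^{t\mathcal A_\ell}$. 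The mechanism is the same (an orthogonal semigroup plus It\^o's isometry), but your version is more robust in two ways.

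\begin{itemize}
\item The identity $\operatorname{curl}_{\IS^2}E=(\Deltalb^H)^{1/2}E$, and the factor $\cos(t(\Deltalb^H)^{1/2})$ acting on $E$, hold only on the divergence-free part. Indeed $\operatorname{curl}_{\IS^2}\mathbf\Phi_{\ell,m}=0$ while $\Deltalb^H\mathbf\Phi_{\ell,m}\neq0$. Your formula $e^{(1)}_{\ell,m}(t)=e^{(1)}_{\ell,m}(0)+L^{(1)}_{\ell,m}(t)$ gives the correct dynamics on these gradient modes and still produces the $\mathbf\Phi$-part of $\tr(\cP_\kappa Q_E\cP_\kappa)$.
\item The paper's expansion omits the cross terms between the $L_E$- and $L_H$-integrals. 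These vanish for uncorrelated noises and cancel in general, but your trace identity handles an arbitrary cross-covariance automatically.
\end{itemize}

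What your route costs is the explicit curl relations on $\mathbf\Psi_{\ell,m}$ and $Y_{\ell,m}$, plus a definition of $\cP_\kappa$ on tangent fields through the $\mathbf\Phi,\mathbf\Psi$ expansion. You are right to flag the latter, since a componentwise Cartesian truncation does not preserve tangency.

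One adjustment: do not try to show that~\eqref{eqn:ME_mildsol-truncs} \emph{as written} coincides with your rotation, because it does not. On $\mathbf\Phi$-modes it would multiply $e^{(1)}_{\ell,m}$ by $\cos(t\sqrt{\lambda_\ell})$. Its $H^\kappa$ line also needs $(\Deltalb^\kappa)^{-1/2}$ where $(\Deltalb^{H,\kappa})^{1/2}$ appears. Instead, define the spectral approximation as the mild solution of~\eqref{spectralprob} with your block-diagonal generator. That is what your last sentences effectively do, and it matches the modal system the paper itself uses for implementation.
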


\begin{appendixproof}[Proof of Proposition~\ref{prop:energy-spectral_max}]
	The proof of this result follows analogously to the proof of Proposition~\ref{prop:energy-exact_max}, using similar ideas as in the proof of Proposition~\ref{prop:energy-spectral} for the stochastic wave equation on the sphere thanks to Remark~\ref{rmk:dblw}.
\end{appendixproof}

\subsection{Temporal discretizations}

We can now investigate the long-time behavior, with respect to the energy, of the three time integrators as it was done
for the stochastic wave and Schr\"odinger equations on the sphere in the previous sections.

Let us first observe that deterministic Maxwell's equations on the sphere~\eqref{detmax} can be written as a system of wave equations
on the sphere.
\begin{remark}\label{rmk:dblw}
	Under the setting of Remark~\ref{rmk:surf_op} and the selection of the orientation, applying the time derivative to
	Maxwell's equations~\eqref{detmax}, we obtain the equivalent system of wave equations
	\begin{align*}
		\partial_{tt}E(t)+\nabla_{\IS^2}\operatorname{div}_{\IS^2} E-\Delta^{H}_{\IS^2} E(t)&=0\\
		\partial_{tt}H(t)-\Deltalb H(t)&=0.
	\end{align*}
	In addition, when considering the case of Maxwell's equations in vacuum, one has $\operatorname{div}_{\IS^2}E=0$, and the above
	system reduces to the system of wave equations on the sphere
	\begin{align*}
		\partial_{tt}E(t)-\Delta^{H}_{\IS^2} E(t)&=0\\
		\partial_{tt}H(t)-\Deltalb H(t)&=0.
	\end{align*}
\end{remark}
With this remark at hand, the proofs for the time discretizations of the stochastic Maxwell's equations on the sphere~\eqref{eq:sme_surf},
can be adapted from the the ones given for the stochastic wave equation on the sphere in Section~\ref{sect-swe}.

First we consider the forward Euler--Maruyama scheme~\eqref{eEM} which, when applied to the truncated stochastic Maxwell's equations on the sphere~\eqref{eqn:ME_mildsol-truncs}, reads
\begin{align}\label{eq:max_EM}
	\begin{split}
		E^\kappa_{n+1}&=E^\kappa_{n}+\tau\operatorname{curl}^\kappa_{\IS^2}H^\kappa_n+\cP_\kappa \Delta L^E_n\\
		H^\kappa_{n+1}&=H^\kappa_{n}-\tau\operatorname{curl}^\kappa_{\IS^2}E^\kappa_n+\cP_\kappa \Delta L^H_n.
	\end{split}
\end{align}
This time integrator does not have the correct behavior, with respect to the energy, as seen in the following proposition.
\begin{proposition}\label{prop:energy-em_max}
	Consider the stochastic Maxwell's equations on the sphere~\eqref{eq:sme_surf} with initial values $(E(0),H(0))\in \mathcal H_0.$
	Assume that the expected value of the initial energy $\E\left[\mathcal{E}(0)\right]<\infty$ and that the $Q$-L\'evy process $L$
	is trace-class with covariance operators of $L_E$ and $L_H$ denoted by $Q_E$ and $Q_H$, respectively.
	Furthermore, assume that the expected value of the initial energy satisfies $0<\E\left[\mathcal{E}^\kappa_0-\cE_0^{0,0}\right]$.
	Let $(E_{n}^\kappa, H_{n}^\kappa)$ denote the fully-discrete approximation given by the forward Euler--Maruyama scheme~\eqref{eq:max_EM}.
	The energy of this fully-discrete approximation grows exponentially in time:
	\begin{equation*}
		\E\left[\mathcal{E}^\kappa_n\right] \geq \exp\left(\frac12\tau t_n\right)\E\left[\mathcal{E}^\kappa_0 - \mathcal{E}_0^{0,0}\right]
	\end{equation*}
	for every discrete times $t_n=n\tau$ with integers $n\geq0$.
\end{proposition}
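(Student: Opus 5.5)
The plan is to mirror the proof of Proposition~\ref{prop:energy-em} by working mode by mode in the vector spherical harmonic basis. Expanding $E^\kappa_n=\sum_{\ell,m}\bigl(e^{(1),\ell,m}_n\bPhi_{\ell,m}+e^{(2),\ell,m}_n\mathbf{\Psi}_{\ell,m}\bigr)$ and $H^\kappa_n=\sum_{\ell,m}h^{\ell,m}_nY_{\ell,m}$, I will use the identities $\operatorname{curl}_{\IS^2}\bPhi_{\ell,m}=0$, $\operatorname{curl}_{\IS^2}\mathbf{\Psi}_{\ell,m}=-\sqrt{\lambda_\ell}Y_{\ell,m}$ and $\operatorname{curl}_{\IS^2}Y_{\ell,m}=-\sqrt{\lambda_\ell}\mathbf{\Psi}_{\ell,m}$ from the previous subsection. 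With these, the scheme~\eqref{eq:max_EM} decouples into the forward Euler--Maruyama discretization of the modal system~\eqref{eq:max-fin}:
\begin{align*}
e^{(1),\ell,m}_{n+1}&=e^{(1),\ell,m}_n+\Delta L^{(1),\ell,m}_n,\\
e^{(2),\ell,m}_{n+1}&=e^{(2),\ell,m}_n-\tau\sqrt{\lambda_\ell}\,h^{\ell,m}_n+\Delta L^{(2),\ell,m}_n,\\
h^{\ell,m}_{n+1}&=h^{\ell,m}_n+\tau\sqrt{\lambda_\ell}\,e^{(2),\ell,m}_n+\Delta L^{\ell,m}_n.
\end{align*}
I will match signs to whatever orientation convention yields \eqref{eq:max-fin}; only the skew structure matters. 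By orthonormality, the energy splits as $\cE^\kappa_n=\sum_{\ell,m}\cE^{\ell,m}_n$, where $\cE^{\ell,m}_n=\frac12\bigl((e^{(1),\ell,m}_n)^2+(e^{(2),\ell,m}_n)^2+(h^{\ell,m}_n)^2\bigr)$. For $\ell=0$ only $h^{0,0}$ and $e^{(1)}$ are present, so the $\ell=0$ mode is set aside exactly as in the statement.

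For $\ell\ge1$, I square the recursions and take expectations. The noise increments are mean zero and independent of $\mathcal F_{t_n}$, so all cross terms with $\Delta L_n$ vanish. The cross terms $\mp2\tau\sqrt{\lambda_\ell}\,e^{(2)}_nh_n$ cancel because of the skew-symmetry of $\mathcal A_\ell$. This gives
\begin{align*}
\E\left[\cE^{\ell,m}_{n+1}\right]&=\E\left[\cE^{\ell,m}_n\right]+\frac{\tau^2\lambda_\ell}{2}\E\left[(e^{(2),\ell,m}_n)^2+(h^{\ell,m}_n)^2\right]+\frac12\E\left[(\Delta L^{(1)})^2+(\Delta L^{(2)})^2+(\Delta L)^2\right].
\end{align*}

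This is the main obstacle. The $e^{(1)}$ component is not amplified, since it is a pure random walk, so the factor $1+\lambda_\ell\tau^2$ only multiplies the $(e^{(2)},h)$ part. I plan to handle this by separating the energy of each mode into $\cE^{\ell,m,(1)}_n=\frac12(e^{(1),\ell,m}_n)^2$, whose expectation is nondecreasing, and the rotating part $\cE^{\ell,m,\mathrm{rot}}_n$, which satisfies $\E[\cE^{\ell,m,\mathrm{rot}}_{n+1}]\ge(1+\lambda_\ell\tau^2)\E[\cE^{\ell,m,\mathrm{rot}}_n]\ge(1+\tau^2)\E[\cE^{\ell,m,\mathrm{rot}}_n]$ for $\lambda_\ell\ge2$. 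Alternatively, in the divergence-free TE setting \eqref{eq:max_modes}, where $L^{(1)}=0$ and $e^{(1)}\equiv0$ by the constraint on $E(0)$, the whole mode energy obeys the amplification directly, and this is how I intend to read the statement.

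With the recursion in hand, I iterate it to get $(1+\tau^2)^n$. Assuming $\tau<1$, as in Proposition~\ref{prop:energy-em}, I then use $(1+\tau^2)^n\ge\exp(\tfrac12\tau^2n)\ge\exp(\tfrac12\tau t_n)$. Summing over $\ell\ge1$ and $m$ and dropping the nonnegative $\ell=0$ contribution then yields the claimed bound $\E[\cE^\kappa_n]\ge\exp(\tfrac12\tau t_n)\E[\cE^\kappa_0-\cE^{0,0}_0]$.
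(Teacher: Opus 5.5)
Your proposal is correct and is essentially the paper's own argument: the paper transfers the proof of Proposition~\ref{prop:energy-em} via Remark~\ref{rmk:dblw}, and your modal computation is that transfer written out (skew block $\mathcal A_\ell$ giving the factor $1+\lambda_\ell\tau^2$ on $(e^{(2)},h)$, mean-zero noise cross terms vanishing, $\tau<1$, discarding $\ell=0$). One point to tighten is that the divergence-free reading ($e^{(1)}\equiv 0$, $L^{(1)}=0$) is necessary, not an optional alternative: splitting off the $e^{(1)}$ energy only yields $\E[\cE^{\ell,m,(1)}_0]+\exp(\frac12\tau t_n)\E[\cE^{\ell,m,\mathrm{rot}}_0]$, which is weaker than the stated bound, and the bound can actually fail if the initial energy sits in gradient modes; the paper likewise relies on $\operatorname{div}_{\IS^2}E=0$ through Remark~\ref{rmk:dblw}.
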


\begin{appendixproof}[Proof of Proposition~\ref{prop:energy-em_max}]
	The result follows as the proof of Proposition~\ref{prop:energy-em} thanks to Remark~\ref{rmk:dblw}.
\end{appendixproof}

The next step is to investigate the long-time behavior, with respect to the energy, of the backward Euler--Maruyama scheme~\eqref{bEM} when applied to the spatial truncation of the stochastic Maxwell's equations on the sphere~\eqref{eqn:ME_mildsol-truncs}. This numerical scheme is given by the following implicit relation
\begin{align}\label{eq:max_BEM}
	\begin{split}
		E^\kappa_{n+1}&=E^\kappa_{n}+\tau\operatorname{curl}^\kappa_{\IS^2}H^\kappa_{n+1}+\cP_\kappa \Delta L^E_n\\
		H^\kappa_{n+1}&=H^\kappa_{n}-\tau\operatorname{curl}^\kappa_{\IS^2}E^\kappa_{n+1}+\cP_\kappa \Delta L^H_n.
	\end{split}
\end{align}
The behavior of the backward Euler--Maruyama scheme is also not correct and this is made precise by the following result.
\begin{proposition}\label{prop:energy-bem_max}
	Consider the stochastic Maxwell's equations on the sphere~\eqref{eq:sme_surf} with initial values $(E(0),H(0))\in \mathcal H_0$.
	Assume that the expected value of the initial energy $\E\left[\mathcal{E}(0)\right]<\infty$ and that the $Q$-L\'evy process $L$
	is trace-class with covariance operators of $L_E$ and $L_H$ denoted by $Q_E$ and $Q_H$, respectively.
	Furthermore, assume that the expected value of the initial energy satisfies $0<\E\left[\mathcal{E}^\kappa_0\right]$.
	Let $(E_{n}^\kappa, H_{n}^\kappa)$ denote the fully-discrete approximation given by the backward Euler--Maruyama scheme~\eqref{eq:max_BEM}.
	The energy of this fully-discrete approximation grows at a slower rate than in the exact solution to SPDE~\eqref{eq:sme_surf}:
	\begin{equation*}
		\E\left[\cE^\kappa_n\right] <
		\E\left[\cE^\kappa_0\right]+ \frac{1}{2\tau}\left(\tr\left(\cP_\kappa Q_H\cP_\kappa\right)+\tr\left(\cP_\kappa Q_E\cP_\kappa\right)\right) + \frac {t_n} {2} \left(v^H_{0,0}+v^E_{0,0}\right)
	\end{equation*}
	for every discrete times $t_n=n\tau$ with integers $n\geq0$. Here, we denote $v^E_{0,0}=\Var(L^E_{0,0}(1))$ and $v^H_{0,0}=\Var\left(L^H_{0,0}(1)\right)$. Moreover, we have the relation
	\begin{equation}\label{eqn:SME_energy_BEM_limit}
		\lim_{n\to\infty}\frac{\E\left[\mathcal{E}^\kappa_n\right]}{t_n}=\frac 1 2 \left(v^E_{0,0}+v^H_{0,0}\right).
	\end{equation}
\end{proposition}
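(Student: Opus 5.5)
The plan is to reduce to the mode-wise system~\eqref{eq:max_modes} and copy the proof of Proposition~\ref{prop:energy-bem}, as Remark~\ref{rmk:dblw} suggests. Expand $E^\kappa_n$ in the vector spherical harmonics and $H^\kappa_n$ in the $Y_{\ell,m}$. Using the curl identities $\operatorname{curl}_{\IS^2}\mathbf{\Psi}_{\ell,m}=-\sqrt{\lambda_\ell}Y_{\ell,m}$, $\operatorname{curl}_{\IS^2}Y_{\ell,m}=-\sqrt{\lambda_\ell}\mathbf{\Psi}_{\ell,m}$ and $\operatorname{curl}_{\IS^2}\mathbf{\Phi}_{\ell,m}=0$, the scheme~\eqref{eq:max_BEM} decouples into independent two-dimensional implicit systems for $(e^{(2),\ell,m}_{n},h^{\ell,m}_n)$. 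The $\mathbf{\Phi}$-components, and the $\ell=0$ mode of $H$, are simply driven random walks: $e^{(1)}_{n+1}=e^{(1)}_n+\Delta L^{(1)}_n$.

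By Parseval, the energy splits as $\cE^\kappa_n=\sum_{\ell,m}\cE^{\ell,m}_n$, where $\cE^{\ell,m}_n=\frac12(|e^{(2),\ell,m}_n|^2+|h^{\ell,m}_n|^2)$ plus the $\mathbf{\Phi}$-part.

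For $\ell\ge1$, I write the step as $(I-\tau\mathcal A_\ell)x_{n+1}=x_n+\Delta\mathbf L_n$, with $\mathcal A_\ell$ the skew matrix above. Since $\mathcal A_\ell$ is skew-symmetric and $\mathcal A_\ell^2=-\lambda_\ell I$, we get $(I-\tau\mathcal A_\ell)^{\top}(I-\tau\mathcal A_\ell)=(1+\tau^2\lambda_\ell)I$. Hence $|x_{n+1}|^2=(1+\tau^2\lambda_\ell)^{-1}|x_n+\Delta\mathbf L_n|^2$. Taking expectations, independence and zero mean of the increments kill the cross term. This gives
\[
\E[\cE^{\ell,m}_{n+1}]=(1+\tau^2\lambda_\ell)^{-1}\Bigl(\E[\cE^{\ell,m}_n]+\tfrac{\tau}{2}(v^E_{\ell,m}+v^H_{\ell,m})\Bigr),
\]
which is exactly the wave-equation recursion. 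Using $\lambda_\ell\ge2\ge1$ and the geometric sum bound as in Proposition~\ref{prop:energy-bem}, I obtain
\[
\E[\cE^{\ell,m}_n]\le(1+\tau^2)^{-n}\E[\cE^{\ell,m}_0]+\tfrac1{2\tau}(v^E_{\ell,m}+v^H_{\ell,m}).
\]

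For the undamped modes ($\ell=0$ and the $\mathbf{\Phi}$-components) the energy grows exactly linearly: $\E[\cE_n]=\E[\cE_0]+\frac{t_n}{2}v$. With the labelling in the statement, these contributions are the terms $v^E_{0,0}$ and $v^H_{0,0}$. Summing over modes, using $\tr(\cP_\kappa Q\cP_\kappa)=\sum v_{\ell,m}$ and bounding $(1+\tau^2)^{-n}\le1$ gives the stated bound. The strict inequality comes from $(1+\tau^2)^{-n}<1$ for $n\ge1$ together with $\E[\cE^\kappa_0]>0$, as in the wave case (and possibly also from the strictness of the geometric-sum bound).

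For the limit, I divide by $t_n$. The damped modes contribute $O(1)$, so their contribution vanishes in the limit, while the undamped ones give exactly $\frac12(v^E_{0,0}+v^H_{0,0})$. The main obstacle is bookkeeping rather than analysis. First, one has to identify which components are undamped: the divergence part of $E$, which is absent in vacuum, and the $\ell=0$ modes. Second, one has to make sure the indexing of $v^E_{0,0}$ matches this. If the $\mathbf{\Phi}$-noise is nonzero, its linear growth must be either excluded by the divergence-free convention $L^{(1)}\equiv0$ or absorbed. I will adopt the vacuum convention stated before~\eqref{eq:max_modes}.
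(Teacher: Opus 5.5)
Your proposal is correct and follows the paper's route: the paper's proof only says the result follows as in Proposition~\ref{prop:energy-bem} using the identities of Remark~\ref{rmk:surf_op}. Your mode-wise identity $(I-\tau\mathcal A_\ell)^{\top}(I-\tau\mathcal A_\ell)=(1+\tau^2\lambda_\ell)I$, together with the geometric-sum bound, is exactly that computation. Making the vacuum convention $L^{(1)}\equiv 0$ explicit is a useful clarification that the paper leaves implicit, since otherwise the undamped $\mathbf{\Phi}$-modes would contribute an extra linear-in-$t_n$ term that neither the stated bound nor the stated limit accounts for.
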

\begin{appendixproof}[Proof of Proposition~\ref{prop:energy-bem_max}]
    The result follows analogously to the proof of Proposition~\ref{prop:energy-bem} using the identities from Remark~\ref{rmk:surf_op}.
\end{appendixproof}

Looking back at the results in Sections~\ref{sect-swe}~and~\ref{sect-sch}, we expect that the exponential Euler scheme~\eqref{expEM} will have the correct long-time behavior with respect to the energy of the stochastic Maxwell's equations on the sphere~\eqref{eq:sme_surf}.
In this context, this time integrator reads
\begin{align}\label{eqn:SME_trigo}
	\begin{split}
	E_{n+1}^\kappa&=\cos(\tau(\Deltalb^\kappa)^{1/2})E_{n}^\kappa+(\Deltalb^\kappa)^{-1/2}\sin(\tau(\Deltalb^\kappa)^{1/2})\operatorname{curl}_{\IS^2}^\kappa H_{n}^\kappa\\
	&\quad +\cos(\tau(\Deltalb^\kappa)^{1/2})\cP_\kappa\Delta L^E_n+(\Deltalb^\kappa)^{-1/2}\sin(\tau(\Deltalb^\kappa)^{1/2})\operatorname{curl}_{\IS^2}^\kappa \cP_\kappa\Delta L^H_n,\\
	H_{n+1}^\kappa&=-(\Deltalb^\kappa)^{1/2}\sin(\tau(\Deltalb^\kappa)^{1/2})\operatorname{curl}_{\IS^2}^\kappa E_{n}^\kappa+\cos(\tau(\Deltalb^\kappa)^{1/2})H_{n}^\kappa\\
	&\quad -(\Deltalb^\kappa)^{-1/2}\sin(\tau(\Deltalb^\kappa)^{1/2})\operatorname{curl}_{\IS^2}^\kappa \cP_\kappa\Delta L^E_n+\cos(\tau(\Deltalb^\kappa)^{1/2})\cP_\kappa\Delta L^H_n.
	\end{split}
\end{align}
The next proposition states that, indeed, the exponential Euler scheme has the same behavior as the spatially semi-discrete solution, see Proposition~\ref{prop:energy-spectral_max}.
\begin{proposition}\label{prop:energy-trigo_max}
	Consider the stochastic Maxwell's equations on the sphere~\eqref{eq:sme_surf} with initial values $(E(0),H(0))\in \mathcal H_0$.
	Assume that the expected value of the initial energy $\E\left[\mathcal{E}(0)\right]<\infty$ and that the $Q$-L\'evy process $L$
	is trace-class with covariance operators of $L_E$ and $L_H$ denoted by $Q_E$ and $Q_H$, respectively. Let $(E_{n}^\kappa, H_{n}^\kappa)$ denote the fully-discrete approximation given by the stochastic exponential integrator~\eqref{eqn:SME_trigo}.
	This fully-discrete approximation satisfies the trace formula for the energy:
	\begin{align*}
		\E\left[\mathcal{E}^\kappa_n\right]=\E\left[\mathcal{E}^\kappa_0\right]+\frac{t_n}2\left(\tr\left(\cP_\kappa Q_E\cP_\kappa\right)+\tr\left(\cP_\kappa Q_H\cP_\kappa\right)\right),
	\end{align*}
	for every discrete times $t_n=n\tau$ with integers $n\geq0$.
\end{proposition}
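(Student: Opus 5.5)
The plan is to work mode by mode in the vector spherical harmonic basis, mirroring the proof of Proposition~\ref{prop:energy-trigo}. By the identities preceding \eqref{eq:max-fin}, the spectral truncation decouples into $2\times2$ blocks for the pairs $(e^{(2)}_{\ell,m},h_{\ell,m})$ and scalar equations for $e^{(1)}_{\ell,m}$. On each block, the scheme \eqref{eqn:SME_trigo} is exactly
\[
\begin{pmatrix} e^{(2)}_{\ell,m,n+1}\\ h_{\ell,m,n+1}\end{pmatrix}
= e^{\tau\mathcal A_\ell}\left(\begin{pmatrix} e^{(2)}_{\ell,m,n}\\ h_{\ell,m,n}\end{pmatrix}+\begin{pmatrix}\Delta L^{(2)}_{\ell,m,n}\\ \Delta L_{\ell,m,n}\end{pmatrix}\right),
\]
with $e^{\tau\mathcal A_\ell}$ the rotation matrix given above. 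The $\Phi$-component is only translated by its noise increment. Equivalently, globally $X_{n+1}=e^{\tau\mathcal A^\kappa}(X_n+\cP_\kappa\Delta L_n)$, where $e^{\tau\mathcal A^\kappa}$ is unitary on $\cP_\kappa\mathcal H_0$. This unitarity is the content of Remark~\ref{rmk:dblw} combined with the identities $\operatorname{curl}_{\IS^2}=(\Deltalb^H)^{1/2}$ on divergence-free fields and $\operatorname{curl}_{\IS^2}=\Deltalb^{1/2}$ on scalars, as used in the proof of Proposition~\ref{prop:energy-exact_max}.

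Next, I take the expectation of $2\cE^\kappa_{n+1}=\norm{X_{n+1}}^2$. By unitarity,
\[
2\E\left[\cE^\kappa_{n+1}\right]=\E\left[\norm{X_n}^2\right]+2\E\left[\Re\langle X_n,\cP_\kappa\Delta L_n\rangle\right]+\E\left[\norm{\cP_\kappa\Delta L_n}^2\right].
\]
The cross term vanishes because $X_n$ is $\mathcal F_{t_n}$-measurable, the increment $\Delta L_n$ is independent of $\mathcal F_{t_n}$, and $L$ has mean zero.

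For the last term, I write the increment as a stochastic integral over $[t_n,t_{n+1}]$, as in the proof of Proposition~\ref{prop:energy-trigo}. Itô's isometry, or directly the independence of $L_E$ and $L_H$ components with $\E\left[\norm{L(t)}^2\right]=t\tr Q$, then gives $\tau\left(\tr(\cP_\kappa Q_E\cP_\kappa)+\tr(\cP_\kappa Q_H\cP_\kappa)\right)$. Any cross-covariance between $L_E$ and $L_H$ contributes nothing, since $\norm{(a,b)}^2=\norm{a}^2+\norm{b}^2$ in the product space. This yields the one-step relation
\[
\E\left[\cE^\kappa_{n+1}\right]=\E\left[\cE^\kappa_n\right]+\frac{\tau}{2}\left(\tr(\cP_\kappa Q_E\cP_\kappa)+\tr(\cP_\kappa Q_H\cP_\kappa)\right),
\]
and induction on $n$ with $t_n=n\tau$ concludes the proof.

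The main obstacle is justifying that the operator in \eqref{eqn:SME_trigo} really is the unitary $e^{\tau\mathcal A^\kappa}$ applied to $X_n+\cP_\kappa\Delta L_n$. The formula as written mixes $(\Deltalb^\kappa)^{\pm1/2}$ with $\operatorname{curl}^\kappa_{\IS^2}$, and the $\ell=0$ mode and the $\Phi$-components must be handled separately. On $\ell=0$, and on the $\Phi$-directions where $\operatorname{curl}_{\IS^2}\mathbf\Phi_{\ell,m}=0$, the rotation is the identity, so energy simply accumulates the noise variance. I would verify the identity directly on each mode via $\operatorname{curl}_{\IS^2}Y_{\ell,m}=-\sqrt{\lambda_\ell}\mathbf\Psi_{\ell,m}$ and $\operatorname{curl}_{\IS^2}\mathbf\Psi_{\ell,m}=-\sqrt{\lambda_\ell}Y_{\ell,m}$, reducing to the explicit matrix $e^{\tau\mathcal A_\ell}$, whose columns are orthonormal.
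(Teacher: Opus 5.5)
Your proposal is correct and is essentially the paper's argument. The paper inserts the scheme into the energy, uses $\cos^2+\sin^2=I$ together with the curl identities from the proof of Proposition~\ref{prop:energy-exact_max} (this is exactly the unitarity of $e^{\tau\mathcal A^\kappa}$ that you invoke), applies It\^o's isometry to the increments on $[t_n,t_{n+1}]$, and concludes by recursion on $n$, as in Proposition~\ref{prop:energy-trigo}. Writing the step as $X_{n+1}=e^{\tau\mathcal A^\kappa}(X_n+\cP_\kappa\Delta L_n)$ with an orthogonal propagator, and reading \eqref{eqn:SME_trigo} mode by mode as the exponential Euler step \eqref{expEM}, is a cleaner packaging; it also disposes of any $L_E$--$L_H$ cross-covariance and of the curl-free and $\ell=0$ modes, which the paper's term-by-term expansion leaves implicit.
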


\begin{appendixproof}[Proof of Proposition~\ref{prop:energy-trigo_max}]
	The proof follows the main steps of the proof of Proposition~\ref{prop:energy-exact_max}, see also the
	proof of Proposition~\ref{prop:energy-trigo} for the stochastic wave equation on the sphere.
\end{appendixproof}

\subsection{Numerical experiments}\label{max-num}
We conclude this section with numerical experiments in order to support the above theoretical
results on the long-time behavior of the expected energy
of the stochastic Maxwell's equations on the sphere~\eqref{eq:sme_surf}.

We simulate $M=10000$ Monte Carlo samples of the numerical solutions to this SPDE up to final times $T=3$ and $T=100$ using
the stochastic exponential Euler scheme~\eqref{eqn:SME_trigo}, the forward Euler--Maruyama scheme~\eqref{eq:max_EM} and the backward Euler--Maruyama scheme~\eqref{eq:max_BEM}. The number of time steps is taken to be $N=300$.
The truncation parameter for our spatial discretization is taken to be $\kappa = 2^5$.
As initial values, we choose the Gaussian random fields
$$
E_0 = \sum_{\ell=0}^{\kappa} \sum_{m=-\ell}^{\ell} \ell^{-\gamma} u_{\ell , m} Y_{\ell , m}, \quad H_0 = \sum_{\ell=0}^{\kappa} \sum_{m=-\ell}^{\ell} \ell^{-\gamma+1} z_{\ell , m} Y_{\ell , m},
$$
where $\gamma=3+10^{-5}$ (setting $0^{-x}=1$ for $x>0$), and where $u_{\ell , m}$ and $z_{\ell , m}$ are iid standard Gaussian random variables.
For the L\' evy noise $L^E$,
we choose $L^E=\sum_{\ell=0}^{\kappa} \sum_{m=-\ell}^{\ell} A_\ell L^E_{\ell, m} Y_{\ell , m},$ for independent Lévy processes $L^E_{\ell,m}$,
given by $L^E_{\ell, m}(t) = \left(W^E_{\ell, m}(t) + P^E_{\ell, m}(t) - t\right) / \sqrt{2}$
for a standard Brownian motion $W^E_{\ell, m}$ and independent Poisson process $P^E_{\ell, m}$.
The $Q$-L\'evy process $L^H$ is independent of and identically distributed to $L^E$.
This choice implies, by \cite{marinucciRandomFieldsSphere2011a}, that the covariance operator $Q^\kappa$ of the truncated noises $\cP_\kappa L^E$ and $\cP_\kappa L^H$ is the covariance operator of an isotropic random field.
Furthermore, we choose $A_0 = 1$ and $A_\ell = \ell^{-4}$ for $\ell = 1, \dots, \kappa$. 

The results of these simulations are shown in Figure~\ref{fig:SME_energies}. For short and long time intervals, 
the correct behavior of the expected energy in the stochastic exponential Euler method~\eqref{eqn:SME_trigo} can be observed, illustrating Proposition~\ref{prop:energy-trigo_max}.
In contrast, as shown in Propositions~\ref{prop:energy-em_max}~and~\ref{prop:energy-bem_max}, the forward Euler--Maruyama scheme~\eqref{eq:max_EM}, exhibits an exponential increase in energy, while the backward Euler--Maruyama method~\eqref{eq:max_BEM}
shows an energy that grows more slowly than that of the exact solution.

\begin{figure}[h!]
    \centering
    \begin{subfigure}[t]{0.5\textwidth}
        \centering
        \includegraphics[width=\textwidth]{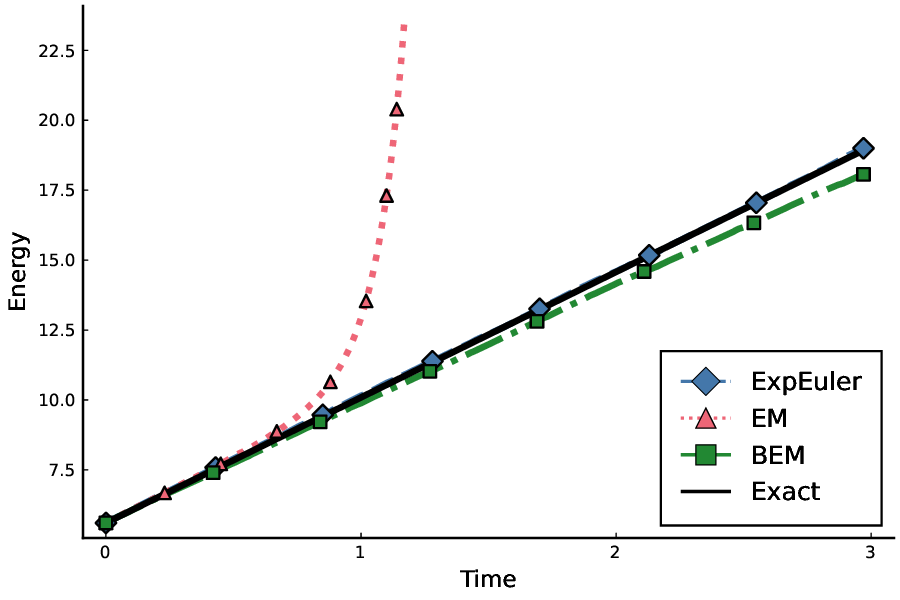}
        \caption{$T=3$}
        \label{fig:SME_energy}
    \end{subfigure}%
    \begin{subfigure}[t]{0.5\textwidth}
        \centering
        \includegraphics[width=\textwidth]{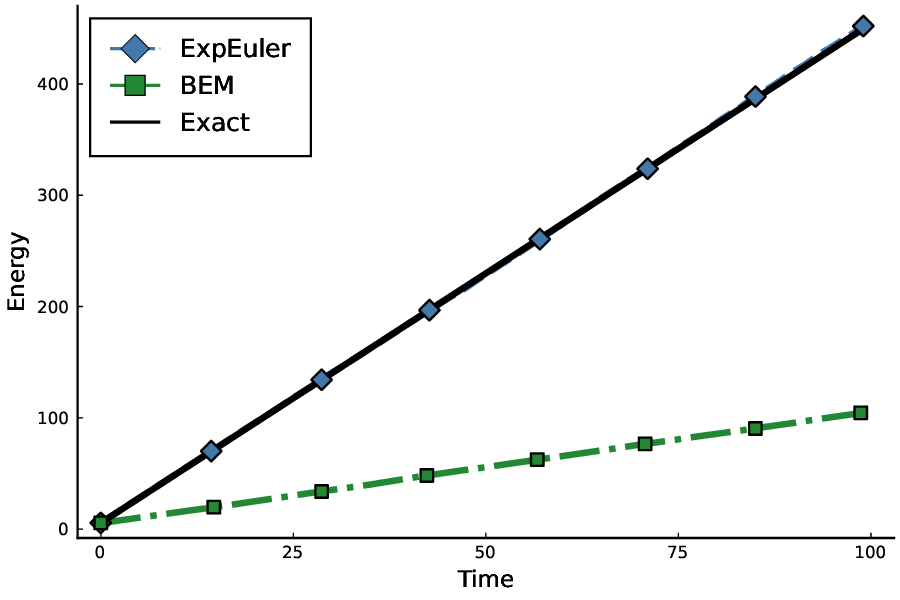}
        \caption{$T=100$}
        \label{fig:SME_energy_longtime}
    \end{subfigure}
    \caption{Expected energy of the stochastic Maxwell's equations on the sphere~\eqref{eq:sme_surf}: Exact solution and stochastic exponential Euler scheme~\eqref{eqn:SME_trigo} ({\upshape ExpEuler})
    the forward Euler--Maruyama scheme~\eqref{eqn:SSE_EM} ({\upshape EM}), and the backward Euler--Maruyama scheme~\eqref{eqn:BEM} ({\upshape BEM}).
    }
    \label{fig:SME_energies}
\end{figure}

\section{Acknowledgements}
The work of DC was partially supported by the Swedish Research Council (VR) (projects nr. $2018-04443$
and $2024-04536$). The work of DC, BM, and AP was partially supported by the European Union (ERC, StochMan, 101088589, PI A. Lang).
Views and opinions expressed are however those of the author(s) only and do not necessarily reflect those
of the European Union or the European Research Council. Neither the European Union nor the granting authority can be held responsible for them.
The computations were performed on resources provided by
the National Academic Infrastructure for Supercomputing in Sweden (NAISS) at Vera, Chalmers e-Commons
at Chalmers University of Technology and partially funded by the Swedish Research Council
through grant agreement no. 2022-06725.

\appendix\label{appdx}

\bibliographystyle{plain}
\bibliography{labib}

@article {MR2333534,
    AUTHOR = {Hong, J. and Scherer, R. and Wang, L.},
     TITLE = {Predictor-corrector methods for a linear stochastic oscillator
              with additive noise},
   JOURNAL = {Math. Comput. Modelling},
  FJOURNAL = {Mathematical and Computer Modelling},
    VOLUME = {46},
      YEAR = {2007},
    NUMBER = {5-6},
     PAGES = {738--764},
      ISSN = {0895-7177},
   MRCLASS = {65L06 (37M15 65C30 65P10 70H05 70L05)},
  MRNUMBER = {2333534},
MRREVIEWER = {Inmaculada\ Higueras},
       DOI = {10.1016/j.mcm.2006.12.009},
       URL = {https://doi.org/10.1016/j.mcm.2006.12.009},
}

@article {MR2370085,
    AUTHOR = {Brze\'zniak, Z. and Ondrej\'at, M.},
     TITLE = {Strong solutions to stochastic wave equations with values in
              {R}iemannian manifolds},
   JOURNAL = {J. Funct. Anal.},
  FJOURNAL = {Journal of Functional Analysis},
    VOLUME = {253},
      YEAR = {2007},
    NUMBER = {2},
     PAGES = {449--481},
      ISSN = {0022-1236,1096-0783},
   MRCLASS = {60H15 (35R60 58J65)},
  MRNUMBER = {2370085},
MRREVIEWER = {Robert\ C.\ Dalang},
       DOI = {10.1016/j.jfa.2007.03.034},
       URL = {https://doi.org/10.1016/j.jfa.2007.03.034},
}

@article {MR4161976,
    AUTHOR = {Brze\'zniak, Z. and Hussain, J.},
     TITLE = {Global solution of nonlinear stochastic heat equation with
              solutions in a {H}ilbert manifold},
   JOURNAL = {Stoch. Dyn.},
  FJOURNAL = {Stochastics and Dynamics},
    VOLUME = {20},
      YEAR = {2020},
    NUMBER = {6},
     PAGES = {2040012, 29},
      ISSN = {0219-4937,1793-6799},
   MRCLASS = {60H15 (35K55 35R60 58J35 58J65 60J60 60J65)},
  MRNUMBER = {4161976},
       DOI = {10.1142/S0219493720400122},
       URL = {https://doi.org/10.1142/S0219493720400122},
}

@article{MR4942808,
    AUTHOR = {Brze\'zniak, Z. and Cerrai, S.},
     TITLE = {Stochastic wave equations with constraints: well-posedness and
              {S}moluchowski-{K}ramers diffusion approximation},
   JOURNAL = {Comm. Math. Phys.},
  FJOURNAL = {Communications in Mathematical Physics},
    VOLUME = {406},
      YEAR = {2025},
    NUMBER = {9},
     PAGES = {Paper No. 223, 59},
      ISSN = {0010-3616,1432-0916},
   MRCLASS = {60H15},
  MRNUMBER = {4942808},
       DOI = {10.1007/s00220-025-05397-0},
       URL = {https://doi.org/10.1007/s00220-025-05397-0},
}

@misc{cohen2026fullydiscreteapproximationsemilinear,
      title={Fully discrete approximation of the semilinear stochastic wave equation on the sphere}, 
      author={D. Cohen and S. Di Giovacchino and A. Lang},
      year={2026},
      eprint={2602.00556},
      archivePrefix={arXiv},
      primaryClass={math.NA},
      url={https://arxiv.org/abs/2602.00556}, 
}

@book{Elworthy_1982, place={Cambridge}, series={London Mathematical Society Lecture Note Series}, title={Stochastic Differential Equations on Manifolds}, publisher={Cambridge University Press}, author={Elworthy, K. D.}, year={1982}, collection={London Mathematical Society Lecture Note Series}}

@article{tangVecField,
	author = {Brandt, C. and Scandolo, L. and Eisemann, E. and Hildebrandt, K.},
	title = {Spectral Processing of Tangential Vector Fields},
	journal = {Computer Graphics Forum},
	volume = {36},
	number = {6},
	pages = {338-353},
	doi = {https://doi.org/10.1111/cgf.12942},
	url = {https://onlinelibrary.wiley.com/doi/abs/10.1111/cgf.12942},
	eprint = {https://onlinelibrary.wiley.com/doi/pdf/10.1111/cgf.12942},
	year = {2017}
}

@article{Stone,
	ISSN = {0003486X, 19398980},
	URL = {http://www.jstor.org/stable/1968538},
	author = {M. H. Stone},
	journal = {Annals of Mathematics},
	number = {3},
	pages = {643--648},
	publisher = {[Annals of Mathematics, Trustees of Princeton University on Behalf of the Annals of Mathematics, Mathematics Department, Princeton University]},
	title = {On {O}ne-{P}arameter {U}nitary {G}roups in {H}ilbert {S}pace},
	urldate = {2026-01-20},
	volume = {33},
	year = {1932}
}

@article{CabralLobo2017,
	author  = {Cabral, F. and Lobo, F. S. N.},
	title   = {Electrodynamics and {S}pacetime {G}eometry: Foundations},
	journal = {Foundations of Physics},
	volume  = {47},
	number  = {2},
	pages   = {208--228},
	year    = {2017},
	doi     = {10.1007/s10701-016-0051-6},
	url     = {https://doi.org/10.1007/s10701-016-0051-6}
}

@book{flanders1963differential,
	title={Differential forms with applications to the physical sciences},
	author={Flanders, H.},
	volume={11},
	year={1963},
	publisher={Courier Corporation}
}

@article{shi70,
	author = {Ohkuro, S.},
	title = {Differential {F}orms and {M}axwell's {F}ield: {A}n {A}pplication of {H}armonic {I}ntegrals},
	journal = {Journal of Mathematical Physics},
	volume = {11},
	number = {6},
	pages = {2005-2012},
	year = {1970},
	month = {06},
	issn = {0022-2488},
	doi = {10.1063/1.1665359},
	url = {https://doi.org/10.1063/1.1665359},
	eprint = {https://pubs.aip.org/aip/jmp/article-pdf/11/6/2005/19200215/2005_1_online.pdf},
}

@Inbook{Milani1988,
	author="Milani, A.
	and Picard, R.",
	editor="Hildebrandt, Stefan
	and Leis, Rolf",
	title="Decomposition theorems and their application to non-linear electro- and magneto-static boundary value problems",
	bookTitle="Partial Differential Equations and Calculus of Variations",
	year="1988",
	publisher="Springer Berlin Heidelberg",
	address="Berlin, Heidelberg",
	pages="317--340",
	isbn="978-3-540-46024-4",
	doi="10.1007/BFb0082873",
	url="https://doi.org/10.1007/BFb0082873"
}

@article{Hiptmair_2002, title={Finite elements in computational electromagnetism}, volume={11}, DOI={10.1017/S0962492902000041}, journal={Acta Numerica}, author={Hiptmair, R.}, year={2002}, pages={237–339}}

@article{WECK1974410,
	title = {Maxwell's boundary value problem on Riemannian manifolds with nonsmooth boundaries},
	journal = {Journal of Mathematical Analysis and Applications},
	volume = {46},
	number = {2},
	pages = {410-437},
	year = {1974},
	issn = {0022-247X},
	doi = {https://doi.org/10.1016/0022-247X(74)90250-9},
	url = {https://www.sciencedirect.com/science/article/pii/0022247X74902509},
	author = {N. Weck}
}

@article{FANG2025855,
	title = {Dispersive estimates for {M}axwell's equations in the exterior of a sphere},
	journal = {Journal of Differential Equations},
	volume = {415},
	pages = {855-885},
	year = {2025},
	issn = {0022-0396},
	doi = {https://doi.org/10.1016/j.jde.2024.10.024},
	url = {https://www.sciencedirect.com/science/article/pii/S002203962400679X},
	author = {Y. Fang and A. Waters}
}

@article{Munshi2022Complex,
	author = {Munshi, S. and Yang, R.},
	title = {Complex solutions to {M}axwell's equations},
	journal = {Complex Analysis and its Synergies},
	volume = {8},
	number = {1},
	year = {2022},
	doi = {10.1007/s40627-022-00091-6},
	publisher = {Springer}
}

@Article{math14010017,
AUTHOR = {Wang, Y. and Lang, Z. and Yin, X. and Zhao, Z.},
TITLE = {Exponentially Fitted Midpoint Scheme for a Stochastic Oscillator},
JOURNAL = {Mathematics},
VOLUME = {14},
YEAR = {2026},
NUMBER = {1},
ARTICLE-NUMBER = {17},
URL = {https://www.mdpi.com/2227-7390/14/1/17},
ISSN = {2227-7390},
DOI = {10.3390/math14010017}
}

@article {MR3702519,
    AUTHOR = {Yin, X. and Zhang, C. and Liu, Y.},
     TITLE = {Exponentially fitted trapezoidal scheme for a stochastic
              oscillator},
   JOURNAL = {J. Comput. Math.},
  FJOURNAL = {Journal of Computational Mathematics},
    VOLUME = {35},
      YEAR = {2017},
    NUMBER = {6},
     PAGES = {801--813},
      ISSN = {0254-9409,1991-7139},
   MRCLASS = {65C30 (34F05 60H10)},
  MRNUMBER = {3702519},
MRREVIEWER = {Latifa\ Debbi},
       DOI = {10.4208/jcm.1612-m2016-0677},
       URL = {https://doi.org/10.4208/jcm.1612-m2016-0677},
}

@article {MR3579605,
    AUTHOR = {Wang, P. and Hong, J. and Xu, D.},
     TITLE = {Construction of symplectic {R}unge-{K}utta methods for
              stochastic {H}amiltonian systems},
   JOURNAL = {Commun. Comput. Phys.},
  FJOURNAL = {Communications in Computational Physics},
    VOLUME = {21},
      YEAR = {2017},
    NUMBER = {1},
     PAGES = {237--270},
      ISSN = {1815-2406,1991-7120},
   MRCLASS = {65P10 (60H35 65C30)},
  MRNUMBER = {3579605},
       DOI = {10.4208/cicp.261014.230616a},
       URL = {https://doi.org/10.4208/cicp.261014.230616a},
}

@article {MR2922170,
    AUTHOR = {Cohen, D.},
     TITLE = {On the numerical discretisation of stochastic oscillators},
   JOURNAL = {Math. Comput. Simulation},
  FJOURNAL = {Mathematics and Computers in Simulation},
    VOLUME = {82},
      YEAR = {2012},
    NUMBER = {8},
     PAGES = {1478--1495},
      ISSN = {0378-4754,1872-7166},
   MRCLASS = {65C30 (34F05 60H10 60H35)},
  MRNUMBER = {2922170},
MRREVIEWER = {Renato\ G. C. Spigler},
       DOI = {10.1016/j.matcom.2012.02.004},
       URL = {https://doi.org/10.1016/j.matcom.2012.02.004},
}

@article {MR3831957,
    AUTHOR = {D'Ambrosio, R. and Moccaldi, M. and Paternoster,
              B.},
     TITLE = {Numerical preservation of long-term dynamics by stochastic
              two-step methods},
   JOURNAL = {Discrete Contin. Dyn. Syst. Ser. B},
  FJOURNAL = {Discrete and Continuous Dynamical Systems. Series B. A Journal
              Bridging Mathematics and Sciences},
    VOLUME = {23},
      YEAR = {2018},
    NUMBER = {7},
     PAGES = {2763--2773},
      ISSN = {1531-3492,1553-524X},
   MRCLASS = {65C30 (34F05)},
  MRNUMBER = {3831957},
MRREVIEWER = {Haziem\ Mohammad\ Hazaimeh},
       DOI = {10.3934/dcdsb.2018105},
       URL = {https://doi.org/10.3934/dcdsb.2018105},
}

@article{akrivisFiniteDifferenceDiscretization1993,
  title = {Finite Difference Discretization of the Cubic {{Schr\"odinger}} Equation},
  author = {Akrivis, G. D.},
  year = 1993,
  month = jan,
  journal = {IMA Journal of Numerical Analysis},
  volume = {13},
  number = {1},
  pages = {115--124},
  issn = {0272-4979},
  doi = {10.1093/imanum/13.1.115},
  urldate = {2025-11-26}
}

@article{antoineComputationalMethodsDynamics2013,
  title = {Computational Methods for the Dynamics of the Nonlinear {{Schr\"odinger}}/{{Gross}}--{{Pitaevskii}} Equations},
  author = {Antoine, X. and Bao, W. and Besse, C.},
  year = 2013,
  month = dec,
  journal = {Computer Physics Communications},
  volume = {184},
  number = {12},
  pages = {2621--2633},
  issn = {0010-4655},
  doi = {10.1016/j.cpc.2013.07.012},
  urldate = {2025-11-26}
}

@article{besseEnergypreservingMethodsNonlinear2021,
  title = {Energy-Preserving Methods for Nonlinear {{Schr\"odinger}} Equations},
  author = {Besse, C. and Descombes, S. and Dujardin, G. and {Lacroix-Violet}, I.},
  year = 2021,
  month = jan,
  journal = {IMA Journal of Numerical Analysis},
  volume = {41},
  number = {1},
  pages = {618--653},
  issn = {0272-4979},
  doi = {10.1093/imanum/drz067},
  urldate = {2025-11-26}
}

@article{besseRelaxationSchemeNonlinear2004,
  title = {A {{Relaxation Scheme}} for the {{Nonlinear Schr\"odinger Equation}}},
  author = {Besse, C.},
  year = 2004,
  month = jan,
  journal = {SIAM Journal on Numerical Analysis},
  volume = {42},
  number = {3},
  pages = {934--952},
  publisher = {{Society for Industrial and Applied Mathematics}},
  issn = {0036-1429},
  doi = {10.1137/S0036142901396521},
  urldate = {2025-11-26}
}

@book{carlesSemiclassicalAnalysisNonlinear2008,
  title = {Semi-Classical {{Analysis For Nonlinear Schr\"odinger Equations}}},
  author = {Carles, R.},
  year = 2008,
  publisher = {World Scientific Publishing Company},
  address = {Singapore},
  urldate = {2025-11-24},
  isbn = {978-981-279-313-3}
}

@article{cohenOnestageExponentialIntegrators2012,
  title = {One-Stage Exponential Integrators for Nonlinear {{Schr\"odinger}} Equations over Long Times},
  author = {Cohen, D. and Gauckler, L.},
  year = 2012,
  month = dec,
  journal = {BIT Numerical Mathematics},
  volume = {52},
  number = {4},
  pages = {877--903},
  issn = {1572-9125},
  doi = {10.1007/s10543-012-0385-1},
  urldate = {2025-11-26},
  langid = {english}
}

@book{lubichQuantumClassicalMolecular2008,
  title = {From {{Quantum}} to {{Classical Molecular Dynamics}}: {{Reduced Models}} and {{Numerical Analysis}}},
  shorttitle = {From {{Quantum}} to {{Classical Molecular Dynamics}}},
  author = {Lubich, C.},
  year = 2008,
  month = sep,
  series = {Zurich {{Lectures}} in {{Advanced Mathematics}}},
  edition = {1},
  volume = {12},
  publisher = {EMS Press},
  doi = {10.4171/067},
  urldate = {2025-11-24},
  isbn = {978-3-03719-067-8 978-3-03719-567-3},
  langid = {english}
}

@book{marinucciRandomFieldsSphere2011a,
  title = {Random {{Fields}} on the {{Sphere}}: {{Representation}}, {{Limit Theorems}} and {{Cosmological Applications}}},
  shorttitle = {Random {{Fields}} on the {{Sphere}}},
  author = {Marinucci, D. and Peccati, G.},
  year = 2011,
  series = {London {{Mathematical Society Lecture Note Series}}},
  publisher = {Cambridge University Press},
  address = {Cambridge},
  doi = {10.1017/CBO9780511751677},
  urldate = {2024-03-15},
  isbn = {978-0-521-17561-6}
}

@article{sanz-sernaConerservativeNonconservativeSchemes1986,
  title = {Conerservative and {{Nonconservative Schemes}} for the {{Solution}} of the {{Nonlinear Schr\"odinger Equation}}},
  author = {{Sanz-Serna}, J. M. and Verwer, J. G.},
  year = 1986,
  month = jan,
  journal = {IMA Journal of Numerical Analysis},
  volume = {6},
  number = {1},
  pages = {25--42},
  issn = {0272-4979},
  doi = {10.1093/imanum/6.1.25},
  urldate = {2025-11-26}
}

@book {MR1214374,
    AUTHOR = {Kloeden, P. E. and Platen, E.},
     TITLE = {Numerical solution of stochastic differential equations},
    SERIES = {Applications of Mathematics (New York)},
    VOLUME = {23},
 PUBLISHER = {Springer-Verlag, Berlin},
      YEAR = {1992},
     PAGES = {xxxvi+632},
      ISBN = {3-540-54062-8},
   MRCLASS = {60H10 (34A50 34F05 65L99 65P05)},
  MRNUMBER = {1214374},
MRREVIEWER = {G. N. Mil\cprime shte\u{\i}n},
       DOI = {10.1007/978-3-662-12616-5},
       URL = {https://doi.org/10.1007/978-3-662-12616-5},
}

@book{szeg1939orthogonal,
	title={Orthogonal polynomials},
	author={Szeg, G.},
	volume={23},
	year={1939},
	publisher={American Mathematical Soc.}
}

@article{Barrera_1985,
	doi = {10.1088/0143-0807/6/4/014},
	url = {https://doi.org/10.1088/0143-0807/6/4/014},
	year = {1985},
	month = {oct},
	publisher = {},
	volume = {6},
	number = {4},
	pages = {287},
	author = {R G Barrera and G A Estevez and J Giraldo},
	title = {Vector spherical harmonics and their application to magnetostatics},
	journal = {European Journal of Physics}
}

@book {MR4369963,
    AUTHOR = {Milstein, G. N. and Tretyakov, M. V.},
     TITLE = {Stochastic numerics for mathematical physics},
    SERIES = {Scientific Computation},
      NOTE = {Second edition [of 2069903]},
 PUBLISHER = {Springer, Cham},
      YEAR = {[2021] \copyright 2021},
     PAGES = {xxv+736},
      ISBN = {978-3-030-82039-8; 978-3-030-82040-4},
   MRCLASS = {60-02 (35R60 37M15 60H10 60H15 60H35 65C30)},
  MRNUMBER = {4369963},
       DOI = {10.1007/978-3-030-82040-4},
       URL = {https://doi.org/10.1007/978-3-030-82040-4},
}

@article {MR3404631,
    AUTHOR = {Lang, A. and Schwab, C.},
     TITLE = {Isotropic {G}aussian random fields on the sphere: regularity,
              fast simulation and stochastic partial differential equations},
   JOURNAL = {Ann. Appl. Probab.},
  FJOURNAL = {The Annals of Applied Probability},
    VOLUME = {25},
      YEAR = {2015},
    NUMBER = {6},
     PAGES = {3047--3094},
      ISSN = {1050-5164,2168-8737},
   MRCLASS = {60G60 (60G15 60G17 60H15 60H35 65N30 65N75)},
  MRNUMBER = {3404631},
MRREVIEWER = {Peter\ E.\ Kloeden},
       DOI = {10.1214/14-AAP1067},
       URL = {https://doi.org/10.1214/14-AAP1067},
}

@book {MR2597943,
    AUTHOR = {Evans, L. C.},
     TITLE = {Partial differential equations},
    SERIES = {Graduate Studies in Mathematics},
    VOLUME = {19},
   EDITION = {Second},
 PUBLISHER = {American Mathematical Society, Providence, RI},
      YEAR = {2010},
     PAGES = {xxii+749},
      ISBN = {978-0-8218-4974-3},
   MRCLASS = {35-01},
  MRNUMBER = {2597943},
MRREVIEWER = {Diego M. Maldonado},
       DOI = {10.1090/gsm/019},
       URL = {https://doi.org/10.1090/gsm/019},
}

@article {MR1246744,
    AUTHOR = {Gy\"{o}ngy, I.},
     TITLE = {Stochastic partial differential equations on manifolds. {I}},
   JOURNAL = {Potential Anal.},
  FJOURNAL = {Potential Analysis. An International Journal Devoted to the
              Interactions between Potential Theory, Probability Theory,
              Geometry and Functional Analysis},
    VOLUME = {2},
      YEAR = {1993},
    NUMBER = {2},
     PAGES = {101--113},
      ISSN = {0926-2601},
   MRCLASS = {60H15 (58G32)},
  MRNUMBER = {1246744},
MRREVIEWER = {Ya. \={I}. B\={\i}lopol\cprime s\cprime ka},
       DOI = {10.1007/BF01049295},
       URL = {https://doi.org/10.1007/BF01049295},
}

@article {MR4059183,
    AUTHOR = {Su, W.},
     TITLE = {On the peaks of a stochastic heat equation on a sphere with a
              large radius},
   JOURNAL = {Electron. J. Probab.},
  FJOURNAL = {Electronic Journal of Probability},
    VOLUME = {25},
      YEAR = {2020},
     PAGES = {Paper No. 5, 38},
   MRCLASS = {60H15 (35R60 60G15)},
  MRNUMBER = {4059183},
MRREVIEWER = {G. V. Riabov},
       DOI = {10.1214/20-ejp415},
       URL = {https://doi.org/10.1214/20-ejp415},
}

@Article{Anh2018,
author={Anh, V. V.
and Broadbridge, P.
and Olenko, A.
and Wang, Y. G.},
title={On approximation for fractional stochastic partial differential equations on the sphere},
journal={Stochastic Environmental Research and Risk Assessment},
year={2018},
month={Sep},
day={01},
volume={32},
number={9},
pages={2585-2603},
issn={1436-3259},
doi={10.1007/s00477-018-1517-1},
url={https://doi.org/10.1007/s00477-018-1517-1}
}

@inProceedings{Gia2019,
author = "Le Gia, Q. T.
 and Peach, J.",
title = "A spectral method to the stochastic {S}tokes equations on the sphere",
series = "ANZIAM J.",
volume = "60",
pages = "C52--C64",
year = 2019,
booktitle = "Proceedings of the 18th Biennial Computational Techniques and Applications Conference , CTAC-2018",
editor = "Lamichhane, B. and Tran, T. and Bunder, J.",
month = jun,
doi = "10.21914/anziamj.v60i0.13987",
subjclass = "",
}

@misc{janssonNonstationaryGaussianRandom2024,
  title = {Non-Stationary {{Gaussian}} Random Fields on Hypersurfaces: Sampling and Strong Error Analysis},
  shorttitle = {Non-Stationary {{Gaussian}} Random Fields on Hypersurfaces},
  author = {Jansson, E. and Lang, A. and Pereira, M.},
  year = 2024,
  number = {arXiv:2406.08185},
  eprint = {2406.08185},
  primaryclass = {math},
  publisher = {arXiv},
  doi = {10.48550/arXiv.2406.08185},
  archiveprefix = {arXiv}
}

@article {MR3907363,
    AUTHOR = {Kazashi, Y. and Le Gia, Q. T.},
     TITLE = {A non-uniform discretization of stochastic heat equations with
              multiplicative noise on the unit sphere},
   JOURNAL = {J. Complexity},
  FJOURNAL = {Journal of Complexity},
    VOLUME = {50},
      YEAR = {2019},
     PAGES = {43--65},
      ISSN = {0885-064X},
   MRCLASS = {65M70 (35R60 60H15 65M75 86A10)},
  MRNUMBER = {3907363},
MRREVIEWER = {Sanda Micula},
       DOI = {10.1016/j.jco.2018.09.001},
       URL = {https://doi.org/10.1016/j.jco.2018.09.001},
}

@book{leeIntroductionRiemannianManifolds2018,
  title = {Introduction to {{Riemannian Manifolds}}},
  author = {Lee, J. M.},
  year = 2018,
  series = {Graduate {{Texts}} in {{Mathematics}}},
  volume = {176},
  publisher = {Springer International Publishing},
  address = {Cham},
  doi = {10.1007/978-3-319-91755-9},
  urldate = {2025-05-15},
  copyright = {http://www.springer.com/tdm},
  isbn = {978-3-319-91754-2 978-3-319-91755-9}
}

@article {MR4714764,
    AUTHOR = {Alodat, T. and Le Gia, Q. T. and Sloan, I. H.},
     TITLE = {On approximation for time-fractional stochastic diffusion
              equations on the unit sphere},
   JOURNAL = {J. Comput. Appl. Math.},
  FJOURNAL = {Journal of Computational and Applied Mathematics},
    VOLUME = {446},
      YEAR = {2024},
     PAGES = {Paper No. 115863, 33},
      ISSN = {0377-0427},
   MRCLASS = {65M70 (65M75)},
  MRNUMBER = {4714764},
       DOI = {10.1016/j.cam.2024.115863},
       URL = {https://doi.org/10.1016/j.cam.2024.115863},
}

@article {MR4701212,
    AUTHOR = {Lang, A. and Motschan-Armen, I.},
     TITLE = {Euler-{M}aruyama approximations of the stochastic heat
              equation on the sphere},
   JOURNAL = {J. Comput. Dyn.},
  FJOURNAL = {Journal of Computational Dynamics},
    VOLUME = {11},
      YEAR = {2024},
    NUMBER = {1},
     PAGES = {23--42},
      ISSN = {2158-2491},
   MRCLASS = {60H35 (33C55 35K05 35R60 60H15 65C30)},
  MRNUMBER = {4701212},
       DOI = {10.3934/jcd.2023012},
       URL = {https://doi.org/10.3934/jcd.2023012},
}

@article {MR4935789,
    AUTHOR = {Lang, A. and M\"{u}ller, B.},
     TITLE = {Isotropic {$Q$}-fractional {B}rownian motion on the sphere:
              regularity and fast simulation},
   JOURNAL = {Philos. Trans. Roy. Soc. A},
  FJOURNAL = {Philosophical Transactions of the Royal Society A.
              Mathematical, Physical and Engineering Sciences},
    VOLUME = {383},
      YEAR = {2025},
    NUMBER = {2298},
     PAGES = {Paper No. 20240238, 13},
      ISSN = {1364-503X},
   MRCLASS = {60G22 (35B65 65C35)},
  MRNUMBER = {4935789},
}

@misc{papi2025,
      title={Approximation of the {L}\'evy-driven stochastic heat equation on the sphere},
      author={Lang, A. and Papini, A. and Schwarz, V.},
      year={2025},
      eprint={2507.05005},
      archivePrefix={arXiv},
      primaryClass={math.PR},
      url={https://arxiv.org/abs/2507.05005},
}

@article {MR3923632,
    AUTHOR = {Malzoumati-Khiaban, M. and Foroush Bastani, A. and Yaghouti,
              M. R.},
     TITLE = {Long-term adaptive symplectic numerical integration of linear
              stochastic oscillators driven by additive white noise},
   JOURNAL = {Numer. Algorithms},
  FJOURNAL = {Numerical Algorithms},
    VOLUME = {80},
      YEAR = {2019},
    NUMBER = {3},
     PAGES = {1059--1095},
      ISSN = {1017-1398},
   MRCLASS = {65C30 (60H35)},
  MRNUMBER = {3923632},
MRREVIEWER = {Haziem Mohammad Hazaimeh},
       DOI = {10.1007/s11075-018-0517-z},
       URL = {https://doi.org/10.1007/s11075-018-0517-z},
}

@article {MR3906838,
    AUTHOR = {Senosiain, M. J. and Tocino, A.},
     TITLE = {On the numerical integration of the undamped harmonic
              oscillator driven by independent additive gaussian white
              noises},
   JOURNAL = {Appl. Numer. Math.},
  FJOURNAL = {Applied Numerical Mathematics. An IMACS Journal},
    VOLUME = {137},
      YEAR = {2019},
     PAGES = {49--61},
      ISSN = {0168-9274},
   MRCLASS = {65C30},
  MRNUMBER = {3906838},
MRREVIEWER = {Peter E. Kloeden},
       DOI = {10.1016/j.apnum.2018.12.001},
       URL = {https://doi.org/10.1016/j.apnum.2018.12.001},
}

@article {MR3608313,
    AUTHOR = {de la Cruz, H. and Jimenez, J. C. and Zubelli, J. P.},
     TITLE = {Locally linearized methods for the simulation of stochastic
              oscillators driven by random forces},
   JOURNAL = {BIT},
  FJOURNAL = {BIT. Numerical Mathematics},
    VOLUME = {57},
      YEAR = {2017},
    NUMBER = {1},
     PAGES = {123--151},
      ISSN = {0006-3835},
   MRCLASS = {65C30 (34F05 60H10 60H35)},
  MRNUMBER = {3608313},
MRREVIEWER = {Haziem Mohammad Hazaimeh},
       DOI = {10.1007/s10543-016-0620-2},
       URL = {https://doi.org/10.1007/s10543-016-0620-2},
}

@article {MR3348201,
    AUTHOR = {Senosiain, M. J. and Tocino, A.},
     TITLE = {A review on numerical schemes for solving a linear stochastic
              oscillator},
   JOURNAL = {BIT},
  FJOURNAL = {BIT. Numerical Mathematics},
    VOLUME = {55},
      YEAR = {2015},
    NUMBER = {2},
     PAGES = {515--529},
      ISSN = {0006-3835},
   MRCLASS = {65C30 (60H10 60H35)},
  MRNUMBER = {3348201},
MRREVIEWER = {Mikhail V. Tretyakov},
       DOI = {10.1007/s10543-014-0507-z},
       URL = {https://doi.org/10.1007/s10543-014-0507-z},
}

@article {MR3172331,
    AUTHOR = {Burrage, P. M. and Burrage, K.},
     TITLE = {Structure-preserving {R}unge-{K}utta methods for stochastic
              {H}amiltonian equations with additive noise},
   JOURNAL = {Numer. Algorithms},
  FJOURNAL = {Numerical Algorithms},
    VOLUME = {65},
      YEAR = {2014},
    NUMBER = {3},
     PAGES = {519--532},
      ISSN = {1017-1398},
   MRCLASS = {65C30 (60H10 65P10)},
  MRNUMBER = {3172331},
MRREVIEWER = {Mikhail V. Tretyakov},
       DOI = {10.1007/s11075-013-9796-6},
       URL = {https://doi.org/10.1007/s11075-013-9796-6},
}

@article {MR2926251,
    AUTHOR = {Burrage, K. and Burrage, P. M.},
     TITLE = {Low rank {R}unge-{K}utta methods, symplecticity and stochastic
              {H}amiltonian problems with additive noise},
   JOURNAL = {J. Comput. Appl. Math.},
  FJOURNAL = {Journal of Computational and Applied Mathematics},
    VOLUME = {236},
      YEAR = {2012},
    NUMBER = {16},
     PAGES = {3920--3930},
      ISSN = {0377-0427},
   MRCLASS = {65L06 (65C30 65P10)},
  MRNUMBER = {2926251},
MRREVIEWER = {Alexander Ostermann},
       DOI = {10.1016/j.cam.2012.03.007},
       URL = {https://doi.org/10.1016/j.cam.2012.03.007},
}

@article {MR2909913,
    AUTHOR = {Cohen, D. and Sigg, M.},
     TITLE = {Convergence analysis of trigonometric methods for stiff
              second-order stochastic differential equations},
   JOURNAL = {Numer. Math.},
  FJOURNAL = {Numerische Mathematik},
    VOLUME = {121},
      YEAR = {2012},
    NUMBER = {1},
     PAGES = {1--29},
      ISSN = {0029-599X},
   MRCLASS = {65C30 (34F05 60H10 65L04)},
  MRNUMBER = {2909913},
MRREVIEWER = {Roger Pettersson},
       DOI = {10.1007/s00211-011-0426-8},
       URL = {https://doi.org/10.1007/s00211-011-0426-8},
}

@article {MR2312503,
    AUTHOR = {Tocino, A.},
     TITLE = {On preserving long-time features of a linear stochastic
              oscillator},
   JOURNAL = {BIT},
  FJOURNAL = {BIT. Numerical Mathematics},
    VOLUME = {47},
      YEAR = {2007},
    NUMBER = {1},
     PAGES = {189--196},
      ISSN = {0006-3835},
   MRCLASS = {60H10 (34F05 65C30)},
  MRNUMBER = {2312503},
MRREVIEWER = {G. N. Mil\cprime shte\u{\i}n},
       DOI = {10.1007/s10543-007-0115-2},
       URL = {https://doi.org/10.1007/s10543-007-0115-2},
}

@article {MR4375496,
    AUTHOR = {Cohen, D. and Vilmart, G.},
     TITLE = {Drift-preserving numerical integrators for stochastic
              {P}oisson systems},
   JOURNAL = {Int. J. Comput. Math.},
  FJOURNAL = {International Journal of Computer Mathematics},
    VOLUME = {99},
      YEAR = {2022},
    NUMBER = {1},
     PAGES = {4--20},
      ISSN = {0020-7160},
   MRCLASS = {65C30 (60H10 60H35 65P10)},
  MRNUMBER = {4375496},
       DOI = {10.1080/00207160.2021.1922679},
       URL = {https://doi.org/10.1080/00207160.2021.1922679},
}

@article {MR4285774,
    AUTHOR = {Banjai, L. and Lord, G. and Molla, J.},
     TITLE = {Strong convergence of a {V}erlet integrator for the semilinear
              stochastic wave equation},
   JOURNAL = {SIAM J. Numer. Anal.},
  FJOURNAL = {SIAM Journal on Numerical Analysis},
    VOLUME = {59},
      YEAR = {2021},
    NUMBER = {4},
     PAGES = {1976--2003},
      ISSN = {0036-1429},
   MRCLASS = {65M60 (60H15 60H35 65C20 65M12 65M75)},
  MRNUMBER = {4285774},
MRREVIEWER = {Peter E. Kloeden},
       DOI = {10.1137/20M1364746},
       URL = {https://doi.org/10.1137/20M1364746},
}

@article {MR3012932,
    AUTHOR = {Schurz, H.},
     TITLE = {Stochastic wave equations with cubic nonlinearity and
              {Q}-regular additive noise in {$\Bbb R^2$}},
   JOURNAL = {Discrete Contin. Dyn. Syst.},
  FJOURNAL = {Discrete and Continuous Dynamical Systems. Series A},
      YEAR = {2011},
    NUMBER = {Dynamical systems, differential equations and applications.
              8th AIMS Conference. Suppl. Vol. II},
     PAGES = {1299--1308},
      ISSN = {1078-0947},
      ISBN = {978-1-60133-008-6; 1-60133-008-1},
   MRCLASS = {60H15 (37L55)},
  MRNUMBER = {3012932},
MRREVIEWER = {Mar\'{\i}a J. Garrido-Atienza},
}

@article {MR2384109,
    AUTHOR = {Schurz, H.},
     TITLE = {A numerical method for nonlinear stochastic wave equations in
              {$\Bbb R^1$}},
   JOURNAL = {Dyn. Contin. Discrete Impuls. Syst. Ser. A Math. Anal.},
  FJOURNAL = {Dynamics of Continuous, Discrete \& Impulsive Systems. Series
              A. Mathematical Analysis},
    VOLUME = {14},
      YEAR = {2007},
    NUMBER = {Advances in Dynamical Systems, suppl. S2},
     PAGES = {74--78},
      ISSN = {1201-3390},
   MRCLASS = {65C30 (34F05 35R60 60H15 60H35)},
  MRNUMBER = {2384109},
MRREVIEWER = {Dror Givon},
}

@article {MR2379913,
    AUTHOR = {Schurz, H.},
     TITLE = {Analysis and discretization of semi-linear stochastic wave
              equations with cubic nonlinearity and additive space-time
              noise},
   JOURNAL = {Discrete Contin. Dyn. Syst. Ser. S},
  FJOURNAL = {Discrete and Continuous Dynamical Systems. Series S},
    VOLUME = {1},
      YEAR = {2008},
    NUMBER = {2},
     PAGES = {353--363},
      ISSN = {1937-1632},
   MRCLASS = {60H15 (35L70 35R60 65C30)},
  MRNUMBER = {2379913},
MRREVIEWER = {Hisao Watanabe},
       DOI = {10.3934/dcdss.2008.1.353},
       URL = {https://doi.org/10.3934/dcdss.2008.1.353},
}

@article {MR3007549,
    AUTHOR = {Jiang, S. and Wang, L. and Hong, J.},
     TITLE = {Stochastic multi-symplectic integrator for stochastic
              nonlinear {S}chr\"{o}dinger equation},
   JOURNAL = {Commun. Comput. Phys.},
  FJOURNAL = {Communications in Computational Physics},
    VOLUME = {14},
      YEAR = {2013},
    NUMBER = {2},
     PAGES = {393--411},
      ISSN = {1815-2406},
   MRCLASS = {60H15 (65M75 65P10)},
  MRNUMBER = {3007549},
       DOI = {10.4208/cicp.230212.240812a},
       URL = {https://doi.org/10.4208/cicp.230212.240812a},
}

@article {MR3432362,
    AUTHOR = {Chen, C. and Hong, J. and Zhang, L.},
     TITLE = {Preservation of physical properties of stochastic {M}axwell
              equations with additive noise via stochastic multi-symplectic
              methods},
   JOURNAL = {J. Comput. Phys.},
  FJOURNAL = {Journal of Computational Physics},
    VOLUME = {306},
      YEAR = {2016},
     PAGES = {500--519},
      ISSN = {0021-9991},
   MRCLASS = {65P10 (35Q61 35R60 37H10 65M75 78A25)},
  MRNUMBER = {3432362},
MRREVIEWER = {\"{O}m\"{u}r Umut},
       DOI = {10.1016/j.jcp.2015.11.052},
       URL = {https://doi.org/10.1016/j.jcp.2015.11.052},
}

@article {MR3484400,
    AUTHOR = {Anton, R. and Cohen, D. and Larsson, S. and Wang,
              X.},
     TITLE = {Full discretization of semilinear stochastic wave equations
              driven by multiplicative noise},
   JOURNAL = {SIAM J. Numer. Anal.},
  FJOURNAL = {SIAM Journal on Numerical Analysis},
    VOLUME = {54},
      YEAR = {2016},
    NUMBER = {2},
     PAGES = {1093--1119},
      ISSN = {0036-1429},
   MRCLASS = {65M75 (35R60 60H15 60H35 65M60)},
  MRNUMBER = {3484400},
MRREVIEWER = {Sever A. Hirstoaga},
       DOI = {10.1137/15M101049X},
       URL = {https://doi.org/10.1137/15M101049X},
}

@article {MR4077238,
    AUTHOR = {Chen, C. and Cohen, D. and D'Ambrosio, R. and
              Lang, A.},
     TITLE = {Drift-preserving numerical integrators for stochastic
              {H}amiltonian systems},
   JOURNAL = {Adv. Comput. Math.},
  FJOURNAL = {Advances in Computational Mathematics},
    VOLUME = {46},
      YEAR = {2020},
    NUMBER = {2},
     PAGES = {Paper No. 27, 22},
      ISSN = {1019-7168},
   MRCLASS = {65C30 (60H10 60H35 65C20)},
  MRNUMBER = {4077238},
MRREVIEWER = {Marco P. Cabral},
       DOI = {10.1007/s10444-020-09771-5},
       URL = {https://doi.org/10.1007/s10444-020-09771-5},
}

@article {MR4410964,
    AUTHOR = {Sun, J. and Shu, C. and Xing, Y.},
     TITLE = {Multi-symplectic discontinuous {G}alerkin methods for the
              stochastic {M}axwell equations with additive noise},
   JOURNAL = {J. Comput. Phys.},
  FJOURNAL = {Journal of Computational Physics},
    VOLUME = {461},
      YEAR = {2022},
     PAGES = {Paper No. 111199, 30},
      ISSN = {0021-9991},
   MRCLASS = {65M60 (60H35 65M15 65M75 78A25)},
  MRNUMBER = {4410964},
       DOI = {10.1016/j.jcp.2022.111199},
       URL = {https://doi.org/10.1016/j.jcp.2022.111199},
}

@ARTICLE{1138693,
	author={K. Yee},
	journal={IEEE Transactions on Antennas and Propagation}, 
	title={Numerical solution of initial boundary value problems involving {M}axwell's equations in isotropic media},
	year={1966},
	volume={14},
	number={3},
	pages={302-307},
	doi={10.1109/TAP.1966.1138693}}

@article {MR4739347,
    AUTHOR = {Zhou, Y. and Liang, D.},
     TITLE = {Modeling and {FDTD} discretization of stochastic {M}axwell's
              equations with {D}rude dispersion},
   JOURNAL = {J. Comput. Phys.},
  FJOURNAL = {Journal of Computational Physics},
    VOLUME = {509},
      YEAR = {2024},
     PAGES = {Paper No. 113033, 29},
      ISSN = {0021-9991},
   MRCLASS = {65M06 (78A99)},
  MRNUMBER = {4739347},
       DOI = {10.1016/j.jcp.2024.113033},
       URL = {https://doi.org/10.1016/j.jcp.2024.113033},
}

@article {MR4447703,
    AUTHOR = {Chun, S. and Marcon, J. and Peir\'{o}, J. and Sherwin,
              S. J.},
     TITLE = {Reducing errors caused by geometrical inaccuracy to solve
              partial differential equations with moving frames on
              curvilinear domain},
   JOURNAL = {Comput. Methods Appl. Mech. Engrg.},
  FJOURNAL = {Computer Methods in Applied Mechanics and Engineering},
    VOLUME = {398},
      YEAR = {2022},
     PAGES = {Paper No. 115261, 24},
      ISSN = {0045-7825},
   MRCLASS = {65M50 (58E15)},
  MRNUMBER = {4447703},
       DOI = {10.1016/j.cma.2022.115261},
       URL = {https://doi.org/10.1016/j.cma.2022.115261},
}

@article {MR3635830,
    AUTHOR = {Chun, S.},
     TITLE = {Method of moving frames to solve time-dependent {M}axwell's
              equations on anisotropic curved surfaces: {A}pplications to
              invisible cloak and {ELF} propagation},
   JOURNAL = {J. Comput. Phys.},
  FJOURNAL = {Journal of Computational Physics},
    VOLUME = {340},
      YEAR = {2017},
     PAGES = {85--104},
      ISSN = {0021-9991},
   MRCLASS = {78A25 (65M60)},
  MRNUMBER = {3635830},
       DOI = {10.1016/j.jcp.2017.03.031},
       URL = {https://doi.org/10.1016/j.jcp.2017.03.031},
}

@article {MR4077824,
    AUTHOR = {Cohen, D. and Cui, J. and Hong, J. and Sun, L.},
     TITLE = {Exponential integrators for stochastic {M}axwell's equations
              driven by {I}t\^{o} noise},
   JOURNAL = {J. Comput. Phys.},
  FJOURNAL = {Journal of Computational Physics},
    VOLUME = {410},
      YEAR = {2020},
     PAGES = {109382, 21},
      ISSN = {0021-9991},
   MRCLASS = {65M75 (35Q60 35R60 60H15 78A25)},
  MRNUMBER = {4077824},
MRREVIEWER = {Dirk Bl\"{o}mker},
       DOI = {10.1016/j.jcp.2020.109382},
       URL = {https://doi.org/10.1016/j.jcp.2020.109382},
}

@article {MR3771721,
    AUTHOR = {Anton, R. and Cohen, D.},
     TITLE = {Exponential integrators for stochastic {S}chr\"{o}dinger equations
              driven by {I}t\^{o} noise},
   JOURNAL = {J. Comput. Math.},
  FJOURNAL = {Journal of Computational Mathematics},
    VOLUME = {36},
      YEAR = {2018},
    NUMBER = {2},
     PAGES = {276--309},
      ISSN = {0254-9409},
   MRCLASS = {65N75 (35Q55 35R60 60H15 65C50)},
  MRNUMBER = {3771721},
       DOI = {10.4208/jcm.1701-m2016-0525},
       URL = {https://doi.org/10.4208/jcm.1701-m2016-0525},
}

@article {MR2083326,
    AUTHOR = {Str{\o}mmen Melb{\o} , A. H. and Higham, D. J.},
     TITLE = {Numerical simulation of a linear stochastic oscillator with
              additive noise},
   JOURNAL = {Appl. Numer. Math.},
  FJOURNAL = {Applied Numerical Mathematics. An IMACS Journal},
    VOLUME = {51},
      YEAR = {2004},
    NUMBER = {1},
     PAGES = {89--99},
      ISSN = {0168-9274},
   MRCLASS = {65C30 (34F05 60H10)},
  MRNUMBER = {2083326},
       DOI = {10.1016/j.apnum.2004.02.003},
       URL = {https://doi.org/10.1016/j.apnum.2004.02.003},
}

@article {MR3033008,
    AUTHOR = {Cohen, D. and Larsson, S. and Sigg, M.},
     TITLE = {A trigonometric method for the linear stochastic wave
              equation},
   JOURNAL = {SIAM J. Numer. Anal.},
  FJOURNAL = {SIAM Journal on Numerical Analysis},
    VOLUME = {51},
      YEAR = {2013},
    NUMBER = {1},
     PAGES = {204--222},
      ISSN = {0036-1429},
   MRCLASS = {65M75 (60H15 60H35 65C30 65M60)},
  MRNUMBER = {3033008},
MRREVIEWER = {Roger Pettersson},
       DOI = {10.1137/12087030X},
       URL = {https://doi.org/10.1137/12087030X},
}

@article {MR4462619,
    AUTHOR = {Cohen, D. and Lang, A.},
     TITLE = {Numerical approximation and simulation of the stochastic wave
              equation on the sphere},
   JOURNAL = {Calcolo},
  FJOURNAL = {Calcolo. A Quarterly on Numerical Analysis and Theory of
              Computation},
    VOLUME = {59},
      YEAR = {2022},
    NUMBER = {3},
     PAGES = {Paper No. 32, 32},
      ISSN = {0008-0624},
   MRCLASS = {65C30 (33C55 41A25 60H15 60H35)},
  MRNUMBER = {4462619},
MRREVIEWER = {Xinjie Dai},
       DOI = {10.1007/s10092-022-00472-7},
       URL = {https://doi.org/10.1007/s10092-022-00472-7},
}

@book{peszat2007,
	title={Stochastic Partial Differential Equations with {L}\'evy noise: An Evolution Equation Approach},
	author={Peszat, S. and Zabczyk, J.},
	year={2007},
	publisher={Cambridge University Press}
}

@article{AnMan,
	title = {Analysis of the {L}aplacian on the complete {R}iemannian manifold},
	author ={Strichartz, R. S.},
	journal = {Journal of Functional Analysis},
	year = {1983},
	publisher={Academic Press Inc.}
}

@book{byerly1893elementary,
	title={An Elementary Treatise on Fourier's Series and Spherical, Cylindrical, and Ellipsoidal Harmonics: With Applications to Problems in Mathematical Physics},
	author={Byerly, W. E.},
	year={1893},
	publisher={Ginn}
}

@book{Lord_Powell_Shardlow_2014,
AUTHOR = {Lord, G. J. and Powell, C. E. and Shardlow, T.},
     TITLE = {An introduction to computational stochastic {PDE}s},
    SERIES = {Cambridge Texts in Applied Mathematics},
 PUBLISHER = {Cambridge University Press, New York},
      YEAR = {2014},
     PAGES = {xii+503},
      ISBN = {978-0-521-72852-2},
   MRCLASS = {60-01 (35R60 60H15 60H35 65-01 65M75 65N75)},
  MRNUMBER = {3308418},
MRREVIEWER = {Roger Pettersson},
       DOI = {10.1017/CBO9781139017329},
       URL = {https://doi.org/10.1017/CBO9781139017329},
}

@book{weidmann1980linear,
	title={Linear Operators in Hilbert Spaces},
	author={Weidmann, J.},
	year={1980},
	publisher={Springer-Verlag},
	address={New York},
	volume={68},
	series={Graduate Texts in Mathematics},
	isbn={978-1-4612-6029-5},
	doi={10.1007/978-1-4612-6027-1}
}

@book{monk2003finiteElement,
	author = {Monk, Peter},
	title = {Finite Element Methods for Maxwell's Equations},
	publisher = {Oxford University Press},
	year = {2003},
	month = {04},
	isbn = {9780198508885},
	doi = {10.1093/acprof:oso/9780198508885.001.0001},
	url = {https://doi.org/10.1093/acprof:oso/9780198508885.001.0001},
}

\end{document}